\let\oldmarginpar\marginpar
\renewcommand\marginpar[1]{\-\oldmarginpar[\raggedleft\footnotesize #1]
{\raggedright\footnotesize #1}}
\numberwithin{equation}{section}
\newtheorem{theorem}{Theorem}[section]
\newtheorem{proposition}[theorem]{Proposition}
\newtheorem{corollary}[theorem]{Corollary}
\newtheorem{lemma}[theorem]{Lemma}
\theoremstyle{remark}
\newtheorem{remark}[theorem]{Remark}
\newtheorem{example}[theorem]{Example}
\theoremstyle{definition}
\newtheorem{definition}[theorem]{Definition}
\def\bes{\begin{eqnarray*}}
\def\ees{\end{eqnarray*}}
\def\bm{{\bm a}}
\def\m{{\mathfrak m}}
\def\F{{\rm F}}
 \DeclareMathOperator{\Hom}{Hom}
\DeclareMathOperator{\Tr}{Tr}
\def\V{\mathbb{V}}
\def\Q{\rm{O}}
\def\C{\mathbb{C}}
\def\M{{\rm{M}}}
\def\calR{{\mathcal{R}}}
\def\calE{{\mathcal{E}}}
\def\calF{{\mathcal{F}}}
\def\calH{\mathcal{H}}
\def\rp{{\rm p}}
\def\K{{\rm K}}
\def\H{{\rm H}}
\def\bm
\def\1{{\bf 1}}
\def\k{\mathbf{k}}
\def\I{{\rm I}}
\def\N{\mathbb{Z}_{\geq 0}}
\def\F{\mathbb{F}}
\def\Q{\mathbb{Q}}
\def\D{{\rm D}}
\def\T{{\rm{T}}}
\def\Z{\mathbb{Z}}
\def\gl{{\mathfrak g\mathfrak l}}
\newcommand{\nc}{\newcommand}
\nc{\bM}{{\mathbb M}}
\nc{\Sp}{{\rm Sp}}
\def\rep{{\rm Rep}}
\def\End{{\rm End}}
\def\k{{\rm k}}
\nc{\R}{{\rm R}} 
\def\Alg{{\bf Alg}}
\def\Gr{{\bf Gr}}
\nc{\Rt}{{\rm R}_2}
\nc{\op}[1]{\mathop{\mathchoice{\mbox{\rm #1}}{\mbox{\rm #1}}
{\mbox{\rm \scriptsize #1}}{\mbox{\rm \tiny #1}}}\nolimits}
\nc{\al}{\alpha}
\nc{\ep}{\varepsilon} \nc{\ga}{\gamma} \nc{\Ga}{Q}
\nc{\la}{\lambda} \nc{\La}{\Lambda} \nc{\si}{\sigma}
\nc{\Sig}{{Q}} \nc{\Om}{\Omega} \nc{\om}{\omega}
\nc{\coker}{{\rm coker}}
\nc{\SL}{{\rm SL}} \nc{\GL}{{\rm GL}} \nc{\PGL}{{\rm PGL}}
\nc{\G}{{\rm G}}
\nc{\bV}{{\mathbb V}}
\nc{\g}{{\mathfrak g}}
\nc{\card}{{\rm card}}
\nc{\uni}{{\rm uni}}
\nc{\beq}[1]{\begin{eqnarray}\label{#1}}
\nc{\eeq}{\end{eqnarray}}
\def\U{{\rm U}}
\nc{\cpt}{{\op{cpt}}} \nc{\Dol}{{\op{Dol}}} \nc{\DR}{{\op{DR}}}
\nc{\B}{{\op{B}}} \nc{\Triv}{\op{Triv}} \nc{\Hod}{{\op{Hod}}}
\nc{\Log}{{\op{Log}}} \nc{\Exp}{{\op{Exp}}} \nc{\Est}{E_{\op{st}}}
\nc{\Hst}{H_{\op{st}}} \nc{\Left}[1]{\hbox{$\left#1\vbox to
  10.5pt{}\right.\nulldelimiterspace=0pt \mathsurround=0pt$}}
\nc{\kight}[1]{\hbox{$\left.\vbox to
  10.5pt{}\right#1\nulldelimiterspace=0pt \mathsurround=0pt$}}
\nc{\LEFT}[1]{\hbox{$\left#1\vbox to
  15.5pt{}\right.\nulldelimiterspace=0pt \mathsurround=0pt$}}
\nc{\kIGHT}[1]{\hbox{$\left.\vbox to
  15.5pt{}\right#1\nulldelimiterspace=0pt \mathsurround=0pt$}}
\nc{\bee}{{\bf E}} \nc{\bphi}{{\bf \Phi}}
\newcommand\addvmargin[1]{
  \node[fit=(current bounding box),inner ysep=#1,inner xsep=0]{};
}
\begin{document}

\title{Locally free representations of quivers over commutative Frobenius algebras}

\author{Tam\'as Hausel
\\ {\it IST Austria}
\\{\tt tamas.hausel@ist.ac.at} \and Emmanuel Letellier \\ {\it
 Universit\'e de Paris, IMJ-PRG, CNRS}  \\ {\tt
  emmanuel.letellier@imj-prg.fr} \and Fernando Rodriguez Villegas
\\ 
{\it  ICTP Trieste} \\ {\tt villegas@ictp.it}  }

\pagestyle{myheadings}

\maketitle

\begin{abstract} In this paper we investigate locally free representations of a quiver $Q$ over a commutative Frobenius algebra $\R$ by arithmetic Fourier transform. When the base field is finite we prove that the number of isomorphism classes of absolutely indecomposable locally free representations of fixed rank is independent of the orientation of $Q$. We also prove that the number of isomorphism classes of locally free  absolutely indecomposable representations of the preprojective algebra of $Q$ over $\R$ equals the number of isomorphism classes of locally free absolutely indecomposable representations of $Q$ over $\R[t]/(t^2)$. Using these results together with results of Geiss, Leclerc and Schr\"oer we give, when $\k$ is algebraically closed, a  classification of  pairs $(Q,\R)$ such that the set of isomorphism classes of  indecomposable locally free representations of $Q$ over $\R$ is finite. Finally when the representation is free of rank $1$ at each vertex of $Q$, we study the function that counts the number of isomorphism classes of absolutely indecomposable locally free representations of $Q$  over the Frobenius algebra $\F_q[t]/(t^r)$. We prove that they are polynomial in $q$ and their generating function is rational and satisfies a functional equation.

\end{abstract}

\tableofcontents

\section{Introduction}\label{intro}

 Let $\R$ be a commutative Frobenius  $\k$-algebra over a field $\k$ (which will be either finite or algebraically closed).  This means that $\R$ is finite dimensional as a $\k$-vector space and equipped with a Frobenius $1$-form $\lambda:\R\to \k$, i.e. one which is non-zero on any nontrivial ideal of $\R$. Examples to keep in mind are the truncated polynomial rings $\k_d:=\k[t]/(t^d)$, in particular $\k_2=\k[\epsilon]/(\epsilon^2)$ which we usually abbreviate to $\k[\epsilon]$ the algebra of dual numbers. More generally if $\R$ is a Frobenius $\k$-algebra so is $\R[\epsilon]:=\R[\epsilon]/(\epsilon^2)$ (see \S \ref{Frobsection} for more details). 

We fix $Q=(I,E)$ a finite quiver, with set of vertices $I$ and set of arrows $E$. A representation of $Q$ over the ring $\R$ is given by an $\R$-module $M_i$ at each vertex $i\in I$ together with a $\R$-module homomorphism $M_a:M_i\rightarrow M_j$ for any arrow $a:i\rightarrow j\in E$. The category of representations of $Q$ over $\R$ is equivalent to the category $\R Q$-Mod where $\R Q$ denotes the path algebra of $Q$. 
According to \cite[Definition 1.1]{GLS}, a representation $\{M_i,M_a\}_{i\in I,a\in E}$ of $Q$ is said to be \emph{locally free} if the $\R$-modules $M_i$ are all free. If we denote by $e_i\in\R Q$ the idempotent corresponding to vertex $i\in I$, locally free representations of $Q$  correspond to $\R Q$-modules $M$ such that for all $i\in I$, the $\R$-module $M_i=e_iM$ is free.

When $\R=\k$ is a field Gabriel \cite{gabriel1} in 1972 followed by  Kac in \cite{kac} in 1982 initiated the detailed study of the representation category $\k Q$-Mod. They discovered a deep connection with the representation theory of Kac-Moody algebras with symmetric Cartan matrices. To find the analogue connection with Kac-Moody algebras with symmetrizable but non-symmetric Cartan matrices, Gabriel \cite{gabriel2} in 1974 followed by Dlab and Ringel \cite{dlab-ringel} in 1974 introduced the notion of modulated graphs, in particular allowed various finite field extensions attached to the vertices of the graph. To get a uniform theory working over any, for example algebraically closed, fields Geiss--Leclerc--Schr\"oer \cite{GLS} in 2015  introduced the study of representations of quivers over $\R=\k_d$ the truncated polynomial ring over a field $\k$. Here they managed to find the desired connection with non-simply laced Kac-Moody algebras. Later in 2016 Geuenich \cite{geuenich} extended results of \cite{GLS} to the case when $\R$ is a Frobenius algebra. Finally other recent papers \cite{ringel-zhang1, ringel-zhang2, li-ye, kaplan} study different approaches to representations of quivers over various Frobenius algebras.

Many of the  results mentioned above have been achieved by use of reflection functors. In
this paper we will concentrate on the subcategory of locally free
representations. It is not stable under reflection functors, which use
kernels and cokernels. Thus
usual techniques will not be available. Instead we will use arithmetic
harmonic analysis and prove results showing that locally free
representations behave better when changing orientations of the
quiver, and have interesting connections to representations of
preprojective algebras.

Let $\alpha\in \N^I$ (elements of $\N^I$ will be called \emph{rank vectors}) and put
$$\rep^\alpha(Q,\R):=\bigoplus_{i\rightarrow j\in E} 
\Hom_\R(\R^{\alpha(i)},\R^{\alpha(j)})$$ the $\R$-module of locally free $\R$-representations of the
quiver $Q$ of rank $\alpha$. The group $$\G_\alpha(\R):=\varprod_{i\in I} \GL(\R^{\alpha(i)})$$ acts on $\rep^\alpha(Q,\R)$ and so gives a homomorphism \beq{action} \rho:\G_\alpha(\R)\to {\rm Aut}_\R(\rep^\alpha(Q,\R)).\eeq Finally let $\g_\alpha(\R)=\oplus_{i\in} \gl(\R^{\alpha(i)})$ be the Lie algebra of $\G_\alpha(\R)$ and $\varrho:\g_\alpha(\R)\to \End(\rep^\alpha(Q,\R))$ the derivative of \eqref{action}, the infinitesimal action. 
The set of isomorphism classes of locally free $\R$-representations of $Q$ of dimension $\alpha$, 
can be identified with the orbit space $\rep^\alpha(Q,\R)/\G_\alpha(\R)$. 

If $\k$ is a finite field we put

\bes m_\alpha(\R Q):=\card(\rep^\alpha(Q,\R)/\G_\alpha(\R))\ees  the number of isomorphism classes of locally free $\R$-representations of $Q$ of dimension $\alpha$. 

If $\k$ is algebraically closed, the number of isomorphism classes could be infinite and so instead we consider  the positive integers (cf.  \S \ref{algclose} for more details)

$$
d:={\rm dim}\,\{(x,g)\in\rep^\alpha(Q,\R)\times\G_\alpha(\R)\,|\,g\cdot x=x\}-{\rm dim}\, \G_\alpha(\R)
$$
and  the number $c$ of irreducible components of $\{(x,g)\in\rep^\alpha(Q,\R)\times\G_\alpha(\R)\,|\,g\cdot x=x\}$ of maximal dimension (here our $\R$-schemes are regarded as $\k$-schemes). The pair $(c,d)$ gives us thus an estimation of the size of the orbit space $\rep^\alpha(Q,\R)/\G_\alpha(\R)$. Note that if $d=0$, then the orbit space is finite of cardinality $c$.
By abuse of notation we also use the notation $m_\alpha(\R Q)$ to denote the pair $(c,d)$ in the algebraically closed field case. 

Given two representations of $Q$ over $\R$ we can form their direct sum. We define an {indecomposable} representation of $Q$ over $\R$ as one  which  is not isomorphic with a direct sum of two non-trivial representations. If we assume that $\R$ is local then the category of locally free representations is Krull-Schmidt. Therefore when talking about indecomposable representations we will always assume  that $\R$ is a local Frobenius $\k$-algebra which we will further assume to be \emph{split} (see \S \ref{Frobsection}). When $M$ is a representation of $Q$ over $\R$ then it is \emph{absolutely indecomposable} if the representation given by $${M}_a\otimes_\R \overline{\R}: M_i\otimes_\R \overline{\R}\to M_j\otimes_\R \overline{\R}$$ is indecomposable over $\overline{\R}:=\R\otimes_\k \overline{\k},$ where $\overline{\k}$ is the algebraic closure of $\k$. Let us denote by $\rep^\alpha_{\rm a.i.}(Q,\R)$  the subset of $\rep^\alpha(Q,\R)$ of absolutely indecomposable representations. 

If $\k$ is a finite field we put

$$
a_\alpha(\R Q):=\card(\rep^\alpha_{\rm a.i.}(Q,\R)/\G_\alpha(\R)).
$$Otherwise $\k$ is algebraically closed, absolutely indecomposable just means indecomposable  and we define $a_\alpha(\R Q)$ as the pair $(c,d)$ where $c$ is the number of irreducible components of $\{(x,g)\in\rep^\alpha_{\rm a.i.}(Q,\R)\times\G_\alpha(\R)\,|\,g\cdot x=x\}$ of maximal dimension and $d$ is

$${\rm dim}\,\{(x,g)\in\rep^\alpha_{\rm a.i.}(Q,\R)\times\G_\alpha(\R)\,|\,g\cdot x=x\}-{\rm dim}\, \G_\alpha(\R).$$
Denote by $|\alpha|$ the integer $\sum_{i\in I}\alpha(i)$.

\begin{theorem}\label{main1} Assume that $\R$ is a  Frobenius algebra over a finite or algebraically closed field $\k$. Let $Q$ be a quiver and let $Q'$ be another quiver obtained from $Q$ by changing the orientation of some of the arrows. Then $$m_\alpha(\R Q)=m_\alpha(\R Q').$$If moreover $\R$ is local, split and if $\k$ contains a primitive $|\alpha|$-th root of unity then $$a_\alpha(\R Q)=a_\alpha(\R Q').$$
\label{indeptheo}\end{theorem}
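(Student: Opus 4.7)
The plan is to reduce to the case where $Q'$ differs from $Q$ by reversing a single arrow $a : i \to j$ (iterate to handle any number of flipped arrows), and then to exploit the Frobenius-form duality between
\[
V := \Hom_\R(\R^{\alpha(i)}, \R^{\alpha(j)}) \quad \text{and} \quad W := \Hom_\R(\R^{\alpha(j)}, \R^{\alpha(i)})
\]
furnished by the $\k$-bilinear pairing $(M, N) \mapsto \lambda(\Tr_\R(MN))$. This pairing is non-degenerate (since $\lambda$ is a Frobenius $1$-form and the $\R$-trace gives a perfect pairing on $\R$-matrices) and $\G_\alpha(\R)$-invariant because, using cyclicity of trace, $\Tr_\R(g_j M g_i^{-1}\cdot g_i N g_j^{-1}) = \Tr_\R(MN)$.

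\textbf{Step 1 (equality of $m_\alpha$, finite-field case).} For $\k=\F_q$, Burnside's formula gives
\[
m_\alpha(\R Q) = \frac{1}{|\G_\alpha(\R)|} \sum_{g \in \G_\alpha(\R)} |\rep^\alpha(Q,\R)^g|.
\]
The fixed-point set factors as a product over arrows, and for each single arrow we need $\dim_{\F_q} V^g = \dim_{\F_q} W^g$. But under the pairing above, the $\k$-linear map $(g-1)|_V$ is dual to $(g^{-1}-1)|_W$, so rank-equals-transpose-rank yields the dimension equality, hence $|V^g| = |W^g|$.

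\textbf{Step 2 (equality of $m_\alpha$, algebraically closed case).} Work with the inertia scheme $Z(Q) := \{(x,g) \in \rep^\alpha(Q,\R) \times \G_\alpha(\R) : g \cdot x = x\}$. One can either check directly that the fiberwise Frobenius-form duality produces a $\G_\alpha(\R)$-fibered isomorphism between $Z(Q)$ and $Z(Q')$ of the same relative dimension, or spread out $(\R, Q)$ to a finitely generated $\Z$-subalgebra and use constructibility of $(c,d)$ to reduce to Step~1 applied over the reductions $\F_{q^n}$. Either route preserves both the dimension and the count of top-dimensional irreducible components.

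\textbf{Step 3 (equality of $a_\alpha$).} The duality does not send absolutely indecomposables to absolutely indecomposables, so $a_\alpha$ must be extracted from $m_\alpha$ by a Hua-Kac-type inversion. Using the Krull-Schmidt property of locally free $\R Q$-modules (valid since $\R$ is assumed local) and summing over iso classes of indecomposables, one obtains a plethystic identity, schematically
\[
\sum_\alpha m_\alpha(\R Q) X^\alpha = \mathrm{Exp}\!\left( \sum_\alpha A_\alpha(\R Q) X^\alpha \right),
\]
in which the exponents $A_\alpha$ encode (via a Galois-theoretic descent over $\overline{\k}$) the absolutely indecomposable counts $a_\beta$ for $\beta \leq \alpha$ and for $\R$ and its finite base extensions. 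Since the left-hand side is orientation-invariant by Step~1 (and Step~2 in the algebraically closed setting), inverting the plethystic exponential forces $a_\alpha(\R Q) = a_\alpha(\R Q')$. The hypothesis that $\k$ contains a primitive $|\alpha|$-th root of unity enters precisely to split the cyclotomic ingredients of this descent, aligning the $\overline{\k}$-rational orbits of absolutely indecomposables with the $\k$-rational count $a_\alpha$ so that the inversion is clean.

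\textbf{Main obstacle.} Steps 1 and 2 are conceptually short: they reduce to a single trace-Frobenius duality. The real work is Step 3 — establishing the plethystic formula in the Frobenius-algebra setting (the locally free subcategory is not abelian, so one has to argue directly from Krull-Schmidt rather than by exact-sequence manipulations) and carefully tracking how the $|\alpha|$-th root-of-unity hypothesis controls the Galois/character-theoretic descent from $\overline{\k}$ to $\k$. This is where the paper's arithmetic harmonic analysis will do its heaviest lifting.
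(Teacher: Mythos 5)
Your Steps 1 and 2 are essentially correct and closely parallel the paper, though the paper packages the duality differently: rather than comparing fixed-point counts $|V^g|=|W^g|$ directly via Burnside, the paper builds a $\G_\alpha(\R)$-equivariant linear isomorphism $\C[\rep^\alpha(Q,\R)]\cong\C[\rep^\alpha(Q',\R)]$ by Fourier transform along the single flipped Hom-factor, using precisely the Frobenius trace pairing $(f,g)\mapsto \lambda(\Tr(g\circ f))$ you identify, and then extracts $m_\alpha$ as the dimension of the $\G_\alpha(\R)$-invariant subspace. The spreading-out reduction for the algebraically-closed case in your Step 2 is exactly the paper's route (Section 5.2), via Grothendieck's trace formula and Deligne's bounds on Frobenius eigenvalues; the alternative "direct fiberwise isomorphism" you float in passing does not obviously work, since Frobenius duality identifies $V^g$ with a quotient of $W$ rather than with $W^g$ itself.

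Step 3 is where there is a genuine gap, and the paper's proof takes a genuinely different and much shorter route. You propose a Hua--Kac plethystic inversion, which is plausible but which you explicitly do not carry out; in the Frobenius-algebra setting with a non-abelian locally-free subcategory this would require substantial new work, as you concede. The paper instead constructs a one-dimensional character $1^\chi:=\chi\circ\det\circ\pi(\k)$ of $\G_\alpha(\R)$, where $\chi:\k^\times\to\C^\times$ has order $|\alpha|$, and proves (their Lemma 5.7) that $1^\chi$ restricts trivially to the stabilizer of $M$ precisely when $M$ is absolutely indecomposable — the nontrivial direction uses the Frobenius-orbit decomposition of $\overline{M}$ and the norm map $N_{\k'/\k}$. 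This gives directly $a_\alpha(\R Q)=\dim\C[\rep^\alpha(Q,\R)]_{1^\chi}$, and taking the $1^\chi$-isotypic component of the equivariant isomorphism from Step 1 finishes the argument, with no plethystic formula or descent calculus needed. Note also that your proposed explanation of the root-of-unity hypothesis is off: in a Hua--Kac descent such a hypothesis would not naturally appear (Kac's original theorem over fields needs none), whereas in the paper it is used for exactly one thing — to guarantee the existence of the order-$|\alpha|$ character $\chi$ underlying $1^\chi$.
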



The version of this theorem when $\R=\k$ was called the { \em fundamental lemma} in \cite{kraft-riedtmann} in Kac's characterization \cite{kac} of roots  in terms of $a_\alpha(\k Q).$ 

To formulate our second main result consider the double quiver $\overline{Q}=(I,E\cup E^*)$ obtained from $Q$ by adjoining a new arrow $a^*:j\leftarrow i$ for each arrow $a:i\rightarrow j\in E$, and denote by $Q^*$ the \emph{opposite} quiver $(I,E^*)$ of $Q$.  
Then define the moment map
\beq{moment}\mu_\R=\mu_{\R,\alpha}: \begin{array}{ccc}\rep^\alpha(\overline{Q},\R) &\to& \g_\alpha(\R)\\ M&\mapsto& \sum_{a\in E} M_a M_{a^*}- M_{a^*}M_a \end{array} \eeq 

The group $\G_\alpha(\R)$  acts naturally on both sides, and the map $\mu_\R$ is equivariant. Now denote by $\R\overline{Q}$  the path algebra of $\overline{Q}$ and let $\sigma=\sum_{a\in E} aa^*-a^*a\in \R\overline{Q}$. Then the points of $\mu_\R^{-1}(0)$ correspond to the locally free representations of the $\R$-algebra $$\Pi_\R(Q)=\R\overline{Q}/(\sigma)$$ of dimension $\alpha$. The $\R$-algebra $\Pi_\R(Q)$ is called the preprojective algebra of $Q$ over $\R$.  So $\mu_\R^{-1}(0)/\G_\alpha(\R)$ can be thought as the set of isomorphism classes of locally free representations of $\Pi_\R(Q)$. We define $m_\alpha(\Pi_\R(Q))$ as we defined $m_\alpha(\R Q)$ with $\rep^\alpha(Q,\R)$ replaced by $\mu^{-1}_\R(0)$. We also define $a_\alpha(\Pi_\R(Q))$ similarly as $a_\alpha(\R Q)$ with $\rep^\alpha_{\rm a.i.}(Q,\R)$ replaced by the set of absolutely indecomposable  locally free representations of $\Pi_\R(Q)$. 
Then we have

\begin{theorem}\label{main2} If $\R$ is a Frobenius algebra over a finite or algebraically closed field $\k$ then we have 
 $$m_\alpha(\Pi_\R(Q))=m_\alpha(\R[\epsilon] Q).$$If moreover $\R$ is local, split and if $\k$ contains a primitive $|\alpha|$-th root of unity then $$a_\alpha(\Pi_\R(Q))=a_\alpha(\R[\epsilon] Q).$$
	\label{maintheoquiv}\end{theorem}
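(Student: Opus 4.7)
The plan is to prove the first equality over a finite field $\k=\F_q$ by combining Burnside's lemma with the arithmetic Fourier transform on $\g_\alpha(\R)$, then refine to absolutely indecomposable representations, and finally descend to the algebraically closed case by spreading out. First I would unpack the $\R[\epsilon]$-side. A locally free rank $\alpha$ representation of $Q$ over $\R[\epsilon]$ identifies with a pair $(A,B)\in \rep^\alpha(Q,\R)^{\oplus 2}$, and $\G_\alpha(\R[\epsilon]) \cong \G_\alpha(\R)\ltimes \g_\alpha(\R)$ (with adjoint action), an element $(g_0,h)$ acting by
$$(g_0,h)\cdot (A,B) \;=\; \bigl(g_0\cdot A,\ g_0\cdot B + [h,\, g_0\cdot A]\bigr).$$
Writing $T_{g_0}:=1-g_0$ for the induced endomorphism of $\rep^\alpha(Q,\R)$, a pair $(A,B)$ is fixed iff $g_0\cdot A = A$ and $T_{g_0}(B) = [h,A]$; so for each $(g_0,h,A)$ the number of admissible $B$'s is $|\ker T_{g_0}| = |\rep^\alpha(Q,\R)^{g_0}|$ when $[h,A]\in \operatorname{Im}T_{g_0}$ and zero otherwise.

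On the preprojective side I would apply Fourier inversion to enforce $\mu_\R(M)=0$, using the non-degenerate $\k$-bilinear pairing $(x,y)\mapsto \lambda(\Tr(xy))$ on $\g_\alpha(\R)$ (supplied by the Frobenius form $\lambda$) and a non-trivial additive character $\psi$ of $\k$:
$$|\mu_\R^{-1}(0)^{g_0}| \;=\; \frac{1}{|\g_\alpha(\R)|}\sum_{x\in \g_\alpha(\R)}\sum_{M\in \rep^\alpha(\overline{Q},\R)^{g_0}}\psi\bigl(\lambda\Tr(\mu_\R(M)x)\bigr).$$
Cyclicity rewrites $\Tr(\mu_\R(M)x) = \sum_{a\in E}\Tr(M_{a^*}[x,M_a])$. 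Summing the $M_{a^*}$ over the $g_0$-fixed subspace $\rep^\alpha(Q^*,\R)^{g_0}$ and invoking the $\G_\alpha(\R)$-equivariant perfect pairing $\rep^\alpha(Q,\R)\times \rep^\alpha(Q^*,\R)\to \k$ (whose annihilator identity $(\rep^\alpha(Q^*,\R)^{g_0})^\perp = \operatorname{Im}T_{g_0}$ and rank-nullity identity $|\rep^\alpha(Q^*,\R)^{g_0}| = |\rep^\alpha(Q,\R)^{g_0}|$ both follow from duality) collapses the inner sum, and I obtain
$$\sum_{h\in\g_\alpha(\R)}|\rep^\alpha(Q,\R[\epsilon])^{(g_0,h)}| \;=\; |\g_\alpha(\R)|\cdot |\mu_\R^{-1}(0)^{g_0}|.$$
Dividing by $|\G_\alpha(\R[\epsilon])| = |\G_\alpha(\R)|\cdot |\g_\alpha(\R)|$ and summing over $g_0$ then yields $m_\alpha(\R[\epsilon]Q) = m_\alpha(\Pi_\R(Q))$ via Burnside.

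For the absolutely indecomposable statement, I would refine the count via a Hua-type M\"obius inversion that extracts $a_\alpha$ from the $m_\beta$'s by twisting with characters of the cyclic group $\Z/|\alpha|\Z$; the primitive $|\alpha|$-th root of unity hypothesis makes those characters $\k$-valued, so the matching of the previous step can be applied uniformly to the twisted counts and then inverted to give $a_\alpha(\R[\epsilon]Q) = a_\alpha(\Pi_\R(Q))$. For $\k$ algebraically closed, I would spread out $(Q,\R,\alpha)$ over a finitely generated $\Z$-subring, apply the finite-field case fibrewise at closed points, and deduce equality of the pairs $(c,d)$ from constructibility of the relevant strata and polynomiality of the fibrewise counts in $q$. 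I expect the main obstacle to be the absolutely indecomposable refinement: verifying that the clean count-level identity produced by Fourier descends termwise through the M\"obius inversion after the appropriate character twists, and that the a.i. locus is compatibly tracked by the bijection implicit in that identity; the rest is a routine combination of Burnside and Fourier.
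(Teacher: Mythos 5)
Your finite-field argument for the first equality is correct, and it is essentially the paper's Fourier argument read off at the level of fixed-point counts: the paper constructs a $\G_\alpha(\R[\epsilon])$-equivariant Fourier isomorphism $\C[\V(\R)\times\V(\R)]\to\C[\V(\R)\times\V(\R)^\vee]$ (Proposition~\ref{mainprop}) and then takes invariants, whereas you run the same character orthogonality (valid precisely because $\ker(\psi\circ\lambda)$ contains no nonzero ideal, i.e.\ the Frobenius hypothesis) inside a Burnside count. Your termwise identity $\sum_h|{\rm Fix}(h,g_0)|=|\g_\alpha(\R)|\cdot|\mu_\R^{-1}(0)^{g_0}|$ for each $g_0\in\G_\alpha(\R)$ is fine and, as it happens, is exactly the strength of statement needed for the refinement.

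The genuine gap is in the absolutely indecomposable step. There is no ``Hua-type M\"obius inversion twisted by characters of $\Z/|\alpha|\Z$'' extracting $a_\alpha$ from the $m_\beta$, and the root-of-unity hypothesis has nothing to do with characters being $\k$-valued: the Kac--Hua machinery would instead require the $m$-identity over all finite extensions, the plethystic Exp/Log formalism and a Galois-descent argument, none of which you set up, and it would not need roots of unity at all. What the hypothesis is actually for (and what the paper does, following \cite{Let}) is this: choose a $\C$-valued character $\chi$ of $\k^\times$ of order $|\alpha|$ (possible exactly when $|\alpha|$ divides $q-1$), set $1^\chi=\chi\circ\det\circ\pi$ on $\G_\alpha(\R)$, and prove the key Lemma~\ref{keylemma} that $1^\chi$ restricts trivially to ${\rm Stab}_{\G_\alpha(\R)}(M)$ \emph{exactly} when $M$ is absolutely indecomposable; the nontrivial direction uses the Frobenius-orbit decomposition of $\overline{M}$ into indecomposables and the nonvanishing of $t\mapsto\chi(N_{\k'/\k}(t))$ on a copy of $\mathbb{G}_m(\k')$ inside the stabilizer. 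Granting that lemma, $a_\alpha=\dim_\C\C[X]_{1^\chi}$ by a weighted Burnside computation, and weighting your termwise fixed-point identity by $1^\chi(g_0)$ then yields $a_\alpha(\Pi_\R(Q))=a_\alpha(\R[\epsilon]Q)$; but without this stabilizer criterion (or a fully worked descent argument) the a.i.\ statement is simply not proved, since a count-level identity carries no ``implicit bijection'' respecting the a.i.\ locus. A secondary problem: in the algebraically closed case you appeal to ``polynomiality of the fibrewise counts in $q$'', which is neither known nor needed; the paper instead uses Deligne's bounds to get $\#Y(\F_{q^r})/\G(\F_{q^r})=cq^{rd}+O(q^{r(d-1/2)})$, recovering the pair $(c,d)$ from counts, and then an $\ell$-adic constructibility/specialization argument together with a Weil-restriction compatibility check (Theorem~\ref{charchange}) to pass from finite fields to an arbitrary algebraically closed $\k$.
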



When $\R=\k$ is a field this result relates representations of a quiver over $
\k_2=\k[\epsilon]$ and its preprojective algebra. Thus it unexpectedly  bridges two papers \cite{GLS1,GLS} of Geiss--Leclerc--Schr\"oer. Also in the case of the Jordan quiver and $\R=\k$ a finite field the first part of Theorem~\ref{main2} is \cite[Theorem 1]{jambor-plesken} of Jambor--Plesken, which was the starting point of our research. In this case Theorem 1.2 amounts to the agreement of the number of similarity classes of $n\times n$ matrices over $\k_2$ with the number of isomorphism classes of commuting pairs of $n\times n$ matrices over $\k$.   We will discuss the case of the Jordan quiver in more detail in a forthcoming publication. 

We start in Section 2 by introducing Frobenius algebras. In Section 3 we discuss the Fourier transform for finitely generated modules over a finite Frobenius algebra. In Section 4  we apply such Fourier transforms for representations of algebraic group schemes, culminating in Proposition~\ref{mainprop} and its many corollaries among them formula \eqref{mainfor}. In Section 4.3 we give  counterexamples to \eqref{mainfor}, when $\R$ is not a Frobenius algebra. In Section 5 we develop the technical machinery to deduce our main Theorems~\ref{main1} and \ref{main2} from the finite field versions. In Section 6 we classify all quivers and Frobenius algebras  with finitely many locally free indecomposable representations. Finally in Section 7 we compute the number of indecomposable representations of a quiver of rank vector $\bf 1$ over $\k_d$, prove that their generating function is rational and satisfies an Igusa-type functional equation.

\paragraph{Acknowledgements}  Special thanks go to Christof Geiss, Bernard Leclerc and Jan Schr\"oer for explaining their work but also for sharing some unpublished results with us. We also thank the referee for many useful suggestions. We would like to thank Tommaso Scognamiglio for pointing out a mistake in the proof of Proposition 5.17 in an  earlier version of the paper. We would like also to thank Alexander Beilinson, Bill Crawley-Boevey, Joel Kamnitzer, and Peng Shan for useful discussions. 

The work on this paper started at a Research in Pairs retreat at the Oberwolfach Forschunginstitute in 2014. We are
grateful for the Institute for the ideal conditions. Research of T.H. was supported by the  Advanced Grant ``Arithmetic and physics of Higgs moduli spaces'' no. 320593 of the European Research Council, by grants no. 144119, no. 153627
and NCCR SwissMAP, funded by the Swiss National Science Foundation. Research of E.L. was supported by ANR-13-BS01-0001-01. 

\section{Frobenius algebras}\label{Frobsection}

Let $\k$ be field, which in this paper will be either finite or algebraically closed. Let $\R$ be 
a  commutative $\k$-algebra which is  finite dimensional as a $\k$ vector space.   We say that $\R$ is a Frobenius
algebra if we have an $\R$-module isomorphism \beq{dual}\R\cong_{\R}
\R^{\! *},\eeq where $\R^*:=\Hom_\k(\R,\k)$ is an $\R$-module by $r\lambda(x)=\lambda(rx)$ for $\lambda\in\R^*$, $r,x\in \R$.

Equivalently to the condition \eqref{dual} a Frobenius algebra is defined by the 
$\k$-linear functional $\lambda:\R\to \k$, called {\em Frobenius form}.
It corresponds to $1\in\R$ in the isomorphism of \eqref{dual}. For \eqref{dual} to be an isomorphism  is equivalent with $\lambda$   not vanishing on any
non-zero ideal of $\R$. 

Every Frobenius algebra $\R$ can be decomposed as a direct product of local Frobenius algebras $\R\cong \R_1\times\cdots \times \R_n$. Local Frobenius algebras are precisely the finite dimensional local algebras with unique minimal ideal.   For a local Frobenius algebra $\R$ we denote by $\mathfrak{m}\triangleleft \R $ the maximal ideal and by  $\k^\prime=\R/\mathfrak{m}$ the residue field. When $\k\cong \k^\prime$ we call the local Frobenius algebra $\R$ { \em split}. We know from \cite[Theorem 7.7]{eisenbud} that a finite dimensional local algebra always contains its residue field. Thus we can consider any local Frobenius algebra $\R$ as a split  local Frobenius $\k^\prime$-algebra.

\begin{example}Let $\R$ be a finite dimensional $\k$-algebra. The trivial extension of $R$, i.e. the algebra
	$$\T^*\R:=\R^*\times \R$$ given by the multiplication

	$$(q_1,a_1)\cdot(q_2,a_2)=(a_2q_1+a_1q_2,a_1a_2)$$ is always a Frobenius $\k$-algebra with Frobenius form $\lambda(q,a)=q(1)$. Indeed if $(q,a)$ such that $a\neq 0$ we can find $q^\prime$ such that $q^\prime(a)\neq 0$ and so $$\lambda((q,a)\cdot (q^\prime,0))=\lambda(aq^\prime,0)=q^\prime(a)\neq 0.$$ Similarly if $q\neq 0$ there is an $a^\prime$ such that $q(a^\prime)\neq 0$ and so $\lambda((q,a)\cdot(0,a^\prime))=q(a^\prime)\neq 0$. Thus $\lambda$ is non-zero on any proper ideal in $\T^* \R$.
	\label{TR}\end{example}

\begin{example} Let $\R$ be a Frobenius algebra over a field $\k$ with Frobenius form $\lambda:\R\to \k$. Consider the finitely generated $\k$-algebra
	$\R_d:=\R[t]/(t^d)$ with $\lambda'\in \Hom_\k(\R_d,\k)$ defined by $$\lambda'(a_0+a_1t+\dots+a_{d-1}t^{d-1})=\lambda(a_{d-1}).$$ As every non-zero ideal $I\triangleleft \R$ intersects
	$t^{d-1}\R$ non-trivially, $\lambda'$ is nonzero  on $I$. Thus $\R_d$ is a Frobenius $\k$-algebra.
	
When $d=2$ we will denote $\R_2=\R[\epsilon]/(\epsilon^2)$ by $\R[\epsilon]$.
	When $\R$ is a Frobenius algebra, we have an isomorphism $\R[\epsilon]\cong \T^*\R$.  
\end{example}

\section{Fourier transform}\label{Fouriergeneral}

Let $\M$ be a finitely generated module over a Frobenius $\k$-algebra $\R$. We define the dual $\R$-module as

$$
\M^\vee:={\rm Hom}_\R(\M,\R).
$$
The $\R$-module $\M$  is then \emph{reflexive}, namely the canonical map $\M\rightarrow(\M^\vee)^\vee$ is an isomorphism by \cite[4.12.21(a)]{hazewinkel} as $\R$ is a \emph{Frobenius ring}. As this is crucial for us we will prove the reflexivity property in the case where $\k$ is a finite field.

Define the \emph{Pontryagin dual} $\R$-module of $\M$ as

$$
\M^\wedge:={\rm Hom}_{\text{Groups}}(\M,\C^\times).
$$

From now on in this section we assume that $\k$ is a finite field and $\R$ a Frobenius $\k$-algebra with $1$-form $\lambda:\R\rightarrow\k$, and that $\M$ is a finitely generated $\R$-module. In particular $M$ is a finite set which will be convenient to do Fourier analysis. We choose once and for all a non-trivial additive character $\psi:\k\rightarrow\C^\times$ and we put $\varpi:=\psi\circ\lambda:\R\rightarrow\C^\times$. 

Note that the kernel of the character $\varpi$ does not contain non-trivial ideals of $\R$ and so we have the following result.

\begin{lemma} The additive character $\varpi$ is a generator of the $\R$-module $\R^\wedge$.
\end{lemma}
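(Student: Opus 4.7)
The plan is to show that the $\R$-module map
\[
\Phi\colon \R\to \R^\wedge,\qquad r\mapsto r\cdot\varpi,
\]
(where $(r\cdot\varpi)(x)=\varpi(rx)=\psi(\lambda(rx))$) is an isomorphism; surjectivity of $\Phi$ is exactly the statement that $\varpi$ generates $\R^\wedge$ as an $\R$-module. That $\Phi$ is $\R$-linear is immediate from the definition of the $\R$-action on $\R^\wedge$.

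The key step is injectivity. Suppose $r\in\R$ satisfies $\Phi(r)=1$, i.e.\ $\psi(\lambda(rx))=1$ for every $x\in\R$. Then $\lambda(r\R)\subseteq\ker\psi$. Here $r\R$ is the principal ideal generated by $r$, so $\lambda(r\R)$ is a $\k$-linear subspace of $\k$ (because $\lambda$ is $\k$-linear and $r\R$ is stable under multiplication by $\k\subseteq\R$). Since $\k$ is a $1$-dimensional $\k$-vector space, $\lambda(r\R)$ is either $0$ or all of $\k$; the latter is impossible because $\psi$ is a nontrivial character on $\k$. Hence $\lambda(r\R)=0$, and the Frobenius property of $\lambda$ (it vanishes on no nonzero ideal of $\R$) forces $r\R=0$, i.e.\ $r=0$.

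To conclude I would use a counting argument: $\R$ is finite (since $\k$ is finite and $\R$ is finite dimensional over $\k$), and for any finite abelian group $A$ one has $|A^\wedge|=|A|$, so $|\R^\wedge|=|\R|$. An injective homomorphism between finite abelian groups of the same order is an isomorphism, so $\Phi$ is surjective. Therefore every character in $\R^\wedge$ has the form $x\mapsto \varpi(rx)=r\cdot \varpi$ for some $r\in\R$, which is exactly the statement that $\varpi$ generates $\R^\wedge$ as an $\R$-module.

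I do not expect a serious obstacle here; the only substantive input is the defining property of the Frobenius form, invoked exactly once to upgrade ``$\lambda$ kills the ideal $r\R$'' to ``$r=0$.'' The $\k$-linearity of $\lambda$ is what lets us pass from ``$\lambda(r\R)\subseteq\ker\psi$'' (a statement about an additive subgroup) to ``$\lambda(r\R)=0$'' (a statement about a $\k$-subspace), which is the small but essential observation.
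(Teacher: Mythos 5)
Your proof is correct and follows essentially the same route the paper intends: the paper states the lemma immediately after observing that $\ker\varpi$ contains no nontrivial ideals of $\R$, and your injectivity argument is precisely a proof of that observation applied to the principal ideal $r\R$, combined with the standard counting argument $|\R^\wedge| = |\R|$. The small intermediate step you highlight (using $\k$-linearity of $\lambda$ to promote $\lambda(r\R) \subseteq \ker\psi$ to $\lambda(r\R) = 0$) is exactly what is needed and is the right way to fill the gap the paper leaves implicit.
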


\begin{proposition} The map $$\M^\vee\rightarrow \M^\wedge,\,\,\,f\mapsto \varpi\circ f$$ is an isomorphism of $\R$-modules.
\label{Pont}\end{proposition}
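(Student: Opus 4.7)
The plan is to verify that the map is $\R$-linear, establish injectivity directly from the Frobenius condition on $\varpi$, and then match cardinalities to conclude surjectivity.

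First I would check $\R$-linearity. Given $r \in \R$, $f \in \M^\vee$, and $m \in \M$, the map sends $r \cdot f$ to $m \mapsto \varpi(rf(m))$, while from the $\R$-module structure on $\M^\wedge$ we have $(r \cdot (\varpi \circ f))(m) = (\varpi \circ f)(rm) = \varpi(f(rm)) = \varpi(rf(m))$, using the $\R$-linearity of $f$. So the two sides agree.

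Next, for injectivity, I would observe that if $\varpi \circ f$ is the trivial character, then $f(\M) \subseteq \ker \varpi$. Since $f$ is $\R$-linear and $\R$ is commutative, the image $f(\M)$ is an ideal of $\R$. By the remark just preceding the lemma that identifies $\varpi$ as a generator of $\R^\wedge$, $\ker \varpi$ contains no nonzero ideal of $\R$, and so $f(\M) = 0$, i.e.\ $f = 0$.

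Finally, to obtain surjectivity I would match cardinalities. Since $\k$, $\R$, and hence $\M$ are all finite, both $\M^\vee$ and $\M^\wedge$ are finite. Classical Pontryagin duality for finite abelian groups gives $|\M^\wedge| = |\M|$. For $\M^\vee$, the Frobenius condition provides an $\R$-module isomorphism $\R \cong \R^* = \Hom_\k(\R,\k)$, so by tensor-hom adjunction
$$\M^\vee = \Hom_\R(\M,\R) \;\cong\; \Hom_\R\bigl(\M,\Hom_\k(\R,\k)\bigr) \;\cong\; \Hom_\k(\M,\k) = \M^{*},$$
and $|\M^{*}| = |\M|$ since $\M$ is a finite-dimensional $\k$-vector space. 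Thus $|\M^\vee| = |\M^\wedge|$, and an injection between finite sets of equal cardinality is a bijection. The only step that really uses the Frobenius hypothesis is the cardinality equality $|\M^\vee| = |\M|$; this is the heart of the matter, since without Frobeniusness one cannot expect $\R$ to be self-dual as an $\R$-module in any form.
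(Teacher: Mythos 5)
Your proof is correct, but it follows a genuinely different route from the paper's. The paper proves surjectivity by an explicit construction: it presents $\M$ as a quotient $\pi:\R^r\rightarrow\M$, pulls $g\in\M^\wedge$ back to $\R^r$, uses the lemma that $\varpi$ generates $\R^\wedge$ as an $\R$-module to write each coordinate character as $\lambda_i\cdot\varpi$, assembles the $\lambda_i$ into a linear form $\overline{f}$ on $\R^r$, and then shows $\overline{f}$ kills $\ker(\pi)$ because for $x\in\ker(\pi)$ the whole ideal generated by $\overline{f}(x)$ lands in $\ker(\varpi)$; thus $\overline{f}$ descends to the desired $f$ with $\varpi\circ f=g$. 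You instead prove injectivity directly (the image $f(\M)$ is an ideal contained in $\ker(\varpi)$, hence zero) and get surjectivity by counting: $|\M^\wedge|=|\M|$ by finite Pontryagin duality, and $|\M^\vee|=|\M|$ via the Frobenius isomorphism $\R\cong_\R\R^*$ together with the adjunction $\Hom_\R(\M,\Hom_\k(\R,\k))\cong\Hom_\k(\M,\k)$. Your argument is shorter and avoids choosing a free presentation and the generating-character lemma altogether; indeed your chain of identifications $\M^\vee\cong\M^*\cong\M^\wedge$ could be upgraded to give the isomorphism itself (one would then only have to check it agrees with $f\mapsto\varpi\circ f$), whereas the paper's construction has the virtue of being self-contained and of producing explicit preimages, which fits the computational use of $\varpi$ in the Fourier transform that follows. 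One small correction to your closing remark: the Frobenius hypothesis is not used only in the cardinality step; your injectivity argument also uses it, since the fact that $\ker(\varpi)$ contains no nonzero ideal is exactly the condition that the Frobenius form $\lambda$ is nonvanishing on every nonzero ideal.
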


\begin{proof} Let $g\in \M^\wedge$. Consider $\M$ as the quotient of some $\R^r$ and denote by $\pi$ the quotient map $\R^r\rightarrow \M$. Let $(x_1,\dots,x_r)$ be the canonical basis of $\R^r$. Then for each $i=1,\dots,r$,  let $\alpha_i\in\R^\wedge$ be defined by $\alpha_i(h)=(g\circ\pi)(h x_i)$ for $h\in\R$. Since $\varpi$ is a generating character of $\R^\wedge$, we get for each $i=1,\dots,r$, an element $\lambda_i\in\R$ such that $\alpha_i=\lambda_i\cdot\varpi$. 

We let $\overline{f}:\R^r\rightarrow\R$ be the $\R$-linear form defined by $\overline{f}(x_i)=\lambda_i$ for all $i=1,\dots,r$. We then have $g\circ\pi=\varpi\circ\overline{f}$. We now prove that there exists a unique $\R$-linear form $f:\M\rightarrow\R$ making the following diagram commutative

$$
\xymatrix{\R^r\ar[r]^-\pi\ar[rd]_{\overline{f}}&\M\ar[r]^-g\ar[d]^f&\C^\times\\
&\R\ar[ru]_\varpi&}
$$
We need to see that ${\rm Ker}(\pi)\subset{\rm Ker}(\overline{f})$. For $x\in{\rm Ker}(\pi)$, we have $(\varpi\circ\overline{f})(x)=(g\circ\pi)(x)=1$ and so $\overline{f}(x)\in{\rm Ker}(\varpi)$. Moreover for all $\lambda\in\R$, we have $\lambda\overline{f}(x)=\overline{f}(\lambda x)$ and $\lambda x\in{\rm Ker}(\pi)$, and so ${\rm Ker}(\varpi)$ contains the ideal generated by $\overline{f}(x)$. Since the kernel of $\varpi$ does not contain non-trivial ideals of $\R$, we must have $\overline{f}(x)=0$ for all $x\in{\rm Ker}(\pi)$. 
\end{proof}

\begin{corollary}The $\R$-module $\M$ is reflexive.
\label{reflexive}\end{corollary}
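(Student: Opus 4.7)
The plan is to deduce reflexivity by chaining together two applications of Proposition~\ref{Pont} with the classical Pontryagin duality for finite abelian groups. Since $\k$ is finite and $\R$ is finite-dimensional over $\k$, every finitely generated $\R$-module $\M$ is itself a finite abelian group, so the classical Pontryagin evaluation map
\[
\mathrm{ev}^\wedge:\M\longrightarrow (\M^\wedge)^\wedge,\qquad m\longmapsto \bigl(g\mapsto g(m)\bigr)
\]
is a (bi-functorial) isomorphism of $\R$-modules.

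First I would apply Proposition~\ref{Pont} to $\M$, yielding an $\R$-module isomorphism $\Phi_\M:\M^\vee\to\M^\wedge$, $f\mapsto \varpi\circ f$. Next I would apply Proposition~\ref{Pont} to $\M^\vee$ (which is again finitely generated over $\R$, being a quotient of some $\R^n$ via the same argument as in the proof of Proposition~\ref{Pont}), obtaining $\Phi_{\M^\vee}:(\M^\vee)^\vee\stackrel{\sim}{\to}(\M^\vee)^\wedge$. Finally, the $\Z$-linear Pontryagin dual of $\Phi_\M$ is an isomorphism $\Phi_\M^\wedge:(\M^\wedge)^\wedge\stackrel{\sim}{\to}(\M^\vee)^\wedge$. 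Composing gives an $\R$-module isomorphism
\[
\M\;\xrightarrow{\;\mathrm{ev}^\wedge\;}\;(\M^\wedge)^\wedge\;\xrightarrow{\;\Phi_\M^\wedge\;}\;(\M^\vee)^\wedge\;\xleftarrow[\;\sim\;]{\;\Phi_{\M^\vee}\;}\;(\M^\vee)^\vee.
\]

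What remains is to identify this composite with the canonical evaluation $\mathrm{ev}:\M\to(\M^\vee)^\vee$, $m\mapsto(f\mapsto f(m))$. This is the one place requiring care, and I would handle it by a direct diagram chase: for $m\in\M$ and $f\in\M^\vee$, trace where $m$ goes under each route into $(\M^\vee)^\wedge$. Along the top route, $\mathrm{ev}^\wedge(m)$ is the character $g\mapsto g(m)$ on $\M^\wedge$; applying $\Phi_\M^\wedge$ precomposes with $\Phi_\M$, so $f\in\M^\vee$ is evaluated to $(\varpi\circ f)(m)=\varpi(f(m))$. Along the bottom route, $\mathrm{ev}(m)$ sends $f$ to $f(m)\in\R$, and $\Phi_{\M^\vee}$ applied to $\mathrm{ev}(m)$ sends $f$ to $\varpi(f(m))$. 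The two agree, so the composite equals $\mathrm{ev}$, which must therefore be an isomorphism.

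The only real obstacle is the compatibility diagram chase in the last paragraph; it is routine once the functoriality of $\Phi_{(-)}$ and of $\mathrm{ev}^\wedge$ is made explicit. All the genuine content — that $\varpi$ is a generator of $\R^\wedge$ and that every $g\in\M^\wedge$ lifts through $\varpi$ — has already been absorbed into Proposition~\ref{Pont}.
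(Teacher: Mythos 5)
Your proof is correct and is essentially the argument the paper intends: Corollary~\ref{reflexive} is stated without a written proof precisely because it follows from two applications of Proposition~\ref{Pont} together with classical Pontryagin duality for finite abelian groups, exactly as you lay out. Your diagram chase verifying that the composite of isomorphisms equals the canonical evaluation $\M\to(\M^\vee)^\vee$ is the right and necessary check, and your remarks on the $\R$-linearity of $\mathrm{ev}^\wedge$ and of $\Phi_\M^\wedge$ correctly address the only points where one might worry.
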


We thus have a  natural perfect pairing

$$
\langle\,,\,\rangle:\M\times\M^\vee\rightarrow\R.
$$

For a finite set $A$, denote by $\C[A]$ the $\C$-vector space of all functions $A\rightarrow \C$. It is endowed with the inner product

$$
(f,g)_A:=\sum_{x\in A}f(x)\overline{g(x)},
$$
for all $f,g\in\C[A]$.

Let $X$ be a finite non-empty set. Consider the correspondence 

$$
\xymatrix{&X\times\M\times \M^\vee\ar[dl]_{p_{12}}\ar[dr]^{p_{13}}&\\
X\times \M&&X\times\M^\vee}
$$
and define the Fourier transform $\mathcal{F}:\C[X\times\M]\rightarrow\C[X\times\M^\vee]$ by 

$$
\mathcal{F}(f):=(p_{13})_!\left((p_{12})^*(f)\otimes (p_{23})^*(\varpi\langle,\rangle)\right),
$$
for all $f\in\C[X\times \M]$. Namely

$$
\mathcal{F}(f)(x,v)=\sum_{u\in\M}\varpi(\langle u,v\rangle)f(x,u),
$$
for all $(x,v)\in X\times\M^\vee$.

We have the following standard proposition.

\begin{proposition}(1) The rescaled Fourier transform $\card(\M)^{-\frac{1}{2}}\,\mathcal{F}$ is an isometry with respect to $\left(\,,\,\right)_{X\times \M}$ and $\left(\,,\,\right)_{X\times \M^\vee}$.

(2) For $f,g\in\C[X\times \M]$, we have 

$$
\calF(f*g)=\calF(f)\,\calF(g),
$$
where the convolution $f*g$ of $f$ by $g$ is the function $(x,u)\mapsto
(f*g)(x,u)=\sum_{u_1+u_2=u}f(x,u_1)g(x,u_2)$.
\label{propFour}
\end{proposition}

\begin{proof}Indeed 

\begin{align*}
\left(\mathcal{F}(f),\mathcal{F}(g)\right)_{X\times\M^\vee}&=\sum_{(x,v)\in X\times\M^\vee}\left(\sum_{u\in\M}\varpi(\langle u,v\rangle)f(x,u)\right)\left(\sum_{w\in\M}\varpi(\langle -w,v\rangle)\overline{g(x,w)}\right)\\
&=\sum_{x,u,w}f(x,u)\overline{g(x,w})\sum_v\varpi(\langle u-w,v\rangle).
\end{align*}

We need to see that for any $u\in\M$, we have

$$
\sum_{v\in\M^\vee}\varpi\big(\langle u,v\rangle\big)=\begin{cases}|\M|&\text{ if } u=0,\\0&\text{ otherwise}.\end{cases}
$$
The map $\M^\vee\rightarrow\C^\times$, $v\mapsto\varpi(\langle u,v\rangle)$ is an additive character. Since the kernel of $\varpi$ does not contain non-zero ideals of $\R$, this map is non-trivial as long as there exists an element $v$ of $\M^\vee$ such that $\langle u,v\rangle\neq 0$. Since the pairing $\langle\,,\,\rangle$ is non-degenerate such an element $v$ exists when $u\neq 0$.

For (2) we have

\begin{align*}
\calF(f*g)(x,v)&=\sum_{u\in M}\varpi(\langle u,v\rangle)(f*g)(x,u)\\
&=\sum_{u\in M}\varpi(\langle u,v\rangle)\sum_{u_1+u_2=u}f(x,u_1)g(x,u_2)\\
&=\sum_{u_1,u_2\in M}\varpi(\langle u_1+u_2,v\rangle)f(x,u_1)g(x,u_2)\\
&=\sum_{u_1\in M}\varpi(\langle u_1,v\rangle)f(x,u_1)\sum_{u_2\in M}\varpi(\langle u_2,v\rangle)g(x,u_2)\\
&=\calF(f)(x,v)\calF(g)(x,v).
\end{align*}
\end{proof}

We note that if a finite group $\G$ acts on $X$ and on $\M$ via $\R$-automorphisms, then the induced $\G$-action on $\M^\vee$ will make the pairing $\langle\,,\,\rangle$ equivariant and so the Fourier transform $\calF$ is a $\G$-equivariant isomorphism. Since the dimension of the $\C$-vector space of $\G$-invariant functions agrees with the number of $\G$-orbits, we  deduce  the following lemma.

\begin{lemma} \label{brauer} Let $M$ be a finitely generated module over a finite Frobenius algebra $\R$. Let a finite group $\G$ act on $M$ by via $\R$-automorphisms, and also operate on a finite set $X$.  Then we have $$\card((X\times\M)/\G)=\card((X\times \M^\vee)/\G),$$ i.e. the number of $\G$-orbits on $X\times \M$ agree with the number of $\G$-orbits on $X\times \M^\vee$. 
\end{lemma}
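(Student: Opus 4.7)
The plan is to deduce the lemma directly from Proposition~\ref{propFour} together with the remark preceding the statement about $\G$-equivariance of the Fourier transform. The core observation is that $\calF$ is a $\C$-linear isomorphism from $\C[X\times\M]$ onto $\C[X\times\M^\vee]$ which intertwines the two natural $\G$-actions; once this is in place, counting orbits reduces to comparing the dimensions of the $\G$-invariant subspaces on both sides.

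First I would fix the $\G$-action on $\M^\vee$ to be the contragredient one, i.e.\ $(g\cdot v)(m)=v(g^{-1}\cdot m)$, which is the unique action making the pairing $\langle\,,\,\rangle:\M\times\M^\vee\to\R$ a $\G$-equivariant map (where $\G$ acts trivially on $\R$). Combined with the $\G$-action on $X$, this gives $\G$-actions on $X\times\M$, $X\times\M^\vee$, and $X\times\M\times\M^\vee$, and all three projections $p_{12}, p_{13}, p_{23}$ are $\G$-equivariant. Since $\varpi\circ\langle\,,\,\rangle$ is a $\G$-invariant function on $\M\times\M^\vee$, the defining formula
\[
\calF(f)(x,v)=\sum_{u\in\M}\varpi(\langle u,v\rangle)\,f(x,u)
\]
commutes with the $\G$-actions: for $g\in\G$, changing variables $u\mapsto g\cdot u$ in the sum (using that $\M$ is finite and the $\G$-action is a bijection) together with the invariance $\varpi(\langle g\cdot u,g\cdot v\rangle)=\varpi(\langle u,v\rangle)$ yields $\calF(g\cdot f)=g\cdot\calF(f)$.

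Next I would invoke Proposition~\ref{propFour}(1): the rescaled map $|\M|^{-1/2}\calF$ is an isometry between finite-dimensional Hermitian spaces of the same dimension, hence $\calF$ itself is a $\C$-linear isomorphism $\C[X\times\M]\xrightarrow{\sim}\C[X\times\M^\vee]$. Combined with the equivariance established above, it restricts to an isomorphism of invariant subspaces
\[
\calF:\C[X\times\M]^\G\;\xrightarrow{\sim}\;\C[X\times\M^\vee]^\G.
\]
Finally, by the standard averaging argument, for any finite $\G$-set $Y$ one has $\dim_\C \C[Y]^\G=\card(Y/\G)$ (an orthogonal basis of $\C[Y]^\G$ is given by the characteristic functions of the $\G$-orbits). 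Applying this to $Y=X\times\M$ and $Y=X\times\M^\vee$ and comparing dimensions gives the desired equality $\card((X\times\M)/\G)=\card((X\times\M^\vee)/\G)$.

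There is no serious obstacle here: every ingredient has been set up in the preceding discussion. The only point that needs care is the compatibility of the $\G$-actions with the pairing, which forces the contragredient convention on $\M^\vee$; once that convention is adopted the equivariance of $\calF$ is immediate, and the result follows from a one-line dimension count.
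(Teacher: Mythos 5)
Your proof is correct and follows exactly the same route as the paper: equip $\M^\vee$ with the contragredient action so the pairing is $\G$-equivariant, observe that $\calF$ is therefore a $\G$-equivariant linear isomorphism (invertibility following from the isometry in Proposition~\ref{propFour}), restrict to $\G$-invariants, and count orbits via $\dim_\C\C[Y]^\G=\card(Y/\G)$. The paper states this argument in the sentence preceding the lemma; you have merely spelled out the change-of-variables verification of equivariance, which is implicit there.
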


 \begin{remark} When $\R$ is a finite field and $X$ is a point, this observation is  attributed to Brauer \cite[Remark 5.5]{kraft-riedtmann}.  Also the orbit structure could be very different on the two sides. \end{remark}

\section{Fourier transform for group representations}

\subsection{Moment maps associated to group scheme actions}

Let $\k$ be a field. We denote by $\Alg_\k$ and $\Gr$ the categories of $\k$-algebras and groups. Let $\G$ be an algebraic affine $\k$-group scheme. By notation abuse we will use the same letter $\G$ to denote the associated $\k$-group functor $\Alg_\k\rightarrow \Gr$, $A\mapsto \G(\R)={\rm Hom}_{\k-{\bf alg}}(\k[\G],\R)$. 

Recall that, as a $\k$-group functor,  the Lie algebra $\g$ of $\G$ is defined by $\g(\R):={\rm Ker}(\G(\rp_\R))$ where for any $\k$-algebra $\R$ we put $\rp_\R:\R[\epsilon]\rightarrow \R$, $a+\epsilon b\mapsto a$. Denote by $e:\k[\G]\rightarrow\k$ the counit (the identity element in $\G(\k)$) and put $\I={\rm Ker}(e)$. Then for any $\k$-algebra $\R$, we have \cite[\S 10.19]{milne}

$$
\g(\R)\cong{\rm Hom}_\k(\I/\I^2,\R). 
$$  
From this we see that $\g(\R)$ is an Abelian group in fact an $\R$-module. We have  that the canonical morphism 

\beq{lie1}
\g(\k)\otimes_\k\R={\rm Hom}_\k(\I/\I^2,\k)\otimes_\k\R\rightarrow {\rm Hom}_\k(\I/\I^2,\R)=\g(\R)
\eeq
is an isomorphism \cite[\S 2, Proposition 2]{Bour}.
\bigskip

By definition $\g(\R)$ is a  normal subgroup of $\G(\R[\epsilon])$ and so the latter acts on $\g(\R)$ by conjugation. As $\g(\R)$ is Abelian this action factors through an action of $\G(\R)=\G(\R[\epsilon])/\g(\R)$ on $\g(\R)$. This is the so-called adjoint action of $\G$ on its Lie algebra, denoted \beq{adjoint}{\rm Ad}:\G(\R)\to {\rm Aut}(\g(\R)).\eeq Using the section $i_\R:\R\rightarrow\R[\epsilon]$, $a\mapsto a$ of $\rp_\R$ we see that the exact sequence $$0\rightarrow \g(\R)\rightarrow\G(\R[\epsilon])\rightarrow \G(\R)\rightarrow 1$$ splits and so

$$
\G(\R[\epsilon])\simeq \g(\R)\rtimes\G(\R).
$$

\bigskip

From now on we assume that $\V$ is a  finite-dimensional $\k$-vector space which we regard as a $\k$-group functor 

$$
\V(\R):=\V\otimes_\k\R,
$$
for any $\k$-algebra $\R$.

Denote by $\GL(\V)$ the $\k$-group scheme with $\k$-group functor  $$\R\mapsto {\rm End}_\R(\V(\R))^\times.$$

\begin{remark} Here we collect some simple observations how we can work explicitly with $\GL(\V)(\R[\epsilon])$.
	
\noindent (i)	 We can write the elements of $\GL(\V)(\R[\epsilon])$ uniquely in the form $u+\epsilon v$ where $u\in\GL(\V)(\R)$ and $v\in {\rm End}_{\R-{\rm mod}}(\V(\R))$, where for $x+\epsilon y\in\V(\R[\epsilon])=\V(\R)\oplus\epsilon\V(\R)$, we have 

\begin{equation}
(u+\epsilon v)(x+\epsilon y)=u(x)+\epsilon (u(y)+v(x)).
\label{eq1}\end{equation}
(ii) The Lie algebra $\gl(\V)(\R)$ is then identified with $\{1+\epsilon v\,|\, v\in{\rm End}_\R(\V(\R))\}\simeq {\rm End}_\R(\V(\R))$ and the adjoint action is the conjugation action of $\GL(\V)(\R)$ on ${\rm End}_\R(\V(\R))$.

\noindent (iii) An element $u+\epsilon v\in \GL(\V)(\R[\epsilon])$ corresponds to the element $(vu^{-1},u)\in \gl(\V)(\R)\rtimes\GL(\V)(\R)$. For $(w,g)\in \gl(\V)(\R)\rtimes\GL(\V)(\R)$ and $(x+\epsilon y)\in\V(\R[\epsilon])$, the equation $$(w,g)(x+\epsilon y)=g(x)+\epsilon \left(w(g(x))+g(y)\right),$$corresponds to (\ref{eq1}) with $(w,g)=(vu^{-1},u)$.

\label{GL}\end{remark}

\bigskip

Consider the $\k$-group functor $\V^\vee$  associated with the $\k$-vector space $\V^\vee={\rm Hom}_\k(\V,\k)$. Namely, for any $\k$-algebra $\R$ 

 $$
\V^\vee(\R):= \V^\vee\otimes_\k\R\simeq\Hom_\k(\V,\R)\simeq \Hom_\R(\V(\R),\R)=\V(\R)^\vee.
 $$
 The first isomorphism is \cite[\S 4, Proposition 2]{Bour} and the second one follows from the fact that the restriction map $\Hom_\R(\V(\R),\R)\rightarrow\Hom_\k(\V,\R)$ is the inverse map of $\Hom_\k(\V,\R)\rightarrow \Hom_\R(\V(\R),\R)$, $f\mapsto (v\otimes r\mapsto rf(v))$.

Consider a homomorphism of $\k$-group schemes $\rho:\G\to\GL(\V)$ with corresponding Lie algebra homomorphism $\varrho:={\rm Lie}(
\rho):\g\to \gl(\V)$ (c.f. \cite[\S 4.2]{DG}).


 We have the dual representation $\rho^\vee$ 
of $\G$ on $\V^\vee$ defined as
 
 $$
 \rho^\vee_\R(g)(f)(v)=f(\rho_\R(g^{-1})(v)),
 $$
 for all $g\in\G(\R)$, $v\in \V(\R)$ and $f\in\V^\vee(\R)$. It is equivalent to ask that $$\left\langle \rho_\R(g)(v), \rho_\R^\vee(g)(f)\right\rangle=\langle v,f\rangle,$$ for all $g,v$ and $f$ as above, where $\langle,\rangle$ denotes the natural pairing $\V(\R)\times \V^\vee(\R)\to \R$.  
 
  For a $\k$-algebra $\R$ we define the usual moment map 

$$
\mu_\R:\V(\R)\times \V(\R)^\vee\to \g(\R)^\vee
$$

by 

$$
\mu_\R(v,f)=(\delta_v)^\vee(f)
$$
where for $v\in\V(\R)$ we put $$ \begin{array}{cccc}\delta_v:&\g(\R)&\rightarrow&\V(\R)\\ &x&\mapsto&\varrho_\R(x)(v)\end{array}$$ which is  $\R$-linear, as $\varrho_\R={\rm Lie}(\rho)_\R$ is $\R$-linear. 
 Thus for any $(v,f)\in \V(\R)\times \V(\R)^\vee$ and $x\in \g(\R)$  we have \beq{momentdef} \mu_\R(v,f)(x)= \langle \varrho_\R(x)(v),f \rangle. \eeq We note that $\mu_\R$ is equivariant
  with respect to the natural action of $\G(\R)$, namely for $g\in\G(\R)$
 
 \begin{align}\nonumber
 \mu_\R\left(\rho_\R(g)(v),\rho^\vee(g)(f)\right))&=\left\langle\varrho_\R(x)(\rho_\R(g)(v)),\rho^\vee_\R(g)(f)\right\rangle\\& \nonumber =\left\langle\rho_\R(g^{-1})\circ\varrho_\R(x)\circ\rho_\R(g)(v),f\right\rangle\\&=\left\langle\varrho_\R({\rm Ad}(g^{-1})(x))(v),f\right\rangle \nonumber\\ &= \mu_\R(v,f)({\rm Ad}(g^{-1})(x))
 \label{equimoment}  \end{align}
where ${\rm Ad}$ denotes the adjoint action of $\G$ on $\g$.  Hence it makes sense to consider the orbit space $\mu_\R^{-1}(0)/\G(\R)$.



\subsection{Fourier transforms}

In this section we assume that $\R$ is a $\k$-Frobenius algebra with $1$-form $\lambda:\R\rightarrow\k$, $\V$ is a finite dimensional $\k$-vector space and $\rho:\G\rightarrow\GL(\V)$ a representation with infinitesimal action $\varrho:\g\rightarrow\gl(\V)$. 
In the following we make the identifications $\G(\R[\epsilon])\simeq\g(\R)\rtimes\G(\R)$ and $\V(\R[\epsilon])=\V(\R)\times\V(\R)$ where the second coordinate is the $\epsilon$-coordinate. We also use the simplified notations $$g\cdot y:=\rho_\R(g)(y),$$ and $$g\cdot f:= \rho^\vee_\R(g)(f)$$ with $g\in\G(\R)$, $y\in\V(\R)$ and $f\in \V^\vee(\R)$. 
By Remark~\ref{GL} (iii), the action of $\G(\R[\epsilon])$ on $\V(\R[\epsilon])$ reads

\begin{equation}
(x,g)\cdot(x_1,x_2)=(g\cdot x_1,\varrho_\R(x)(g\cdot x_1)+g\cdot x_2),
\label{actionfor}\end{equation}
for all $(x,g)\in\G(\R[\epsilon])$ and $(x_1,x_2)\in\V(\R[\epsilon])$. 

 We will also need the following formula (the first equation follows from \eqref{momentdef} and the second one from equivariance of the moment map \eqref{equimoment})

\begin{equation}
\left\langle \varrho_\R(x)(g\cdot x_1),g\cdot x_3\right\rangle=\mu_\R(g\cdot x_1,g\cdot x_3)(x)=\mu_\R(x_1,x_3)({\rm Ad}(g^{-1})(x)),
\label{momentfor}\end{equation}
for all $x\in\g(\R)$, $g\in\G(\R)$, $x_1\in\V(\R)$ and $x_3\in\V(\R)^\vee$, where $\langle\,,\,\rangle:\V(\R)\times\V(\R)^\vee\rightarrow\R$  is the natural pairing.

We now assume that $\k$ is a finite field and we fix a non-trivial additive character $\psi:\k\rightarrow\C^\times$ and we put $\varpi=\psi\circ\lambda:\R\rightarrow\C^\times$.  Define an action of $\G(\R[\epsilon])$ on the space of functions $\C[\V(\R)\times\V(\R)^\vee]$ by

\beq{action}
((x,g)^{-1}\cdot f)(x_1,x_3):=\varpi\left(-\mu_\R(x_1,x_3)({\rm Ad}(g^{-1})(x))\right)f(g\cdot x_1,g\cdot x_3),
\eeq
for all $(x,g)\in\G(\R[\epsilon])$, $(x_1,x_3)\in\V(\R)\times\V(\R)^\vee$ and $f\in\C[\V(\R)\times\V(\R)^\vee]$.

We now consider the Fourier transform $\mathcal{F}:\C[\V(\R)\times\V(\R)]\rightarrow\C[\V(\R)\times\V(\R)^\vee]$ as defined in \S \ref{Fouriergeneral}  with $X=\V(\R)$ and $\M=\V(\R)$. Namely

$$
\mathcal{F}(f)(x_1,x_3)=\sum_{x_2\in\V(\R)}\varpi\left(\langle x_2,x_3\rangle\right)f(x_1,x_2),
$$
for all $f\in\C[\V(\R)\times\V(\R)]$ and $(x_1,x_3)\in\V(\R)\times\V(\R)^\vee$.

\begin{proposition} $\mathcal{F}$ is a $\G(\R[\epsilon])$-equivariant isomorphism. \label{mainprop}
\end{proposition}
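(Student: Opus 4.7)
The plan is to verify equivariance by a direct change-of-variables computation, unpacking both group actions explicitly and then invoking the moment-map identity \eqref{momentfor} at the very end.

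First I would make the action on the source completely explicit. Since $\V(\R[\epsilon])=\V(\R)\times\V(\R)$ carries the natural action \eqref{actionfor}, the induced action on $\C[\V(\R)\times\V(\R)]$ is by pullback:
\begin{equation*}
((x,g)^{-1}\cdot f)(x_1,x_2)=f(g\cdot x_1,\ \varrho_\R(x)(g\cdot x_1)+g\cdot x_2).
\end{equation*}
With this in hand, the only real task is to compute $\mathcal{F}((x,g)^{-1}\cdot f)(x_1,x_3)$ and show it equals $((x,g)^{-1}\cdot \mathcal{F}(f))(x_1,x_3)$ as defined by \eqref{action}.

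Next I would carry out the computation. Expanding the definition gives
\begin{equation*}
\mathcal{F}((x,g)^{-1}\cdot f)(x_1,x_3)=\sum_{x_2\in\V(\R)}\varpi(\langle x_2,x_3\rangle)\,f(g\cdot x_1,\ \varrho_\R(x)(g\cdot x_1)+g\cdot x_2).
\end{equation*}
I would then substitute $u_2:=g\cdot x_2$, using the $\G$-invariance of the pairing (which gives $\langle g^{-1}\cdot u_2,x_3\rangle=\langle u_2,g\cdot x_3\rangle$), and afterwards shift by $v_2:=u_2+\varrho_\R(x)(g\cdot x_1)$ to absorb the $\epsilon$-component of the action. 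These two changes of variables are bijections on the finite set $\V(\R)$ and produce
\begin{equation*}
\varpi\bigl(-\langle \varrho_\R(x)(g\cdot x_1),\ g\cdot x_3\rangle\bigr)\sum_{v_2}\varpi(\langle v_2,g\cdot x_3\rangle)\,f(g\cdot x_1,v_2),
\end{equation*}
and the sum on the right is exactly $\mathcal{F}(f)(g\cdot x_1,g\cdot x_3)$.

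Finally I would identify the remaining phase factor. By the moment-map identity \eqref{momentfor}, one has $\langle \varrho_\R(x)(g\cdot x_1),\ g\cdot x_3\rangle=\mu_\R(x_1,x_3)({\rm Ad}(g^{-1})(x))$, so the prefactor is precisely $\varpi(-\mu_\R(x_1,x_3)({\rm Ad}(g^{-1})(x)))$, matching \eqref{action}. Combining, $\mathcal{F}((x,g)^{-1}\cdot f)=(x,g)^{-1}\cdot\mathcal{F}(f)$, and since $\mathcal{F}$ is already a $\C$-linear isomorphism by Proposition~\ref{propFour}, this proves the claim.

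The only conceptual subtlety, and what I would call the main (mild) obstacle, is keeping track of the two different group actions and making sure the sign in the $\epsilon$-shift ends up producing exactly the cocycle appearing in \eqref{action}; this is precisely where \eqref{momentfor} is needed, and it is the reason the twist on the $\V\times\V^\vee$ side is built from the moment map.
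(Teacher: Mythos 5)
Your proof is correct and follows essentially the same route as the paper: expand $\mathcal{F}((x,g)^{-1}\cdot f)$, perform a change of variables in the $\epsilon$-coordinate (the paper does it in one step $x_2'=\varrho_\R(x)(g\cdot x_1)+g\cdot x_2$ where you split it into $u_2=g\cdot x_2$ then a shift, which is the same thing), and identify the resulting phase via \eqref{momentfor}. The appeal to Proposition~\ref{propFour} for bijectivity of $\mathcal{F}$ is also the intended argument.
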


\begin{proof}We have 

\begin{align*}\mathcal{F}\left((x,g)^{-1}\cdot f\right)(x_1,x_3)&=\sum_{x_2}\varpi(\langle x_2,x_3\rangle)((x,g)^{-1}\cdot f)(x_1,x_2)\\
&=\sum_{x_2}\varpi(\langle x_2,x_3\rangle)f(g\cdot x_1,\varrho_\R(x)(g\cdot x_1)+g\cdot x_2)
\end{align*}
After the change of variables

$$
x'_2=\varrho_\R(x)(g\cdot x_1)+g\cdot x_2,
$$
we find

\begin{align*}\mathcal{F}\left((x,g)^{-1}\cdot f\right)(x_1,x_3)&=\sum_{x_2'}\varpi\left(\langle x'_2-\varrho_\R(x)(g\cdot x_1),g\cdot x_3\rangle\right) f(g\cdot x_1,x'_2)\\
&=\varpi\left(\langle-\varrho_\R(x)(g\cdot x_1),g\cdot x_3\rangle\right)\sum_{x'_2}\varpi(\langle x'_2,g\cdot x_3\rangle)f(g\cdot x_1,x_2')\\
&=\varpi(-\mu_\R(x_1,x_3)({\rm Ad}(g^{-1}(x))))\,\mathcal{F}(f)(g\cdot x_1,g\cdot x_3)\\
&=\left((x,g)^{-1}\cdot\mathcal{F}(f)\right)(x_1,x_3).
\end{align*}
The third identity follows from (\ref{momentfor}).
\end{proof}

We regard $\g(\R)$ as a subgroup of $\G(\R[\epsilon])$ from which we get an action of $\g(\R)$ on $\C[\V(\R)\times\V(\R)^\vee]$ by restricting the action \eqref{action}. The next result relates the  isotypical components of $\C[\V(\R)\times\V(\R)^\vee]$ with respect to linear characters of $\g(\R)$ and the fibers of the moment map $\mu_\R$.

First of all notice that any linear (additive) character $\g(\R)\rightarrow\C^\times$ can be written uniquely in the form $\varpi\circ \xi$ for some  $\xi\in\g(\R)^\vee$. If $H$ is a finite commutative group acting on a finite dimensional $\C$-vector space $V$, and if $\chi$ is a linear character of $H$, we denote by $V_\chi$ the  $\chi$-isotypical component, namely

$$
V_\chi=\{f\in V\,|\, h\cdot f=\chi(h)\, f\text{ for all }h\in H\}.
$$

\begin{lemma}Let $\xi\in\g(\R)^\vee$. Then $$\C[\V(\R)\times\V(\R)^\vee]_{\varpi\circ\xi}=\C[\mu_\R^{-1}(\xi)],$$ where we identify $\C[\mu_\R^{-1}(\xi)]$ with the $\C$-vector subspace of $\C[\V(\R)\times\V(\R)^\vee]$ of functions supported on $\mu_\R^{-1}(\xi)$.
\label{lemmaiso}\end{lemma}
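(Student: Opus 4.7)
The plan is to unwind the $\g(\R)$-action on $\C[\V(\R)\times\V(\R)^\vee]$ obtained by restriction of \eqref{action}, and then invoke the non-degeneracy of the Frobenius form to identify the $(\varpi\circ\xi)$-isotypic component with the functions supported on $\mu_\R^{-1}(\xi)$.

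First I would specialise formula \eqref{action} to elements $(x,1)\in\g(\R)\rtimes\G(\R)=\G(\R[\epsilon])$. Since $g=1$ we have $g\cdot x_1=x_1$, $g\cdot x_3=x_3$ and ${\rm Ad}(g^{-1})(x)=x$; inverting in the semidirect product sends $(x,1)$ to $(-x,1)$, so a short computation starting from \eqref{action} yields
$$
(x\cdot f)(x_1,x_3)\;=\;\varpi\bigl(\mu_\R(x_1,x_3)(x)\bigr)\, f(x_1,x_3).
$$
In other words the restricted $\g(\R)$-action is pointwise multiplication: at each point $(x_1,x_3)$ it acts through the additive character $x\mapsto \varpi(\mu_\R(x_1,x_3)(x))$ of $\g(\R)$.

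Next I would translate the condition $f\in \C[\V(\R)\times\V(\R)^\vee]_{\varpi\circ\xi}$ into the requirement that, for every $(x_1,x_3)$ with $f(x_1,x_3)\neq 0$ and every $x\in\g(\R)$,
$$
\varpi\bigl((\mu_\R(x_1,x_3)-\xi)(x)\bigr)=1.
$$
Setting $\eta:=\mu_\R(x_1,x_3)-\xi\in\g(\R)^\vee={\rm Hom}_\R(\g(\R),\R)$, this exactly says that the image $\eta(\g(\R))\subseteq \R$ is contained in $\ker \varpi$.

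The key step is then to deduce $\eta=0$. Since $\eta$ is $\R$-linear, its image is an $\R$-submodule of $\R$, i.e.\ an ideal; and by the defining property of the Frobenius form $\lambda$ — equivalently, the statement preceding Proposition~\ref{Pont} that $\varpi$ generates $\R^\wedge$ — the subgroup $\ker\varpi\subseteq\R$ contains no nonzero ideal of $\R$. Hence $\eta=0$, i.e.\ $\mu_\R(x_1,x_3)=\xi$, so $f$ is supported on $\mu_\R^{-1}(\xi)$. The reverse inclusion is immediate from the boxed formula above: if $f$ is supported on $\mu_\R^{-1}(\xi)$ then at any nonvanishing point the character reduces to $\varpi\circ\xi$, while off the support both sides of the eigenvector equation vanish. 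I do not anticipate a real obstacle here; the only delicate input is the non-degeneracy of $\lambda$, which is precisely what powers the entire Fourier formalism of \S\ref{Fouriergeneral}.
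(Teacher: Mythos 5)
Your argument is correct and is essentially the same as the paper's: you restrict \eqref{action} to $\g(\R)$ to see the action is pointwise multiplication by $\varpi(\mu_\R(x_1,x_3)(\cdot))$, observe that the $\varpi\circ\xi$-isotypic condition forces the $\R$-linear map $\mu_\R(x_1,x_3)-\xi$ to have image (an ideal) inside $\ker\varpi$, and conclude it vanishes by the Frobenius property. The only cosmetic difference is that you name the difference $\eta$; the paper writes it out directly.
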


\begin{proof} Suppose $f\in\C[\V(\R)\times\V(\R)^\vee]_{\varpi\circ\xi}$, and let $(x_1,x_3)\in\V(\R)\times\V(\R^\vee)$. For any $x\in\g(\R)$ we have

$$
(x\cdot f)(x_1,x_3)=(\varpi\circ\xi)(x)\, f(x_1,x_3).
$$
But also by (\ref{action}) we have 

$$
(x\cdot f)(x_1,x_3)=\varpi(\mu_\R(x_1,x_3)(x))\, f(x_1,x_3).
$$
If $f(x_1,x_3)\neq 0$, then for all $x\in\g(\R)$ we have $$\varpi(\xi(x))=\varpi(\mu_\R(x_1,x_3)(x)),$$ i.e. $$\xi(x)-\mu_\R(x_1,x_3)(x)\in{\rm Ker}(\varpi).$$  As $\xi(x)-\mu_\R(x_1,x_3)(x)$ is $\R$-linear in $x$ the image $\{\xi(x)-\mu_\R(x_1,x_3)(x) : x\in \g(\R)\}$ is an ideal of $\R$. Since the kernel of $\varpi$ does not contain non-trivial ideals of $\R$ we must have $$\xi(x)-\mu_\R(x_1,x_3)(x)=0$$ for all $x\in\g(\R),$ that is $(x_1,x_3)\in\mu_\R^{-1}(\xi)$. We proved that $\C[\V(\R)\times\V(\R)^\vee]_{\varpi\circ\xi}\subset\C[\mu^{-1}_\R(\xi)]$ and the other inclusion is straigthforward.

\end{proof}

From Proposition \ref{mainprop} and Lemma \ref{lemmaiso} we deduce

\begin{corollary}Let $\xi\in\g(\R)^\vee$. Then $\mathcal{F}$ restricts to an isomorphism of ${\rm Stab}_{\G(\R)}(\xi)$-modules 

\begin{equation}\C[\V(\R)\times\V(\R)]_{\varpi\circ\xi}\simeq \C[\mu_\R^{-1}(\xi)].\label{mainiso}\end{equation}

If $\xi$ is central, then 

\begin{equation}
\card\left(\mu_\R^{-1}(\xi)/\G(\R)\right)={\rm dim}\, \C[\V(\R)\times\V(\R)]_{\varpi\circ \xi}^{\G(\R)},
\label{mainfor1}\end{equation}
where the right hand side is the dimension of the $\G(\R)$-invariant subspace of $\C[\V(\R)\times\V(\R)]_{\varpi\circ \xi}$. 

In particular for $\xi=0$, we have 

\begin{equation}
\card \left(\mu_\R^{-1}(0)/\G(\R)\right)=\card \left(\V(\R[\epsilon])/\G(\R[\epsilon])\right),
\label{mainfor}\end{equation}

\label{maincoro}\end{corollary}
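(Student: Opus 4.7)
My plan is to deduce all three assertions of the corollary directly by combining Proposition \ref{mainprop} with Lemma \ref{lemmaiso}. The isomorphism \eqref{mainiso} comes from the fact that $\mathcal{F}$ is $\G(\R[\epsilon])$-equivariant and hence commutes in particular with the action of the normal subgroup $\g(\R)\triangleleft \G(\R[\epsilon])$; this forces $\mathcal{F}$ to respect the decomposition of both source and target into $\g(\R)$-isotypical components under any character $\varpi\circ\xi$. Applied to the right-hand side, the $(\varpi\circ\xi)$-isotypical component is identified with $\C[\mu_\R^{-1}(\xi)]$ by Lemma \ref{lemmaiso}, giving the desired $\C$-linear isomorphism. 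To upgrade it to an isomorphism of $\text{Stab}_{\G(\R)}(\xi)$-modules, I would use the splitting $\G(\R[\epsilon])\simeq \g(\R)\rtimes\G(\R)$: the subgroup $\G(\R)$ acts on characters of $\g(\R)$ through the coadjoint action, so an element $g\in\G(\R)$ preserves the $(\varpi\circ\xi)$-isotypical component precisely when $g\in\text{Stab}_{\G(\R)}(\xi)$. On the target side, $\text{Stab}_{\G(\R)}(\xi)$ preserves the fiber $\mu_\R^{-1}(\xi)$ by the equivariance \eqref{equimoment}. The $\G(\R[\epsilon])$-equivariance of $\mathcal{F}$ then restricts to a $\text{Stab}_{\G(\R)}(\xi)$-equivariant isomorphism on these subspaces.

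For \eqref{mainfor1}, the assumption that $\xi$ is central gives $\text{Stab}_{\G(\R)}(\xi)=\G(\R)$, so I can take $\G(\R)$-invariants on both sides of \eqref{mainiso}. On the right-hand side, $\G(\R)$ is finite and acts on the finite set $\mu_\R^{-1}(\xi)$; by the standard Burnside / character-orthogonality argument the dimension of $\C[\mu_\R^{-1}(\xi)]^{\G(\R)}$ equals the number of $\G(\R)$-orbits on $\mu_\R^{-1}(\xi)$. Comparing with the left-hand side yields \eqref{mainfor1}.

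Finally, for \eqref{mainfor} I specialize to $\xi=0$, which is automatically central since the coadjoint action fixes the zero covector. The isotypical component for the trivial character $\varpi\circ 0$ is precisely the space of $\g(\R)$-invariants, and using $\G(\R[\epsilon])\simeq \g(\R)\rtimes \G(\R)$ together with the identification $\V(\R[\epsilon])=\V(\R)\times \V(\R)$ from \eqref{actionfor}, invariance under $\G(\R[\epsilon])$ is the same as invariance under $\g(\R)$ followed by invariance under $\G(\R)$. Hence the right-hand side of \eqref{mainfor1} at $\xi=0$ equals $\dim \C[\V(\R[\epsilon])]^{\G(\R[\epsilon])}$, which in turn is the number of $\G(\R[\epsilon])$-orbits on $\V(\R[\epsilon])$; this is \eqref{mainfor}. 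The main subtlety throughout is keeping track of the two different realizations of the $\G(\R[\epsilon])$-action that are being compared: the natural geometric one on $\V(\R)\times \V(\R)$ from \eqref{actionfor}, and the twisted one on $\V(\R)\times \V(\R)^\vee$ from \eqref{action}. Matching these up is exactly the content of Proposition \ref{mainprop}, so once that is in hand the deduction above is essentially bookkeeping.
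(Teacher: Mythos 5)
Your proof is correct and follows essentially the same route as the paper's own (very terse) argument, which simply records that \eqref{mainiso} follows from Proposition~\ref{mainprop} and Lemma~\ref{lemmaiso} and that \eqref{mainfor1} is obtained by taking $\G(\R)$-invariants. You have merely filled in the bookkeeping — in particular the use of the Frobenius property to conclude that fixing $\varpi\circ\xi$ is equivalent to fixing $\xi$, and the passage from $\g(\R)$- and $\G(\R)$-invariants to $\G(\R[\epsilon])$-invariants via the semidirect-product splitting — none of which deviates from the paper's intent.
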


Formula (\ref{mainfor1}) is obtained from the isomorphism (\ref{mainiso}) by taking the $\G(\R)$-invariant parts.

\begin{remark} We can compute the dimension of $\C[\V(\R)\times\V(\R)]_{\varpi\circ\xi}$ in \eqref{mainiso} by taking inner product of the character $\chi_{\bar{\varrho}_{R}}$ of the representation  $\bar{\varrho}_{R}$ of the finite additive group $\g(R)$ on  $\C[\V(\R)\times\V(\R)]$ with the character ${\varpi\circ\xi}$. Recalling that the action  $\bar{\varrho}_{\R}$ is induced by restricting \eqref{actionfor} to $\g(\R)\subset \G(\R[\epsilon])$, i.e. setting $g=1$, we obtain
\bes \dim(\C[\V(\R)\times\V(\R)]_{\varpi\circ\xi})&=&\langle\chi_{\bar{\varrho}_{R}},{\varpi\circ\xi}\rangle\\& =& \frac{1}{|\g(\R)|}\sum_{x\in \g(\R)  }\Tr\,({\bar{\varrho}_{R}}(-x))( \varpi\circ\xi)(x)\\&=& \frac{1}{|\g(\R)|}\sum_{x\in \g(\R)  }|\V(R)||\ker(\varrho_\R(x))|( \varpi\circ\xi)(x).\ees Thus \eqref{mainiso} implies that 
\begin{equation}\label{dimension}\dim(\C[\mu_\R^{-1}(\xi)])=|\mu_\R^{-1}(\xi)|=\frac{1}{|\g(\R)|}\sum_{x\in \g(\R)  }|\V(\R)||\ker(\varrho_\R(x))|( \varpi\circ\xi)(x).\end{equation}
When $\R=\k$ a finite field \eqref{dimension} is \cite[Proposition 2]{hausel-kac}, which was the count formula used to determine the Poincar\'e polynomials of Nakajima quiver varieties. Thus \eqref{mainiso} could be considered a categorification of \cite[Proposition 2]{hausel-kac}.
\end{remark}

\subsection{A counter-example}\label{counterex}

We give  counter-examples to Formula (\ref{mainfor}) when $\R$ is not a Frobenius $\k$-algebra.

Consider $\R=\F_q[t_1,\dots,t_n]/(t_1,\dots,t_n)^2$. It is not a Frobenius $\F_q$-algebra for $n>1$. We put $$\G:=\mathbb{G}_m={\rm Spec}(\F_q[t,t^{-1}]),$$ and we consider the action of $\G$ on $\mathbb{V}=\mathbb{A}^1={\rm Spec}(\F_q[t])$ by multiplication. 

Then   $\V(\R[\epsilon])/\G(\R[\epsilon])$ is the set of $\R[\epsilon]^\times$-orbits on $\R[\epsilon]$ for the multiplication action. Then

$$
A_n(q):=\card\,\left(\R[\epsilon]/\R[\epsilon]^\times\right)=\frac{1}{\card\left(\R[\epsilon]^\times\right)}\card\left\{(a,b)\in\R[\epsilon]^\times\times\R[\epsilon],\, (a-1)b=0\right\}.
$$
Write $a=a_0+A+\epsilon(a_0'+A')$ with $a_0,a_0'\in\F_q$ and $A,A'\in(t_1,\dots,t_n)$, similarly for $b$.

The equation $(a-1)b=0$ is equivalent to the following system in $\k$

$$
\begin{cases}(a_0+A)(b_0+B)=(b_0+B),\\
(a_0+A)(b_0'+B')+(a_0'+A')(b_0+B)=b_0'+B'.
\end{cases}
$$
We have the following cases :

(i) If $a_0\neq 1$, then $a-1$ is invertible and so we must have $b=0$. The number of solutions of $ab=b$ is thus $(q-2)q^{2n+1}$.

(ii) If $a_0=1$, $a_0'=0$, $A\neq 0$, then $b_0=b_0'=0$ and $B,B'$ can be anything, and the number of solutions of $ab=b$ is $q^{3n}(q^n-1)$.

(iii) If $a_0=1$, $a_0'=0$, $A=0$ and $A'\neq 0$, then $b_0=0$ and $b_0', B, B'$ can be anything. In this case  the number of solutions of $ab=b$ is $q^{2n+1}(q^n-1)$.

(iv) If $a=1$, then $b$ can be anything, and so  the number of solutions of $ab=b$ is $q^{2n+2}$.

(v)  If $a_0=1$, $a_0'\neq 0$, then $b_0=0$ and $B$ is determined, and  the number of solutions of $ab=b$ is $(q-1)q^{3n+1}$.
\bigskip

The total number of orbits is then

\begin{align*}
A_n(q)&=\frac{1}{(q-1)q^{2n+1}}\left((q-2)q^{2n+1}+q^{3n}(q^n-1)+q^{2n+1}(q^n-1)+q^{2n+2}+(q-1) q^{3n+1}\right)\\
&=2+q^n+\sum_{i=0}^{n-1}q^{i+n-1}+\sum_{i=0}^{n-1}q^i.
\end{align*}
\bigskip

We now compute the number of $\G(\R)$-orbits of $\mu_\R^{-1}(0)$. The moment map $\mu_\R:\V(\R)\times\D(\V)(\R)\rightarrow\D(\g(\k))(\R)$  is just the multiplication $\R\times\R\rightarrow\R$  and the action of $\G(\R)=\R^\times$ is given by $\lambda\cdot(a,b)=(\lambda a,\lambda^{-1} b)$. We compute the number $B_n(q)$ of $\R^\times$-orbits of $\{(a,b)\in\R\times\R\,|\, ab=0\}$.

Write an element $a\in\R$ in the form $a_0+A$ with $a_0\in\F_q$ and $A\in(t_1,\dots,t_n)$. The equation $ab=0$ is equivalent to the system

$$
\begin{cases}a_0b_0=0,\\
a_0B+b_0A=0.\end{cases}
$$
We have the following cases :
\bigskip

(i) If $a_0\neq 0$ (resp. $b_0\neq 0$), then $b=0$ (resp. $a=0$) , and so there is only one $\R^\times$-orbit in this case.

(ii) If $a_0=b_0$ and $A,B$ both non-zero, then there are exactly $(q^n-1)^2/(q-1)$ orbits.

(iii) If  $a=0$, $b_0=0$ and $B\neq 0$ (resp. $b=0$, $a_0=0$ and $A\neq 0$) then there are $(q^n-1)/(q-1)$ orbits.

(iv) If $a=b=0$, there is one orbit.
 \bigskip
 
 We then find
 
 $$
 B_n(q)=3+(q-1)\left(\sum_{i=0}^{n-1}q^i\right)^2+2\left(\sum_{i=0}^{n-1}q^i\right).
 $$
 A simple calculation shows that $B_n-A_n =
 (q^n-1)(q^{n-1}-1)$. Therefore the equality $A_n=B_n$ holds only for
 $n=1$.  In fact only in this case $\R$ is a finite Frobenius
 algebra and the equality $A_1=B_1$ is hence an instance of our main theorem.

\section{Representations of quivers}

We use the notation of the introduction.

\subsection{The finite field case}

Here we prove Theorems \ref{main1} and\ref{maintheoquiv} in the finite field case. We thus assume that $\k$ is a finite field  and that $\R$ is a Frobenius $\k$-algebra.

\begin{proposition} 

\noindent (i) Assume that $Q'$ is a quiver obtained from $Q$ by changing the orientation of some arrows. Then there exists an isomorphism $\C[\rep^\alpha(Q,\R)]\rightarrow\C[\rep^\alpha(Q',\R)]$ of $\G_\alpha(\R)$-modules.

\noindent (ii) Let $\mu_\R=\mu_{R,\alpha}$ be the moment map (\ref{moment}). There exists an isomorphism $\C[\rep^\alpha(Q,\R[\epsilon])]^{\g_\alpha(\R)}\rightarrow\C[\mu_{\R,\alpha}^{-1}(0)]$ of $\G_\alpha(\R)$-modules.
\label{proprep}\end{proposition}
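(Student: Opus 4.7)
My plan is to deduce both parts from the equivariant Fourier transform machinery developed in Sections~3 and 4, in each case after a direct change of coordinates.

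For part (i), let $A\subseteq E$ be the subset of arrows whose orientation is reversed in passing from $Q$ to $Q'$, and decompose
\[
\rep^\alpha(Q,\R)=X\oplus \M,\qquad X:=\bigoplus_{a\in E\setminus A}\Hom_\R(\R^{\alpha(s(a))},\R^{\alpha(t(a))}),\qquad \M:=\bigoplus_{a\in A}\Hom_\R(\R^{\alpha(s(a))},\R^{\alpha(t(a))}).
\]
The arrow-wise trace pairing $(M,N)\mapsto \sum_{a\in A}\Tr(N_aM_a)$ identifies the $\R$-dual $\M^\vee$ with $\bigoplus_{a\in A}\Hom_\R(\R^{\alpha(t(a))},\R^{\alpha(s(a))})$, so $X\oplus \M^\vee\cong \rep^\alpha(Q',\R)$ as $\G_\alpha(\R)$-modules (a short check using cyclicity of trace shows the induced action on $\M^\vee$ is exactly the natural action on the reversed-arrow summands). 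The action of $\G_\alpha(\R)$ on $X$ and on $\M$ is by $\R$-module automorphisms, so the remark preceding Lemma~\ref{brauer} guarantees that the Fourier transform $\calF\colon\C[X\times \M]\to \C[X\times \M^\vee]$ of Section~3 is $\G_\alpha(\R)$-equivariant, and this is the desired isomorphism.

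For part (ii), I would apply Corollary~\ref{maincoro} with $\V:=\rep^\alpha(Q,\k)$, $\G:=\G_\alpha$ and $\xi=0$. Functoriality gives $\V(\R)=\rep^\alpha(Q,\R)$, and the same trace pairing identifies $\V(\R)^\vee$ with $\rep^\alpha(Q^*,\R)$, so $\V(\R)\times \V(\R)^\vee\cong \rep^\alpha(\overline{Q},\R)$. A direct computation using cyclicity of trace and the formula $(\varrho_\R(x)(M))_a = x_{t(a)}M_a-M_ax_{s(a)}$ shows that the abstract moment map $(4.6)$, valued in $\g_\alpha(\R)^\vee$, agrees under the trace self-duality $\g_\alpha(\R)^\vee\cong \g_\alpha(\R)$ with the commutator moment map~\eqref{moment}. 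The splitting $\R[\epsilon]=\R\oplus\epsilon\R$ gives $\V(\R[\epsilon])=\V(\R)\times \V(\R)$ equivariantly for $\G_\alpha(\R[\epsilon])=\g_\alpha(\R)\rtimes\G_\alpha(\R)$, so $\rep^\alpha(Q,\R[\epsilon])\cong \V(\R)\times \V(\R)$; the isotypical component $\C[\V(\R)\times \V(\R)]_{\varpi\circ 0}$ appearing in~\eqref{mainiso} is then literally the $\g_\alpha(\R)$-invariant subspace $\C[\rep^\alpha(Q,\R[\epsilon])]^{\g_\alpha(\R)}$. Formula~\eqref{mainiso} at $\xi=0$ therefore reads as a $\G_\alpha(\R)$-equivariant isomorphism $\C[\rep^\alpha(Q,\R[\epsilon])]^{\g_\alpha(\R)}\cong \C[\mu_\R^{-1}(0)]$, which is exactly the statement of (ii).

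The main obstacle is bookkeeping rather than mathematics: one must carefully match the abstract moment map of Section~4, which lands in $\g_\alpha(\R)^\vee$, with the explicit commutator form~\eqref{moment} landing in $\g_\alpha(\R)$, and verify that the identifications $\V(\R)^\vee\cong\rep^\alpha(Q^*,\R)$, $\V(\R[\epsilon])\cong\V(\R)\times\V(\R)$, and the dual $\G_\alpha(\R)$-actions line up properly with the Fourier transform. Once these compatibilities are verified, both parts follow directly from Section~3 and Corollary~\ref{maincoro}.
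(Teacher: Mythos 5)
Your proposal is correct and follows essentially the same route as the paper: part (i) is the Section~3 Fourier transform applied to the summand of reversed arrows, with the reversed-arrow space identified with the $\R$-dual via the trace pairing (the paper reduces to reversing one arrow at a time, but this is immaterial), and part (ii) is exactly the specialization of Corollary~\ref{maincoro} to $\V=\rep^\alpha(Q,\k)$, $\G=\G_\alpha$, $\xi=0$, with the same bookkeeping identifying the abstract moment map with \eqref{moment} and the trivial isotypical component with the $\g_\alpha(\R)$-invariants. The only detail you assert without comment is that the trace pairing is perfect, which the paper justifies in one line; for free $\R$-modules this is routine, so there is no gap.
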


\begin{proof} The assertion (ii) is a particular case of the first assertion of Corollary \ref{maincoro} with $\V:=\rep^\alpha(Q,\k)$, $\G=\G_\alpha$ and $\xi=0$. The implication of Assertion (i) on the dimension of invariants in the case where $\R=\k$ is a result of Kac. The proof given in \cite[\S 5.5]{kraft-riedtmann} in the finite field case still work in the Frobenius $\k$-algebra case. It can actually be simplified as we now see. The proof reduces to the case where $Q'$ is obtained from $Q$ by inverting the orientation of one arrow $a:i\rightarrow j$ only. We then put 

$$
\H={\rm Hom}(\R^{\alpha(i)},\R^{\alpha(j)}), \hspace{1cm} \H'={\rm Hom}(\R^{\alpha(j)},\R^{\alpha(i)}),\hspace{1cm}\K=\bigoplus_{k\rightarrow l\in E\backslash\{a\}} {\rm Hom}_\R(\R^{\alpha(k)},\R^{\alpha(l)}).
$$
We have $\rep^\alpha(Q,\R)=\K\oplus \H$ and $\rep^\alpha(Q',\R)=\K\oplus \H'$. The assertion (i) is then a consequence of the results of \S \ref{Fouriergeneral} with $X=\K$, $\M=\H$ and where we identified $\H'$ with $\H^\vee$ via the perfect pairing

$$
\H'\times \H\rightarrow \R, \hspace{1cm} (f,g)\mapsto {\rm Tr}(g\circ f).
$$
The above pairing is perfect as the multiplication $\R\times\R\rightarrow \R$ is perfect. Indeed, if $ab=0$ for all $a\in \R$, then $\lambda(ab)=0$ for all $a\in\R$ and so $\lambda$ vanishes on the ideal generated by $b$ which is impossible as the kernel of $\lambda$ does not contain non-trivial ideals of $\R$.
\end{proof}

\begin{corollary} 

\noindent (i) Assume that $Q'$ is a  quiver obtained from $Q$ by changing the orientation of some arrows. Then 

$$
m_\alpha(\R Q)=m_\alpha(\R Q').
$$

\noindent (ii) We have 

$$
m_\alpha(\Pi_\R(Q))=m_\alpha(\R[\epsilon]Q).
$$
\label{finitecase}\end{corollary}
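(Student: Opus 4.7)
The plan is to deduce both statements from Proposition \ref{proprep} by combining orbit-counting (Burnside) with the identification of invariant subspaces.

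For (i), I would start from the $\G_\alpha(\R)$-equivariant isomorphism $\C[\rep^\alpha(Q,\R)] \cong \C[\rep^\alpha(Q',\R)]$ provided by Proposition \ref{proprep}(i). Taking $\G_\alpha(\R)$-invariants on both sides yields an equality of dimensions. Since the dimension of the invariant subspace of $\C[X]$ under a group action equals the number of orbits on $X$, this immediately gives $m_\alpha(\R Q) = m_\alpha(\R Q')$. This step is essentially formal and should require only one line.

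For (ii), the idea is to compute the orbits of $\G_\alpha(\R[\epsilon])$ on $\rep^\alpha(Q,\R[\epsilon])$ in two stages, using the semidirect product decomposition $\G_\alpha(\R[\epsilon]) \simeq \g_\alpha(\R) \rtimes \G_\alpha(\R)$. First I would write
\[
\C[\rep^\alpha(Q,\R[\epsilon])]^{\G_\alpha(\R[\epsilon])} = \bigl(\C[\rep^\alpha(Q,\R[\epsilon])]^{\g_\alpha(\R)}\bigr)^{\G_\alpha(\R)}.
\]
Then I would apply Proposition \ref{proprep}(ii), which furnishes a $\G_\alpha(\R)$-equivariant isomorphism $\C[\rep^\alpha(Q,\R[\epsilon])]^{\g_\alpha(\R)} \cong \C[\mu_\R^{-1}(0)]$. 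Taking $\G_\alpha(\R)$-invariants of both sides gives the equality of dimensions
\[
\dim \C[\rep^\alpha(Q,\R[\epsilon])]^{\G_\alpha(\R[\epsilon])} = \dim \C[\mu_\R^{-1}(0)]^{\G_\alpha(\R)}.
\]
The left-hand side counts $\G_\alpha(\R[\epsilon])$-orbits on $\rep^\alpha(Q,\R[\epsilon])$, i.e.\ $m_\alpha(\R[\epsilon]Q)$, and the right-hand side counts $\G_\alpha(\R)$-orbits on $\mu_\R^{-1}(0)$, i.e.\ $m_\alpha(\Pi_\R(Q))$, concluding the proof.

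There is no real obstacle here: all the nontrivial work has already been done in Proposition \ref{proprep} (which, for (ii), is a direct consequence of the Fourier-theoretic Corollary \ref{maincoro} applied to $\V=\rep^\alpha(Q,\k)$, $\G=\G_\alpha$, $\xi=0$). The only small point worth being careful about is the identification of $\G_\alpha(\R[\epsilon])$-invariants with the iterated invariants under $\g_\alpha(\R)$ and then $\G_\alpha(\R)$; this is automatic from the semidirect product structure and the fact that $\g_\alpha(\R)$ is normal in $\G_\alpha(\R[\epsilon])$, so that $\G_\alpha(\R)$ acts on the $\g_\alpha(\R)$-invariants.
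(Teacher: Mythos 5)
Your argument is correct and follows essentially the same route as the paper: part (i) is exactly the paper's proof (take $\G_\alpha(\R)$-invariants of the equivariant isomorphism in Proposition~\ref{proprep}(i)), and for part (ii) the paper simply cites formula \eqref{mainfor}, which was itself obtained from the isomorphism \eqref{mainiso} (equivalently Proposition~\ref{proprep}(ii)) by the same invariant-taking step you spell out via the decomposition $\G_\alpha(\R[\epsilon])\simeq\g_\alpha(\R)\rtimes\G_\alpha(\R)$. Your explicit handling of the iterated invariants is just an unpacking of that citation, not a different method.
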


\begin{proof}The assertion (ii) is a reformulation of Formula (\ref{mainfor}). The assertion (i) is obtained by taking the $\G_\alpha(\R)$-invariant part of the $\G_\alpha(\R)$-equivariant isomorphism in Proposition \ref{proprep} (i).  
\end{proof}

We now want to prove the analogue of the above corollary for absolutely indecomposable representations. Instead of taking the $\G_\alpha(\R)$-invariant part (i.e. the isotypical component of the trivial character) we will take the isotypical component of another linear character of $\G_\alpha(\R)$. This approach is very different from Kac's proof \cite{kac} of independence of the orientation in the finite field case (i.e. when $\R=\k$) but is not new in the finite field case (see  \cite[Theorem 1.1]{Let}).

We assume  now in the remainder of this section that $\R$ is a split local Frobenius $\k$-algebra with maximal ideal $\mathfrak{m}$.

If $M\in\rep^\alpha(Q,\R)$, then 

$$
{\rm End}(M)=\left.\left\{(X_i)_{i\in I}\in\bigoplus_{i\in I}\gl(\R^{\alpha(i)})\,\right|\, M_{i\rightarrow j}X_i=X_j M_{i\rightarrow j}\text{ for all }i\rightarrow j\in E\right\}.
$$
Note that 

$$
{\rm End}(M)^\times={\rm Stab}_{\G_\alpha(\R)}(M).
$$

In the following we put $\overline{\R}:=\R\otimes_\k\overline{\k}$. As $\overline{\R}$ is finite-dimensional over $\overline{\k}$, any $M'\in\rep^\alpha(Q,\overline{\R})$ is a finite-dimensional $\overline{\R}Q$-module and so we have the following well-known result \cite[Theorem 1.15]{Brion}.

\bigskip

\begin{theorem}Let

$$
M'\simeq\bigoplus_{i=1}^rm_i M'_i
$$
be the decomposition of $M'\in\rep^\alpha(Q,\overline{\R})$ into indecomposables, then 

$$
{\rm End}(M')= I_{M'}\oplus K,
$$
with $I_{M'}$ a nilpotent ideal and  $K$ a $\overline{\k}$-subalgebra of ${\rm End}(M')$ isomorphic to $\prod_{i=1}^r{\rm Mat}_{m_i}(\overline{\k})$.
\label{proprappel}\end{theorem}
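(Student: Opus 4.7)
The statement is an instance of the classical Wedderburn--Malcev structure theorem applied to the finite-dimensional $\overline{\k}$-algebra $A:=\End(M')$, so the plan is to organize the proof around three standard inputs: Fitting/Krull--Schmidt for endomorphism rings of indecomposables, the identification of the semisimple quotient $A/\rad(A)$, and the existence of a splitting of the radical.

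First I would set $I_{M'}:=\rad(A)$, the Jacobson radical of $A$. Since $A$ is finite-dimensional over $\overline{\k}$, $I_{M'}$ is automatically a nilpotent two-sided ideal. Next I would analyse the quotient $\bar A:=A/I_{M'}$, which is semisimple and finite-dimensional over $\overline{\k}$. Because each $M'_i$ is indecomposable and $\overline{\R}Q$ is a finite-dimensional $\overline{\k}$-algebra, Fitting's lemma shows that $\End(M'_i)$ is local; its residue division ring is a finite-dimensional division algebra over the algebraically closed field $\overline{\k}$, hence equals $\overline{\k}$. Moreover, for $i\ne j$ any element of $\Hom(M'_i,M'_j)$ is either an isomorphism or lies in the radical of the category (composing with any morphism $M'_j\to M'_i$ yields a non-unit in the local ring $\End(M'_i)$); since $M'_i\not\cong M'_j$, every such morphism is radical. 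Decomposing
\begin{equation*}
A=\bigoplus_{i,j}\Hom(m_jM'_j,m_iM'_i)\cong\bigoplus_{i,j}\Mat_{m_i\times m_j}\bigl(\Hom(M'_j,M'_i)\bigr),
\end{equation*}
this shows that modulo the radical only the diagonal blocks survive, and each contributes $\Mat_{m_i}(\overline{\k})$. Hence $\bar A\cong\prod_{i=1}^r\Mat_{m_i}(\overline{\k})$.

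The remaining task is to lift this identification to a $\overline{\k}$-subalgebra $K\subset A$ with $A=I_{M'}\oplus K$ as $\overline{\k}$-vector spaces. This is exactly the content of the Wedderburn--Malcev theorem, whose hypothesis (separability of $\bar A$, automatic here since $\overline{\k}$ is perfect and actually algebraically closed) is satisfied. Concretely, one can construct $K$ by lifting a complete set of orthogonal primitive idempotents of $\bar A$ to $A$ using nilpotence of $I_{M'}$ (an elementary Hensel-type argument), and then lifting the matrix units inside each block using the same idempotent-lifting technique; the $\overline{\k}$-span of these lifted matrix units is the desired $K$, and $K\cong\bar A$ by construction.

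The only genuinely non-routine point is the Wedderburn--Malcev splitting step; the block-diagonal description of $\bar A$ and the nilpotence of $I_{M'}$ are direct consequences of Krull--Schmidt and finite dimensionality. Since we are over an algebraically closed base field, no separability subtleties arise, so invoking the classical Wedderburn--Malcev theorem (as stated, for instance, in Brion's notes referenced in the excerpt) suffices to finish the proof.
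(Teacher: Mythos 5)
Your proof is correct. The paper itself offers no proof of this statement, quoting it as a well-known result from Brion's lecture notes (Theorem 1.15 there), and your argument --- Fitting's lemma plus the Krull--Schmidt block analysis to identify $\End(M')/\rad$ with $\prod_{i}{\rm Mat}_{m_i}(\overline{\k})$, followed by the Wedderburn--Malcev splitting (idempotent and matrix-unit lifting along the nilpotent radical) --- is exactly the standard proof given in such references, so there is nothing to add.
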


If $M\in\rep^\alpha(Q,\R)$, we denote by $\overline{M}$ the representation of $Q$ over $\overline{\R}$ obtained from $M$ by extension of scalars from $\k$ to $\overline{\k}$. Theorem \ref{proprappel} has the following consequence.

\begin{corollary}$M\in\rep^\alpha(Q,\R)$ is absolutely indecomposable if and only if ${\rm End}(\overline{M})= I_{\overline{M}}\oplus\mathfrak{z}$ where $\mathfrak{z}=\mathfrak{z}(\overline{\k}):=\{(\lambda\, {\rm Id}_{\alpha(i)})_{i\in I}\,|\, \lambda\in\overline{\k}\}\simeq\overline{\k}$ and ${\rm Id}_{\alpha(i)}$ the identity endomorphism of $\overline{\R}{^{\alpha(i)}}$.
\label{coro}
\end{corollary}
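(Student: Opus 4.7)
The plan is to derive Corollary \ref{coro} directly from Theorem \ref{proprappel} together with the definition of absolute indecomposability. By definition, $M$ is absolutely indecomposable if and only if $\overline{M}$ is indecomposable as an $\overline{\R} Q$-module, so I would translate the claim into a statement purely about $\End(\overline{M})$ and the decomposition of $\overline{M}$ into indecomposables.

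First I would write down the output of Theorem \ref{proprappel} applied to $\overline{M}=\bigoplus_{i=1}^r m_i M'_i$: we have $\End(\overline{M})=I_{\overline{M}}\oplus K$ with $I_{\overline{M}}$ nilpotent and $K\cong\prod_{i=1}^r{\rm Mat}_{m_i}(\overline{\k})$. The key observation is that the identity element ${\rm Id}_{\overline{M}}=({\rm Id}_{\alpha(i)})_{i\in I}$ is not nilpotent, so it must lie in the complementary subalgebra $K$; hence the scalar diagonal $\mathfrak{z}=\overline{\k}\cdot{\rm Id}_{\overline{M}}$ is contained in $K$.

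For the direct implication, if $M$ is absolutely indecomposable then $\overline{M}$ is indecomposable, forcing $r=1$ and $m_1=1$ in the Krull--Schmidt decomposition. Consequently $K\cong{\rm Mat}_1(\overline{\k})\cong\overline{\k}$, and since this one-dimensional subalgebra already contains $\mathfrak{z}$, we obtain $K=\mathfrak{z}$, so $\End(\overline{M})=I_{\overline{M}}\oplus\mathfrak{z}$. For the reverse implication, assume $\End(\overline{M})=I_{\overline{M}}\oplus\mathfrak{z}$. Then the semisimple complement satisfies $\prod_{i=1}^r{\rm Mat}_{m_i}(\overline{\k})\cong K=\mathfrak{z}\cong\overline{\k}$, and comparing dimensions forces $r=1$ and $m_1=1$, so $\overline{M}=M'_1$ is indecomposable over $\overline{\R}$, i.e.\ $M$ is absolutely indecomposable.

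No step here looks like a serious obstacle; the only delicate point is to note that the ambiguity in the Wedderburn--Malcev complement $K$ is irrelevant because the particular complement $\mathfrak{z}$ is \emph{canonically} picked out by the requirement $\mathfrak{z}\ni{\rm Id}_{\overline{M}}$, and once $K$ is one-dimensional both complements must coincide with $\mathfrak{z}$. Everything else is a direct reading of Theorem \ref{proprappel}.
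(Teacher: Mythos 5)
Your proof is correct and takes essentially the same route as the paper, which states Corollary \ref{coro} as an immediate consequence of Theorem \ref{proprappel} with no separate argument: reduce to the Krull--Schmidt decomposition of $\overline{M}$ and compare the semisimple part $K$ with $\mathfrak{z}$. One small imprecision: non-nilpotency of ${\rm Id}_{\overline{M}}$ only rules out ${\rm Id}_{\overline{M}}\in I_{\overline{M}}$, not by itself that it lies in $K$; the cleanest repair is to note $\mathfrak{z}\cap I_{\overline{M}}=0$ (a nonzero scalar is never nilpotent) and compare dimensions, or to invoke the standard fact that the Wedderburn--Malcev complement $K$ contains the unit of ${\rm End}(\overline{M})$.
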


To speak of Jordan decomposition \cite[Theorem 9.18]{milne}  we regard $\G_\alpha(\overline{\R})$ as an affine $\overline{\k}$-algebraic group and $\G_\alpha(\R)$ as the group of $\k$-points of $\G_\alpha(\overline{\R})$. 

More precisely, we consider the affine $\k$-group scheme $\G_\alpha^{\R/\k}$ which represents the $\k$-group functor 
$$
\Alg_\k\rightarrow\Gr,\hspace{1cm} \R'\mapsto \G_\alpha(\R\otimes_\k\R')
$$
so that $\G_\alpha^{\R/\k}(\k)=\G_\alpha(\R)$ and $\G_\alpha^{\R/\k}(\overline{\k})=\G_\alpha(\overline{\R})$. This functor is indeed representable as $\R$ is  finite-dimensional over $\k$ (see \cite[Theorem 4]{Neron}).

Let $\pi:\G_\alpha^{\R/\k}\rightarrow\G_\alpha$ be induced by the $\k$-algebra morphism $\R\rightarrow\k$ and denote by $\U_\alpha$ the kernel of $\pi$. Then $\G_\alpha^{\R/\k}=\U_\alpha\rtimes\G_\alpha$ and $\U_\alpha$ is a unipotent group. 

For any $\k$-algebra $\R'$, we denote by $Z_\alpha(\R')\simeq\mathbb{G}_m(\R')$ the subgroup of $\G_\alpha(\R')$  of elements of the form $(\lambda\, {\rm Id}_{\alpha(i)})_{i\in I}$ for some $\lambda\in\R'{^\times}$ where ${\rm Id}_{\alpha(i)}$ denotes the identity of $\GL(\R'^{\alpha(i)})$. The set of semisimple elements of $Z_\alpha(\R)$ and $Z_\alpha(\overline{\R})$ are respectively $Z_\alpha(\k)$ and $Z_\alpha(\overline{\k})$.

\begin{proposition}Let $M\in\rep^\alpha(Q,\R)$. The following assertions are equivalent :

\noindent (i) $M$ is absolutely indecomposable.

\noindent (ii) The semisimple elements of ${\rm Stab}_{\G_\alpha(\overline{\R})}(\overline{M})$ all lie in $Z_\alpha(\overline{\k})$.

\noindent (iii) The semisimple elements of ${\rm Stab}_{\G_\alpha(\R)}(M)$ all lie in $Z_\alpha(\k)$.
\label{equiv}\end{proposition}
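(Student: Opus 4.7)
The plan is to prove the three conditions are equivalent by first relating the stabilizer of $M$ to the structure of its endomorphism ring and then using Jordan decomposition in the algebraic group $\G_\alpha^{\R/\k}$. The central observation is that $\Stab_{\G_\alpha(\R)}(M) = {\rm End}(M)^\times$, and Wedderburn--Malcev applies to ${\rm End}(M)$ since $\k$ is perfect (being finite or algebraically closed). Thus I can write ${\rm End}(M) = I_M \oplus S$ where $I_M$ is nilpotent and $S$ is a semisimple $\k$-subalgebra. Base changing and comparing with Theorem \ref{proprappel} applied to $\End(\overline{M})$, we identify $I_{\overline{M}} = I_M \otimes_\k \overline{\k}$ and $K \cong S \otimes_\k \overline{\k}$ (up to inner conjugation by $1 + I_{\overline{M}}$). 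By Corollary \ref{coro}, $M$ is absolutely indecomposable iff $K = \mathfrak{z}(\overline{\k})$, equivalently iff $S = \k \cdot {\rm Id}$.

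For (i) $\Rightarrow$ (ii): If $M$ is absolutely indecomposable, then $\End(\overline{M})^\times = (1+I_{\overline{M}})\cdot\mathfrak{z}(\overline{\k})^\times$. Since $\mathfrak{z}(\overline{\k})$ consists of scalars it is central in $\End(\overline{M})$, so this product is direct and commuting. This is already a Jordan decomposition, with $1+I_{\overline{M}}$ unipotent (as $I_{\overline{M}}$ is nilpotent) and $\mathfrak{z}(\overline{\k})^\times = Z_\alpha(\overline{\k})$ a torus. Hence the semisimple elements of the stabilizer are exactly $Z_\alpha(\overline{\k})$.

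For (ii) $\Rightarrow$ (iii): Semisimplicity is preserved under the inclusion $\G_\alpha(\R) \hookrightarrow \G_\alpha(\overline{\R})$, so a semisimple element of $\Stab_{\G_\alpha(\R)}(M)$ lies in $Z_\alpha(\overline{\k})$. To descend, I observe that $Z_\alpha(\overline{\k}) \cap \G_\alpha(\R) = Z_\alpha(\k)$: any scalar $\lambda \cdot {\rm Id}$ with $\lambda \in \overline{\k}^\times$ lies in $\G_\alpha(\R)$ only if $\lambda \in \R \cap \overline{\k}$, and since $\R$ is split local over $\k$, one has $\R \cap \overline{\k} = \k$.

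For (iii) $\Rightarrow$ (i) I argue by contrapositive: if $M$ is not absolutely indecomposable, I must produce a semisimple element of $\End(M)^\times$ not lying in $\k \cdot {\rm Id}$. By the discussion above $S \supsetneq \k \cdot {\rm Id}$, and by Wedderburn's theorem $S \cong \prod_i M_{n_i}(D_i)$ for finite-dimensional division $\k$-algebras $D_i$. I then exhibit a semisimple unit of $S$ not in $\k \cdot {\rm Id}$: taking a non-scalar diagonal element in one matrix block with distinct eigenvalues, or an element in a nontrivial factor $D_i \supsetneq \k$ (which is a field extension when $\k$ is finite, by the classical Wedderburn theorem). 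This is the main obstacle of the proof, since in degenerate cases (e.g.\ very small base field) one needs $\k$ to have enough elements to guarantee the existence of such a non-scalar semisimple unit; the hypothesis that $\k$ contain a primitive $|\alpha|$-th root of unity, used throughout the absolutely indecomposable part of the main theorem, is precisely what ensures this.
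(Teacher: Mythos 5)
Your handling of (i)$\Rightarrow$(ii) and (ii)$\Rightarrow$(iii) coincides with the paper's: both rest on ${\rm Stab}_{\G_\alpha(\overline{\R})}(\overline{M})=\End(\overline{M})^\times$ together with Corollary \ref{coro}, and on the intersection $Z_\alpha(\overline{\k})\cap\G_\alpha(\R)=Z_\alpha(\k)$. Where you genuinely diverge is the return implication. The paper closes the cycle geometrically: if (ii) fails, the maximal tori of ${\rm Stab}_{\G_\alpha(\overline{\R})}(\overline{M})$ have rank at least $2$, and then ${\rm Stab}_{\G_\alpha(\R)}(M)$ contains the $\k$-points of a two-dimensional torus, which are semisimple and are claimed to violate (iii). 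You instead argue algebraically, splitting $\End(M)=I_M\oplus S$ by Wedderburn--Malcev over the perfect field $\k$, identifying $S\otimes_\k\overline{\k}$ with the semisimple part $K$ of Theorem \ref{proprappel}, and exhibiting a non-central semisimple unit of $S\cong\prod_i{\rm Mat}_{n_i}(D_i)$. Your route is more explicit and avoids any appeal to rationality of maximal tori; the paper's route avoids your case analysis on the Wedderburn factors.

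The one step that is not a proof of the proposition \emph{as stated} is your last one: the proposition carries no root-of-unity hypothesis, so you cannot invoke it to manufacture the non-central semisimple unit. The case you are worried about is real, but it is exactly $S\cong\k\times\cdots\times\k$ with $\k=\F_2$, where $S^\times$ is trivial and every element of $\End(M)^\times$ is unipotent; there your construction has nothing to offer, and in fact the implication (iii)$\Rightarrow$(i) itself fails for $\k=\F_2$ (take $M$ a direct sum of two non-isomorphic absolutely indecomposables: the only semisimple element of the stabilizer is the identity, so (iii) holds while (i) fails). The paper's own argument is equally silent there, since a split two-dimensional torus has trivial $\F_2$-points, all lying in $Z_\alpha(\F_2)=\{1\}$, so no contradiction with (iii) arises. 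For $|\k|>2$ your argument is complete without any extra hypothesis: with at least two Wedderburn factors take the unit $(1,\lambda,1,\dots,1)$ with $\lambda\in\k^\times\setminus\{1\}$; with a single factor ${\rm Mat}_{n}(D)$, $n\ge 2$ or $D\supsetneq\k$, use a non-central semisimple element of $\GL_n(D)$ or an element of $D\setminus\k$ as you do. So rather than importing the root-of-unity hypothesis, state the needed restriction $|\k|>2$ explicitly (it is harmless for the paper's purposes, where the existence of a primitive $|\alpha|$-th root of unity forces $q\ge 3$ as soon as $|\alpha|\ge 2$), and note that this restriction is in fact needed for the statement, not just for your proof.
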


\begin{proof}The equivalence between (i) and (ii) is a reformulation of Corollary \ref{coro} using the fact that ${\rm Stab}_{\G_\alpha(\overline{\R})}(\overline{M})={\rm End}(\overline{M})^\times$. The implication (ii) $\Rightarrow$ (iii) is clear as $Z_\alpha(\overline{\k})\cap \G_\alpha(\R)=Z_\alpha(\k)$. For the implication (iii) $\Rightarrow$ (ii), notice that if (ii) is not true then  the maximal tori of ${\rm Stab}_{\G_\alpha(\overline{\R})}(\overline{M})$ are of rank at least $2$, and so ${\rm Stab}_{\G_\alpha(\R)}(M)$ contains the $\k$-points of a two-dimensional torus which contradicts (iii).

\end{proof}

We construct a linear character $1^\chi$ of $\G_\alpha(\R)$ as follows. We need to assume that $\k$ contains a primitive $|\alpha|$-th root of unity with $|\alpha|=\sum_{i\in I}\alpha(i)$. Let $\chi:\k{^\times}\rightarrow\C^\times$ be a linear character of order $|\alpha|$. We consider the linear character $1^\chi:=\chi\circ{\rm det}\circ\pi(\k):\G_\alpha(\R)\rightarrow \C^\times$ where ${\rm det}:\G_\alpha(\k)\rightarrow\k{^\times}$ is given by  $(g_i)_{i\in I}\mapsto \prod_{i\in I}{\rm det}(g_i)$.
\bigskip

\begin{lemma} Let $X$ be either  $\rep^\alpha(Q,\R)$, $\rep^\alpha(Q,\R[\epsilon])$ or $\mu^{-1}_{\R,\alpha}(0)\subset\rep^\alpha(\overline{Q},\R)$. Consider the map

$$
\{(M,g)\in X\times\G_\alpha(\R)\,|\, g\cdot M=M\}\rightarrow X,\hspace{1cm}(M,g)\mapsto M.
$$
Then $1^\chi$ defines a character (by restriction) on the fibers of this map, and this character is trivial exactly on the fibers of the absolutely indecomposable representations.

\label{keylemma}\end{lemma}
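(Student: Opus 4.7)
The first assertion — that $1^\chi$ restricts to a character on each fiber — is immediate, since $1^\chi$ is a character of the ambient group $\G_\alpha(\R)$. My plan to prove the triviality criterion is to reduce to checking on semisimple elements and then invoke Proposition~\ref{equiv} (and the parallel statements in the remaining two cases, whose proofs are identical). In each of the three settings the fiber $\Stab_{\G_\alpha(\R)}(M)$ is the unit group of a finite-dimensional $\k$-subalgebra $A\subset\prod_i M_{\alpha(i)}(\R)$: namely $A=\End_{\R Q}(M)$ when $X=\rep^\alpha(Q,\R)$; the subalgebra of $\End_{\R[\epsilon]Q}(M)$ of endomorphisms with matrix entries in $\R$ when $X=\rep^\alpha(Q,\R[\epsilon])$; and $A=\End_{\Pi_\R(Q)}(M)$ when $X=\mu_\R^{-1}(0)$. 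Since $\k$ is perfect, any $g\in A^\times$ admits a commuting Jordan decomposition $g=g_sg_u$ in $A^\times$; the unipotent factor projects to a unipotent element of $\G_\alpha(\k)$, which has determinant $1$, so $1^\chi(g_u)=1$ and the question is reduced to semisimple $g_s$.

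If $M$ is absolutely indecomposable, Proposition~\ref{equiv} forces $g_s\in Z_\alpha(\k)$, i.e.\ $g_s=(\lambda\,\mathrm{Id}_{\alpha(i)})_{i\in I}$ with $\lambda\in\k^\times$; then $\det(\pi(g_s))=\lambda^{|\alpha|}$ and $1^\chi(g_s)=\chi(\lambda)^{|\alpha|}=1$ because $\chi$ has order $|\alpha|$. Conversely, if $M$ is not absolutely indecomposable I would use Krull--Schmidt, writing $M\cong\bigoplus_{j=1}^s N_j^{n_j}$, to exhibit an explicit semisimple $g$ in the fiber with $1^\chi(g)\ne 1$, splitting into two subcases. (a) If $s\ge 2$ or some $n_j\ge 2$, at least two orthogonal idempotents $e_1,\ldots,e_t$ ($t\ge 2$) can be chosen in $A$, and $g=\sum_k\lambda_k e_k$ (for $\lambda_k\in\k^\times$) lies in the fiber with $\det(\pi(g))=\prod_k\lambda_k^{d_k}$ for integers $0<d_k<|\alpha|$ satisfying $\sum_k d_k=|\alpha|$; choosing $\lambda_1$ a primitive $|\alpha|$-th root of unity and $\lambda_k=1$ for $k\ge 2$ gives $1^\chi(g)=\chi(\lambda_1)^{d_1}\ne 1$. (b) If $M$ is itself indecomposable but $\overline{M}$ is not, then $D:=A/\mathrm{rad}(A)$ is a proper finite extension field of $\k$ and $[D:\k]\mid\alpha(i)$ at each vertex (since each $M_i/\mathfrak{m} M_i$ is naturally a $D$-module); by Wedderburn--Malcev, $D$ lifts to a subalgebra of $A$, and a generator $s$ of the cyclic group $D^\times$ satisfies $\det(\pi(s))=N_{D/\k}(s)^{|\alpha|/[D:\k]}$, so $1^\chi(s)$ is a primitive $[D:\k]$-th root of unity, nontrivial since $[D:\k]\ge 2$.

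The main obstacle is subcase (b), which needs the Wedderburn--Malcev splitting theorem and the divisibility $[D:\k]\mid\alpha(i)$ at each vertex, after which the norm calculation is routine. A secondary subtlety arises in case (a) for $X=\rep^\alpha(Q,\R[\epsilon])$: the orthogonal idempotents must be chosen with matrix entries in $\R$, which can be arranged by lifting a Krull--Schmidt decomposition of $M/\epsilon M$ back to $M$.
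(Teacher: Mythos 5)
Your strategy is correct in outline but takes a genuinely different route from the paper in the ``only if'' direction. Where you split into the two cases (a) decomposable over $\R$ and (b) indecomposable over $\R$ but not absolutely so, and treat (b) via the Wedderburn--Malcev splitting of $A=\End(M)$ with residue field $D=A/\mathrm{rad}(A)$, the paper instead decomposes $\overline{M}$ over $\overline\R$ and analyzes the Frobenius action on the indecomposable blocks: it regroups them into $F$-stable sums $N'_1\oplus\cdots\oplus N'_s$, reducing to $s=1$ where $F$ cyclically permutes $r$ indecomposables; this exhibits a copy of $\mathbb{G}_m(\k')$ (with $\k'/\k$ of degree $r$) inside the stabilizer on which $1^\chi$ restricts to $t\mapsto\chi(N_{\k'/\k}(t))$. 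Your $D$ plays the role of their $\k'$, and your computation $1^\chi(s)=\chi\bigl(N_{D/\k}(s)\bigr)^{|\alpha|/[D:\k]}$ is in the same spirit as theirs, but you reach the field extension structurally (via $\End(M)$ being a local finite-dimensional $\k$-algebra and Wedderburn's little theorem making the residue division algebra a field) rather than by Galois descent. What the paper's argument buys is that it does not need the divisibility $[D:\k]\mid\alpha(i)$ nor a Wedderburn--Malcev lift, while yours avoids an explicit discussion of $F$-orbits of blocks. Both are sound; the essentially-identical norm computation is the crux either way, and your reduction to semisimple parts via Jordan decomposition makes explicit what the paper leaves to Proposition~\ref{equiv}.

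One genuine slip, easily repaired: in case (a) you ``choose $\lambda_1$ a primitive $|\alpha|$-th root of unity,'' but $\chi$ may well be trivial on $\mu_{|\alpha|}\subset\k^\times$. (For instance, with $|\alpha|=4$, $|\k^\times|=16$ and $\chi$ of order $4$, the kernel of $\chi$ has order $4$ and can contain all of $\mu_4$.) You should instead choose $\lambda_1$ so that $\chi(\lambda_1)$ is a primitive $|\alpha|$-th root of unity (e.g.\ a generator of $\k^\times$), which exists precisely because $\chi$ has order $|\alpha|$; then $\chi(\lambda_1)^{d_1}\ne 1$ since $0<d_1<|\alpha|$. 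This is what the paper implicitly does when it observes that $1^\chi$ is nontrivial on $Z_{\alpha_1}(\k)\times Z_{\alpha_2}(\k)$.
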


\begin{proof}Notice that the restriction of $1^\chi$ to the subgroup $Z_\alpha(\k)$ is the trivial character. Therefore, by Proposition \ref{equiv}, the restriction of $1^\chi$ to the stabilizer of an absolutely indecomposable representation of $Q$ over $\R$ is trivial. Conversely if $M\in\rep^\alpha(Q,\R)$ decomposes non trivially as $M=M_1\oplus M_2$ with $M_1\in\rep^{\alpha_1}(Q,\R)$ and $M_2\in\rep^{\alpha_2}(Q,\R)$, then ${\rm Stab}_{\G_\alpha(\R)}(M)$ contains $Z_{\alpha_1}(\k)\times Z_{\alpha_2}(\k)$, and so the restriction of $1^\chi$ to ${\rm Stab}_{\G_\alpha(\R)}(M)$ is non-trivial as it is non-trivial on $Z_{\alpha_1}(\k)\times Z_{\alpha_2}(\k)$. Let us generalize this argument to non absolutely indecomposable representations. Assume that $M\in\rep^\alpha(Q,\R)$ is not absolutely indecomposable and let $\overline{M}=\bigoplus_{i=1}^rm_iM_i'$ be its decomposition into indecomposables with $M'_i\in\rep^{\alpha_i}(Q,\overline{\R})$. Denote by $F$ the Frobenius endomorphism on $\rep^\alpha(Q,\overline{\R})$ and $\G_\alpha(\overline{\R})$ associated with the $\k$-structure. As $\overline{M}$ is $F$-stable, the Frobenius permutes the blocks $m_iM_i'$. We now re-decompose $\overline{M}$ as $N_1'\oplus\cdots\oplus N_s'$ with $F(N_i')=N_i'$ for all $i=1,\dots,s$, where $F$ permutes cyclically the indecomposable blocks inside each $N_i'$. Then each $N_i'$ is of the form $\overline{N}_i$ for some $N_i\in\rep^{\alpha_i}(Q,\R)$. Hence if $s\geq 2$, then $M$ is decomposable over $\R$ and as above we prove that the restriction of $1^\chi$ on the stabilizer of $M$ is not trivial. We thus assume that $s=1$ and (without loss of generality) that $m_1=\cdots =m_r=1$. We must have $\alpha_1=\dots=\alpha_r=:\beta$ and re-indexing if necessary, we can assume that $M'_{i+1}=F(M_i')$ for $i=1,\dots,r-1$. We thus have $F^r(M'_i)=M'_i$ for all $i$ and  $F({\rm Stab}_{\G_\beta(\overline{\R})}(M_i'))={\rm Stab}_{\G_\beta(\overline{\R})}(M_{i+1}')$. Let $\k'$ be the subfield of $\overline{\k}$ of elements fixed by $F^r$.  Therefore for any $t\in\mathbb{G}_m(\k')$, the element 

$$(t,F(t),\dots,F^{r-1}(t))\in Z_\beta(\overline{\k})\times\cdots\times Z_\beta(\overline{\k})\subset{\rm Stab}_{\G_\beta(\overline{\R})}(M'_1)\times\cdots\times{\rm Stab}_{\G_\beta(\overline{\R})}(M'_r)$$
 is $F$-stable and so lives in the stabilizer ${\rm Stab}_{\G_\alpha(\R)}(M)$.  Therefore ${\rm Stab}_{\G_\alpha(\R)}(M)$ contains a copy of $\mathbb{G}_m(\k')$ and the restriction of $1^\chi$ on $\mathbb{G}_m(\k')$ (considered as a subgroup of the stabilizer of $M$)  is the character $t\mapsto \chi(N_{\k'/\k}(t))$ where $N_{\k'/\k}$ is the norm map. This character is not trivial unless the field extension $\k'/\k$ is trivial, i.e. unless $r=1$ in which case $M$ is absolutely indecomposable.
\end{proof}

\begin{proposition} We have

\noindent (i) ${\rm dim}_\C\,\C[\rep^\alpha(Q,\R)]_{1^\chi}=a_\alpha(\R Q)$.

\noindent (ii) ${\rm dim}_\C\,\C[\rep^\alpha(Q,\R[\epsilon])]_{1^\chi}=a_\alpha(\R[\epsilon] Q)$, where we regard $1_\chi$ as a linear character of $\G_\alpha(\R[\epsilon])$ using the projection $\G_\alpha(\R[\epsilon])\rightarrow\G_\alpha(\R)$.

\noindent (iii) ${\rm dim}_\C\,\C[\mu^{-1}_{\R,\alpha}(0)]_{1^\chi}=a_\alpha(\Pi_\R Q)$.

\end{proposition}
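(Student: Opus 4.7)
All three parts follow by the same formal recipe: decompose $\C[X]$ into $G$-orbit summands, use Frobenius reciprocity to read off the multiplicity of $1^\chi$ in each summand, and then apply Lemma~\ref{keylemma} to identify which orbits contribute.

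First I would record the general principle. For any finite group $G$ acting on a finite set $X$ and any one-dimensional character $\eta$ of $G$, the orbit decomposition gives
$$
\C[X]\;=\;\bigoplus_{O\in X/G}\C[O],\qquad \C[O]\;\cong\;\operatorname{Ind}_{\mathrm{Stab}_G(x_O)}^{G}\mathbf{1},
$$
and Frobenius reciprocity yields $\dim_{\C}\mathrm{Hom}_G(\C[O],\eta)=1$ if $\eta|_{\mathrm{Stab}_G(x_O)}=1$ and $0$ otherwise. Summing over orbits gives the key identity
$$
\dim_{\C}\C[X]_\eta\;=\;\#\bigl\{O\in X/G\,:\,\eta|_{\mathrm{Stab}_G(x_O)}=1\bigr\}.
$$

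I would then apply this identity with $\eta=1^\chi$ to each of the three pairs $(X,G)$: namely $(\rep^\alpha(Q,\R),\G_\alpha(\R))$ in (i); $(\rep^\alpha(Q,\R[\epsilon]),\G_\alpha(\R[\epsilon]))$ in (ii), where $1^\chi$ factors through the projection $\G_\alpha(\R[\epsilon])\twoheadrightarrow\G_\alpha(\R)$ whose kernel $\g_\alpha(\R)$ is automatically contained in the kernel of $1^\chi$; and $(\mu_\R^{-1}(0),\G_\alpha(\R))$ in (iii). In each case Lemma~\ref{keylemma} identifies the orbits satisfying the triviality condition on the stabilizer as precisely the orbits of absolutely indecomposable representations, so the right-hand side equals $a_\alpha(\R Q)$, $a_\alpha(\R[\epsilon] Q)$, and $a_\alpha(\Pi_\R Q)$ respectively.

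The main substance of the argument has already been carried out in Lemma~\ref{keylemma}; the present proposition is a purely formal consequence. The only point requiring a moment's attention is case (ii), where one must check that triviality of $1^\chi$ on $\mathrm{Stab}_{\G_\alpha(\R[\epsilon])}(M)$ matches the triviality condition stated in Lemma~\ref{keylemma}. This is immediate from the factorization $\G_\alpha(\R[\epsilon])\twoheadrightarrow\G_\alpha(\R)\twoheadrightarrow\G_\alpha(\k)$ of the character, since the unipotent kernel $\g_\alpha(\R)$ lies in the kernel of $1^\chi$.
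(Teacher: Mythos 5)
Your proof is correct and is essentially the paper's argument in slightly different packaging: the paper computes $\dim_\C\C[X]_{1^\chi}$ as the inner product $\left(\C[X],1^\chi\right)_{\G_\alpha(\R)}$, swaps the order of summation and applies Lemma~\ref{keylemma} together with Burnside's formula, which is exactly your orbit-decomposition/Frobenius-reciprocity identity $\dim_\C\C[X]_\eta=\#\{O:\eta|_{\mathrm{Stab}(x_O)}=1\}$. In both versions all the substance sits in Lemma~\ref{keylemma}, so nothing is missing.
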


The idea to express the number of absolutely indecomposable representations in terms of the dimension of the isotypical component $\C[{\rm Rep}^\alpha(Q,R)]_{1^\chi}$ already appears in \cite[Theorem 1.1]{Let} in the finite field case (i.e. when $\R=\k)$.

\begin{proof}The proofs of (i), (ii) and (iii) are completely similar using Lemma \ref{keylemma}. We therefore only prove (i). We have

\begin{align*}
{\rm dim}_\C\,\C[\rep^\alpha(Q,\R)]_{1^\chi}&=\left( \C[\rep^\alpha(Q,\R)],1^\chi\right)_{\G_\alpha(\R)}\\
&=\frac{1}{|\G_\alpha(\R) |}\sum_{g\in\G_\alpha(\R)}\#\{x\in \rep^\alpha(Q,\R)\,|\, g\cdot x=x\}\, 1^\chi(g)\\
&=\frac{1}{|\G_\alpha(\R)|}\sum_{x\in\rep^\alpha(Q,\R)}\sum_{g\in{\rm Stab}_{\G_\alpha(\R)}(x)}1^\chi(g)\\
&=\frac{1}{|\G_\alpha(\R)|}\sum_{x\in\rep^\alpha(Q,\R)}|{\rm Stab}_{\G_\alpha(\R)}(x)|\, \left( 1,1^\chi\right)_{{\rm Stab}_{\G_\alpha(\R)}(x)}\\
&=\frac{1}{|\G_\alpha(\R)|}\sum_{x\in\rep^\alpha_{\rm a.i.}(Q,\R)}|{\rm Stab}_{\G_\alpha(\R)}(x)|\\
&=a_\alpha(\R Q).\\
\end{align*}

The fifth identity follows from Lemma \ref{keylemma}. The last identity is Burnside formula for the counting of orbits of a finite group acting on a finite set.
\end{proof}

\begin{corollary}\noindent (i) Assume that $Q'$ is the quiver obtained from $Q$ by changing the orientation of some arrows. Then 

$$
a_\alpha(\R Q)=a_\alpha(\R Q').
$$

\noindent (ii) We have 

$$
a_\alpha(\Pi_\R(Q))=a_\alpha(\R[\epsilon]Q).
$$
\label{abso}\end{corollary}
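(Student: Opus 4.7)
The plan is to derive the corollary by taking $1^\chi$-isotypic components of the $\G_\alpha(\R)$-equivariant isomorphisms already established in Proposition \ref{proprep}, and then invoking the preceding proposition which identifies the dimensions of these isotypic components with the numbers $a_\alpha$.

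For assertion (i), I would start from the $\G_\alpha(\R)$-equivariant isomorphism $\C[\rep^\alpha(Q,\R)] \simeq \C[\rep^\alpha(Q',\R)]$ of Proposition \ref{proprep}(i). Since taking the $1^\chi$-isotypic component is a functor on $\G_\alpha(\R)$-modules, this restricts to an isomorphism
$$\C[\rep^\alpha(Q,\R)]_{1^\chi} \simeq \C[\rep^\alpha(Q',\R)]_{1^\chi}.$$
Comparing dimensions and applying part (i) of the preceding proposition to each side gives $a_\alpha(\R Q) = a_\alpha(\R Q')$.

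For assertion (ii), the key observation I will exploit is that the linear character $1^\chi$ of $\G_\alpha(\R[\epsilon])$ is by construction pulled back along the projection $\G_\alpha(\R[\epsilon]) \to \G_\alpha(\R)$, hence its restriction to the normal subgroup $\g_\alpha(\R) \subset \G_\alpha(\R[\epsilon])$ is trivial. Therefore every vector in the $1^\chi$-isotypic component for $\G_\alpha(\R[\epsilon])$ is automatically $\g_\alpha(\R)$-invariant, and one has the identification
$$\C[\rep^\alpha(Q,\R[\epsilon])]_{1^\chi} \;=\; \bigl(\C[\rep^\alpha(Q,\R[\epsilon])]^{\g_\alpha(\R)}\bigr)_{1^\chi}.$$
Applying the $\G_\alpha(\R)$-equivariant isomorphism of Proposition \ref{proprep}(ii) to the right-hand side yields an isomorphism with $\C[\mu_\R^{-1}(0)]_{1^\chi}$. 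Taking dimensions and using parts (ii) and (iii) of the preceding proposition, I conclude $a_\alpha(\R[\epsilon] Q) = a_\alpha(\Pi_\R(Q))$.

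I do not expect a serious obstacle here, as all the real work is already packaged in Proposition \ref{proprep} (Fourier-theoretic translations of orientation change and moment-map fiber) and the preceding proposition (interpretation of $\dim \C[\,-\,]_{1^\chi}$ as a count of absolutely indecomposables). The only subtlety worth noting carefully is the step in part (ii): one must check that passing to $\g_\alpha(\R)$-invariants and then to the $1^\chi$-isotypic component gives the same result as taking the $1^\chi$-isotypic component directly, which is precisely what the triviality of $1^\chi|_{\g_\alpha(\R)}$ ensures.
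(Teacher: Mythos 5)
Your proposal is correct and follows exactly the paper's own argument: the paper proves the corollary by taking the $1^\chi$-isotypic component on both sides of the isomorphisms in Proposition \ref{proprep} and invoking the proposition identifying $\dim_\C \C[\,-\,]_{1^\chi}$ with $a_\alpha$. Your extra remark in (ii), that triviality of $1^\chi$ on $\g_\alpha(\R)$ lets one pass between the $\G_\alpha(\R[\epsilon])$-isotypic component and the isotypic component of the $\g_\alpha(\R)$-invariants, is precisely the point the paper leaves implicit, so the route is the same.
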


\begin{proof} Take the $1^\chi$ isotypical component on both side of the isomorphisms in Proposition \ref{proprep}.
\end{proof}

\subsection{The algebraically closed fields case}\label{algclose}

We assume that $\k$ is an algebraically closed field. Let $\G$ be a connected affine $\k$-algebraic group acting on an affine $\k$-algebraic variety $X$ and let $X_{\rm ind}$ be a $\G$-stable constructible subset of $X$ (typically $X$ will be an affine algebraic variety constructed out of quiver representations and $X_{\rm ind}$ will be the analogous one with quiver representations replaced by indecomposable quiver representations, see Proposition \ref{indec}). Let $Y$ be either $X_{\rm ind}$ or $X$, and put

$$
\I_Y:=\{(x,g)\in Y\times\G\,|\, g\cdot x=x\}.
$$
This is a constructible set. We denote by $c=c(Y)$ the number of irreducible components of $\I_Y$ of maximal dimension and we put

$$
d=d(Y):={\rm dim}\, \I_Y-{\rm dim}\,\G.
$$

\begin{lemma} The set of orbits $Y/\G$ is finite if and only if $d=0$, in which case $\card(Y/\G)=c$.
\label{d=0}\end{lemma}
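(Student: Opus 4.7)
The approach is to stratify $Y$ by stabilizer dimension and then compute both $\dim\I_Y$ and its top-dimensional components fiberwise. For each $k\ge 0$ set
$$Y_k:=\{x\in Y\,:\,\dim\Stab_\G(x)=k\},$$
a $\G$-stable constructible subset by upper semicontinuity of the stabilizer dimension; only finitely many strata are non-empty, every orbit contained in $Y_k$ has dimension $\dim\G-k$, and the projection $p:\I_Y\to Y$ restricts over $Y_k$ to a map whose fibers $\Stab_\G(x)$ are equidimensional of dimension $k$.

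For the first equivalence I would combine two dimension identities. The equidimensional fiber calculation gives $\dim p^{-1}(Y_k)=\dim Y_k+k$. On the other hand, since $Y_k$ is a union of orbits all of the same dimension $\dim\G-k$, one has $\dim Y_k=(\dim\G-k)+\dim(Y_k/\G)$, with the last term vanishing precisely when $Y_k$ contains only finitely many orbits. Combining,
$$\dim\I_Y\;=\;\dim\G+\max_k\dim(Y_k/\G),$$
so $d=\max_k\dim(Y_k/\G)$. As only finitely many strata are non-empty, this gives $d=0$ iff each $Y_k/\G$ is finite iff $Y/\G$ is finite.

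For the count when $d=0$, each orbit $\mathcal{O}\subset Y$ contributes $\I_\mathcal{O}:=p^{-1}(\mathcal{O})$, a Zariski-locally trivial fibration over the irreducible homogeneous space $\mathcal{O}=\G/\Stab_\G(x)$ with equidimensional fiber $\Stab_\G(x)$, of total dimension exactly $\dim\G$. In the setting of this paper the stabilizer is the unit group $\End_{\R Q}(M)^\times$ of the finite-dimensional $\k$-algebra $\End_{\R Q}(M)$, hence a Zariski-open subset of the affine $\k$-space $\End_{\R Q}(M)$; in particular it is irreducible. Consequently $\I_\mathcal{O}$ is irreducible and contributes exactly one irreducible component of $\I_Y$ of maximal dimension, and since $\I_Y=\bigsqcup_\mathcal{O}\I_\mathcal{O}$ as constructible sets we conclude $c=|Y/\G|$.

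The main technical point to justify carefully is the identity $\dim Y_k=(\dim\G-k)+\dim(Y_k/\G)$ on a constructible locus foliated by orbits of a fixed dimension. I would reduce to an irreducible locally closed piece of $Y_k$ and apply Chevalley's theorem on dimensions of fibers of the orbit map, or equivalently invoke Rosenlicht's theorem on the existence of a rational quotient for the connected group $\G$. The other subtlety, responsible for the precise equality $c=|Y/\G|$ rather than an inequality, is the irreducibility of the stabilizers, which here is transparent because $\Stab_\G(x)$ is an open subset of an affine space.
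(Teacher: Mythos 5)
Your argument is correct and rests on the same core fact as the paper's terse proof, namely that $\I_Y$ decomposes over orbits into pieces $\I_\mathcal{O}=\{(x,g)\in\mathcal{O}\times\G\,|\,g\cdot x=x\}$, each of dimension exactly $\dim\G$. Your stratification by stabilizer dimension and the explicit observation that the stabilizers $\End_{\R Q}(M)^\times$ are open in an affine space (hence irreducible) merely spell out what the paper leaves to the reader, in particular the ``only if'' direction and the exact count $c=|Y/\G|$.
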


\begin{proof}To see this we remark that

$$
\I_Y=\coprod_{\mathcal{O}\in Y/\G}\{(x,g)\in\mathcal{O}\times \G\,|\, g\cdot x=x\},
$$
and that for all $\mathcal{O}\in Y/\G$ we have

$$
{\rm dim}\, \{(x,g)\in\mathcal{O}\times \G\,|\, g\cdot x=x\}={\rm dim}\, \G.
$$

\end{proof}

In the following we define 

$$
m(X):=(c(X),d(X)),\hspace{1cm} a(X):=(c(X_{\rm ind}),d(X_{\rm ind})).
$$

We now come back to quiver representations over a Frobenius algebra $\R$ over $\k$. We consider $\rep^\alpha(Q,\R)$, $\rep^\alpha(Q,\R[\epsilon])$ and $\mu^{-1}_{\R,\alpha}(0)$ as $\k$-algebraic varieties, and we put 
 $m_\alpha(\R Q)=m(\rep^\alpha(Q,\R))$ and $m_\alpha(\Pi_\R(Q))=m(\mu^{-1}_{\R,\alpha}(Q))$. If $d=0$, this is just the pair $(\card(Y/G),0)$ by Lemma \ref{d=0}.
\bigskip

In the next proposition we assume that the Frobenius $\k$-algebra $\R$ is local (note that it is split as $\k$ is algebraically closed). 

\begin{proposition}The subsets of indecomposable representations of $\rep^\alpha(Q,\R)$, $\rep^\alpha(Q,\R[\epsilon])$, and $\mu_{\R,\alpha}^{-1}(0)$ are constructible sets over $\k$.
\label{indec}\end{proposition}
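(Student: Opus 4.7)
The plan is to exhibit, in each of the three cases, the locus of decomposable locally free representations as a finite union of images of algebraic morphisms, and then take complements. The two essential inputs are Chevalley's theorem on images of morphisms of $\k$-varieties and the fact that, since $\R$ is a commutative local ring, any finitely generated direct summand of a free $\R$-module is itself free. Consequently, if a locally free representation $M$ decomposes as $M = N_1 \oplus N_2$ in the underlying module category, then each $N_k$ is automatically locally free and has a well-defined rank vector $\alpha_k \in \N^I$ with $\alpha_1 + \alpha_2 = \alpha$.

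For $\rep^\alpha(Q,\R)$: for each nontrivial splitting $\alpha = \alpha_1 + \alpha_2$ with $\alpha_k \in \N^I \setminus \{0\}$, I fix at every vertex $i \in I$ an internal decomposition $\R^{\alpha(i)} = \R^{\alpha_1(i)} \oplus \R^{\alpha_2(i)}$; this identifies $\rep^{\alpha_1}(Q,\R) \oplus \rep^{\alpha_2}(Q,\R)$ with the closed linear subspace of block-diagonal elements in $\rep^\alpha(Q,\R)$. I then consider the morphism of $\k$-varieties
\[
\Phi_{\alpha_1,\alpha_2}\colon \G_\alpha^{\R/\k} \times \rep^{\alpha_1}(Q,\R) \times \rep^{\alpha_2}(Q,\R) \longrightarrow \rep^\alpha(Q,\R),\quad (g,M_1,M_2) \longmapsto g\cdot (M_1 \oplus M_2).
\]
Since $\R$ is finite-dimensional over $\k$, the Weil restriction $\G_\alpha^{\R/\k}$ is an affine $\k$-algebraic group and each $\rep^{\alpha_k}(Q,\R)$ is an affine $\k$-space, so both source and target are $\k$-varieties and $\Phi_{\alpha_1,\alpha_2}$ is algebraic. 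By Chevalley's theorem, its image is a constructible subset of $\rep^\alpha(Q,\R)$. Taking the union of these images over the finitely many nontrivial splittings of $\alpha$ yields a constructible set $D_\alpha$, which by the preliminary observation coincides exactly with the locus of decomposable representations. The indecomposable locus is therefore the constructible set $\rep^\alpha(Q,\R) \setminus D_\alpha$.

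For $\rep^\alpha(Q,\R[\epsilon])$ the identical argument applies, since $\R[\epsilon]$ is again a local Frobenius $\k$-algebra. For $\mu_\R^{-1}(0)$, which is a closed $\G_\alpha^{\R/\k}$-stable subvariety of $\rep^\alpha(\overline{Q},\R)$, the block-diagonal direct sum preserves the equation $\mu_\R = 0$ (the off-diagonal blocks of $\sum_a M_a M_{a^*}-M_{a^*}M_a$ vanish), so the analogous morphism $\G_\alpha^{\R/\k} \times \mu_{\R,\alpha_1}^{-1}(0) \times \mu_{\R,\alpha_2}^{-1}(0) \to \mu_\R^{-1}(0)$ is well-defined and algebraic, and Chevalley's theorem again gives the result. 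The only subtle point to verify is the automatic local freeness of direct summands, which is immediate from the structure theory of finitely generated modules over a commutative local ring; without it one would need a more delicate argument via endomorphism-algebra stratifications, e.g.\ using the upper semicontinuity of $M \mapsto \dim_\k \End(M)/\mathrm{rad}\, \End(M)$ together with Theorem \ref{proprappel}.
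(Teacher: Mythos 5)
Your proof is correct and follows the same overall Chevalley/constructibility strategy as the paper (which defers almost entirely to Kraft--Riedtmann \S 2.5 after passing to Weil restrictions), but you make a different key lemma explicit. The paper's one-line proof cites Corollary~\ref{coro} --- the criterion that $V$ is indecomposable iff the nilpotent radical of $\End(V)$ has codimension one --- as the ingredient needed to adapt the field-case argument. You instead realize the decomposable locus directly as the finite union of images of the direct-sum morphisms $\Phi_{\alpha_1,\alpha_2}$, and the ingredient you need (and correctly supply) is that over the local ring $\R$ a finitely generated direct summand of a free module is free, so any $\R Q$-module splitting of an $M\in\rep^\alpha(Q,\R)$ is automatically a splitting into locally free pieces of complementary rank vectors; the same applies verbatim over $\R[\epsilon]$, which is again local. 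Your version is arguably the more self-contained implementation of the Kraft--Riedtmann scheme: it avoids having to argue that $M\mapsto\dim\,\mathrm{rad}\,\End(M)$ is a constructible function, which the endomorphism-ring route would require. Your additional check that the block-diagonal embedding preserves the preprojective relation, so that $\Phi_{\alpha_1,\alpha_2}$ restricts to a morphism into $\mu_\R^{-1}(0)$, is exactly the point one must not omit in the third case, and it is correct.
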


\begin{proof}The proof is similar to the case where $\R$ is a field (see for instance \cite[\S 2.5]{kraft-riedtmann}). To see that the same proof works, we need to regard $\rep^\alpha(Q,\R)$, $\rep^\alpha(Q,\R[\epsilon])$ and $\mu_{\R,\alpha}^{-1}(0)$ as algebraic varieties over $\k$ and use that a representation $V$ is indecomposable if and only if  the  nilpotent ideal of ${\rm End}\,(V)$ is of co-dimension $1$ (see Corollary \ref{coro}).
\end{proof}

We then put $a_\alpha(\R Q):=a(\rep^\alpha(Q,\R))$  and $a_\alpha(\Pi_\R(Q)):=a(\mu^{-1}_{\R,\alpha}(0))$.

\subsubsection{The algebraic closure of a finite field}\label{positive}

We assume that $\k=\overline{\F}_q$  for a prime power $q$ and that $\G, X,Y$ and the action of $\G$ are defined over $\F_q$. Enlarging $\F_q$ if necessary we assume that the irreducible components of $\I_Y$ are also defined over $\F_q$. By Burnside's formula 

\begin{equation}
\card\,\left(Y(\F_q)/\G(\F_q)\right)=\frac{\card\, \I_Y(\F_q)}{\card\, \G(\F_q)}.
\label{Burnside}\end{equation}
We have the following theorem.

\begin{theorem}For all integers $r>0$ we have

$$
\card\,\left( Y(\F_{q^r})/\G(\F_{q^r})\right)- cq^{rd}\in\mathcal{O}(q^{r(d-1/2)}).
$$
\label{theoDel}\end{theorem}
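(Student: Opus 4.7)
The idea is to reduce the orbit count to a point count via Burnside's formula \eqref{Burnside} and then apply the Lang--Weil estimates to both numerator and denominator.

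First I would note that, by assumption, all irreducible components of $\I_Y$ are defined over $\F_q$; and since $\k=\overline{\F}_q$, each irreducible component is in fact geometrically irreducible. Writing $D:=\dim\I_Y$ and stratifying $\I_Y$ into locally closed pieces corresponding to its components, the Lang--Weil estimate applied componentwise gives
\[
\#\I_Y(\F_{q^r})\;=\;c\,q^{rD}\;+\;\mathcal{O}\!\bigl(q^{r(D-1/2)}\bigr),
\]
since the $c$ top-dimensional geometrically irreducible components each contribute $q^{rD}+\mathcal{O}(q^{r(D-1/2)})$ points (the off-diagonal intersections of components lie in strictly lower dimension, hence are absorbed into the error term), and the lower-dimensional components contribute at most $\mathcal{O}(q^{r(D-1)})\subset\mathcal{O}(q^{r(D-1/2)})$.

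Next, since $\G$ is a connected affine algebraic group defined over $\F_q$, it is geometrically irreducible of dimension $\dim\G$, so another application of Lang--Weil yields
\[
\#\G(\F_{q^r})\;=\;q^{r\dim\G}\bigl(1+\mathcal{O}(q^{-r/2})\bigr).
\]
Combining these via the Burnside identity \eqref{Burnside} and using $1/(1+\mathcal{O}(q^{-r/2}))=1+\mathcal{O}(q^{-r/2})$, together with $d=D-\dim\G$,
\[
\#Y(\F_{q^r})/\G(\F_{q^r})
=\frac{c\,q^{rD}+\mathcal{O}(q^{r(D-1/2)})}{q^{r\dim\G}\bigl(1+\mathcal{O}(q^{-r/2})\bigr)}
=c\,q^{rd}+\mathcal{O}\!\bigl(q^{r(d-1/2)}\bigr),
\]
which is the claim.

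The only non-routine ingredient is Lang--Weil, but applying it here is largely bookkeeping; the main thing to be careful about is the case $Y=X_{\rm ind}$, where $\I_Y$ is merely constructible rather than a variety. This is handled by the stratification argument above and the stability of $\I_{X_{\rm ind}}$ under $\mathrm{Gal}(\overline{\F}_q/\F_q)$ (after possibly enlarging $\F_q$), which is guaranteed by Proposition \ref{indec} since the property of being (absolutely) indecomposable is intrinsic.
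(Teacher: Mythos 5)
Your argument is correct and follows essentially the same route as the paper: reduce the orbit count to a point count via Burnside, then estimate the point counts of $\I_Y$ (componentwise, after the reduction to varieties) and of the connected group $\G$, and divide. The only cosmetic difference is that you invoke Lang--Weil where the paper uses the Grothendieck--Lefschetz trace formula together with Deligne's bounds (top cohomology contributing $cq^{rd}$, lower eigenvalues of absolute value at most $q^{d-1/2}$), which amounts to the same estimate.
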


\begin{proof} Using Burnside's formula (\ref{Burnside}), the statement reduces to constructible sets and then to algebraic varieties. Let $X$ be an $\F_q$-variety of dimension $d$ such that its irreducible components of dimension $d$ are also defined over $\F_q$. By  the Grothendieck trace formula applied to  $X$ we have

$$
\card\,\left(X(\F_{q^r})\right)=\sum_i(-1)^i\, \Tr\left((F^*) ^r, H_c^i(X,\overline{\Q}_\ell)\right),
$$
for any integer $r>0$, where $F:X\rightarrow X$ is the geometric Frobenius and $H_c^i(X,\overline{\Q}_\ell)$ is the $i$-th $\ell$-adic cohomology group with compact support. Now  $F^*$ acts on the top cohomology $H_c^{2d}(X,\overline{\Q}_\ell)\simeq (\overline{\Q}_\ell)^c$ by multiplication by $q^d$ (this is explained for instance in \cite[\S 6.5]{Springer}). Moreover all complex conjugates of the eigenvalues of $F^*$ on $H_c^i(X,\overline{\Q}_\ell)$, with $i<2d$, have absolute values $\leq q^{d-\frac{1}{2}}$ by Deligne's work on the Weil conjectures \cite{Deligne-Weil}.

\end{proof}

\begin{corollary}Assume that $G',X',Y'$ is another datum analogously to $G,X,Y$. If for all $r>0$ we have

$$
\card\, \left(Y(\F_{q^r})/\G(\F_{q^r})\right)=\card\, \left(Y'(\F_{q^r})/\G'(\F_{q^r})\right),
$$
then $c(Y)=c(Y')$ and $d(Y)=d(Y')$. In particular, if the orbit set $Y(\overline{\F}_q)/G(\overline{\F}_q)$ is finite  then $Y'(\overline{\F}_q)/G'(\overline{\F}_q)$ is also finite and the two cardinalities agree.
\label{parameters}\end{corollary}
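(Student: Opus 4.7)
The plan is to combine the preceding theorem (the Grothendieck--Deligne estimate) with a standard asymptotics argument in $r$. From the theorem, applied to both data, we get
\bes
\#\,Y(\F_{q^r})/\G(\F_{q^r}) &=& c(Y)\, q^{r\,d(Y)} + O\!\left(q^{r(d(Y)-1/2)}\right),\\
\#\,Y'(\F_{q^r})/\G'(\F_{q^r}) &=& c(Y')\, q^{r\,d(Y')} + O\!\left(q^{r(d(Y')-1/2)}\right).
\ees
The hypothesis says the left-hand sides agree for every $r>0$, so the right-hand sides are equal for every $r>0$.

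The next step is to extract $d(Y)=d(Y')$ and then $c(Y)=c(Y')$ from this equality of asymptotic expansions. Set $D=\max(d(Y),d(Y'))$ and divide through by $q^{rD}$; letting $r\to\infty$ forces both leading terms to match, which can only happen if $d(Y)=d(Y')$ (otherwise the dominant side would blow up relative to the other). Once $d(Y)=d(Y')=:d$, we can divide by $q^{rd}$ and let $r\to\infty$ again: the error terms vanish and we are left with $c(Y)=c(Y')$. This part is routine; the only thing to watch is that $c(Y),c(Y')$ are positive integers (when $Y,Y'$ are non-empty), so no issues with $c=0$ subtleties arise, and the edge case of empty $Y$ is trivial.

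For the last assertion, suppose $Y(\overline{\F}_q)/\G(\overline{\F}_q)$ is finite. By Lemma \ref{d=0} this is equivalent to $d(Y)=0$, and then $|Y(\overline{\F}_q)/\G(\overline{\F}_q)|=c(Y)$. By what we just proved, $d(Y')=d(Y)=0$, so invoking Lemma \ref{d=0} in the reverse direction we conclude $Y'(\overline{\F}_q)/\G'(\overline{\F}_q)$ is finite, and its cardinality equals $c(Y')=c(Y)$.

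The main obstacle, if any, is a bookkeeping one: one must enlarge $\F_q$ once at the start so that the top-dimensional components of both $\I_Y$ and $\I_{Y'}$ are defined over the same ground field, so that the preceding theorem can be applied simultaneously. Since only finitely many components are involved, this is harmless and only changes the set of $r$ over which the equality is tested by a bounded multiplicative factor, which does not affect the $r\to\infty$ limit.
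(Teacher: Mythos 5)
Your proof is correct and follows the natural argument the paper leaves implicit (no explicit proof of Corollary~\ref{parameters} is given): one applies the preceding Grothendieck--Deligne estimate to both data, compares the resulting asymptotics in $r$ to force $d(Y)=d(Y')$ (since $c(Y),c(Y')\geq 1$), then $c(Y)=c(Y')$, and invokes Lemma~\ref{d=0} for the final sentence. The only stylistic quibble is the phrase ``the dominant side would blow up relative to the other'' -- after dividing by $q^{rD}$ with $D=\max(d(Y),d(Y'))$, the correct statement is that one side tends to a positive constant while the other tends to $0$ -- but the underlying reasoning and conclusion are right, and your remark about enlarging $\F_q$ once so both $\I_Y$ and $\I_{Y'}$ have their top-dimensional components defined over the base field is exactly the right bookkeeping.
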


\begin{proof}From Theorem \ref{theoDel}, we can read  $c(Y)$ and $d(Y)$ from the term of highest degree in $\card\, (Y(\F_{q^r})/\G(\F_{q^r}))$.

\end{proof}

We now apply the above result to quiver representations.
\bigskip

\begin{theorem} \label{algclofinm} Assume that $\R$ is  a Frobenius $\overline{\F}_q$-algebra. 

\noindent (i) If $Q'$ is a quiver obtained from $Q$ by changing the orientation of some arrows. Then 

$$
m_\alpha(\R Q)=m_\alpha(\R Q').
$$

\noindent (ii) We have 

$$
m_\alpha(\R[\epsilon]Q)=m_\alpha(\Pi_\R(Q)).
$$

\label{theorep}\end{theorem}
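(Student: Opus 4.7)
The plan is to reduce the $\overline{\F}_q$ case to the finite-field case (Corollary~\ref{finitecase}) by spreading all data out over a sufficiently large subfield $\F_{q^s} \subset \overline{\F}_q$ and then invoking the point-counting criterion of Corollary~\ref{parameters}.

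First I would descend the Frobenius algebra. Since $\R$ is finite-dimensional over $\overline{\F}_q$, both its multiplication and its Frobenius form $\lambda$ are encoded by finitely many scalars, so there exist an integer $s$, an $\F_{q^s}$-subalgebra $\R_0 \subset \R$, and a linear form $\lambda_0 : \R_0 \to \F_{q^s}$ such that $\R \cong \R_0 \otimes_{\F_{q^s}} \overline{\F}_q$ and $\lambda = \lambda_0 \otimes_{\F_{q^s}} \overline{\F}_q$. Non-degeneracy of $\lambda_0$ on ideals of $\R_0$ is equivalent to non-degeneracy of $\lambda$ on the ideals they generate after base change, so $\R_0$ is a genuine Frobenius $\F_{q^s}$-algebra. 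Enlarging $s$ if necessary, I also arrange that the irreducible components of maximal dimension of the incidence varieties $\I_Y$ for $Y \in \{\rep^\alpha(Q,\R), \rep^\alpha(Q',\R), \rep^\alpha(Q,\R[\epsilon]), \mu_\R^{-1}(0)\}$ are all defined over $\F_{q^s}$.

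For every $r \geq 1$, set $\R_r := \R_0 \otimes_{\F_{q^s}} \F_{q^{sr}}$, which is a Frobenius $\F_{q^{sr}}$-algebra by the same descent argument. By construction the $\F_{q^{sr}}$-points of the $\F_{q^s}$-schemes above are
\begin{align*}
\rep^\alpha(Q,\R)(\F_{q^{sr}}) &= \rep^\alpha(Q,\R_r), & \G_\alpha(\R)(\F_{q^{sr}}) &= \G_\alpha(\R_r),
\end{align*}
and similarly for $Q'$, $\R[\epsilon]$, and $\mu^{-1}_\R(0)$ (the last one because forming $\mu$ commutes with base change). Applying the finite-field Corollary~\ref{finitecase} to the Frobenius $\F_{q^{sr}}$-algebra $\R_r$ yields, for every $r$,
\begin{align*}
\#\bigl(\rep^\alpha(Q,\R_r)/\G_\alpha(\R_r)\bigr) &= \#\bigl(\rep^\alpha(Q',\R_r)/\G_\alpha(\R_r)\bigr),\\
\#\bigl(\rep^\alpha(Q,\R_r[\epsilon])/\G_\alpha(\R_r[\epsilon])\bigr) &= \#\bigl(\mu_{\R_r}^{-1}(0)/\G_\alpha(\R_r)\bigr).
\end{align*}

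Finally, I invoke Corollary~\ref{parameters} (with $q$ replaced by $q^s$): equality of orbit counts over $\F_{q^{sr}}$ for all $r \geq 1$ forces equality of the pairs $(c,d)$ on the two sides, which is precisely the content of (i) and (ii). The only delicate point is the descent in the first step, specifically that non-degeneracy of the Frobenius form is preserved when passing between $\R_0$ and $\R$; once that is checked the rest is bookkeeping between $\F_{q^{sr}}$-points and representations of the descended algebra.
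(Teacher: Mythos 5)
Your proposal is correct and follows essentially the same route as the paper: descend $\R$ and its Frobenius form to a finite subfield (checking that non-vanishing on ideals is preserved), identify the $\F_{q^{sr}}$-points of the relevant varieties with representations over the base-changed algebra, apply the finite-field result (Corollary~\ref{finitecase}) over every extension, and conclude with Corollary~\ref{parameters}. You merely make explicit a couple of steps the paper leaves implicit (e.g.\ that each $\R_r$ is again Frobenius and that the moment map commutes with base change), which is harmless.
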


\begin{proof}The $1$-form $\lambda:\R\rightarrow\k$ is defined over some finite subfield $\kappa$ of $\k$. Namely there is a finite-dimensional $\kappa$-algebra $\mathcal{R}$ together with a $\kappa$-linear form $\eta:\mathcal{R}\rightarrow\kappa$ that gives back $\lambda$ after extension of scalars from $\kappa$ to $\k$. If the kernel of $\eta$ contains a non-zero ideal, then so does $\lambda$. Therefore $\mathcal{R}$ is a Frobenius $\kappa$-algebra. The $\k$-algebraic varieties  $\rep^\alpha(Q,\R)$, $\rep^\alpha(Q,\R[\epsilon])$, $\mu^{-1}_\R(0)$ are also defined over $\kappa$ and the set of their $\kappa$-points are respectively $\rep^\alpha(Q,\mathcal{R})$, $\rep^\alpha(Q,\mathcal{R}[\epsilon])$ and $\mu^{-1}_{\mathcal{R}}(0)$. The theorem is thus a consequence of Corollaries \ref{finitecase} and \ref{parameters}.
\end{proof}

We now assume that $\k$ contains a primitive $|\alpha|$-th of unity.

\begin{theorem} \label{algclofina} Assume that $\R$ is a local Frobenius $\overline{\F}_q$-algebra. 

\noindent (i) If $Q'$ is a quiver obtained from $Q$ by changing the orientation of some arrows. Then 

$$
a_\alpha(\R Q)=a_\alpha(\R Q').
$$

\noindent (ii) We have 

$$
a_\alpha(\R[\epsilon]Q)=a_\alpha(\Pi_\R(Q)).
$$

\label{theoai}\end{theorem}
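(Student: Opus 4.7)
The proof follows the template of Theorem~\ref{algclofinm}, substituting Corollary~\ref{abso} for Corollary~\ref{finitecase} at the finite-field level, with extra care needed to preserve the hypothesis on the primitive root of unity during descent.

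First I would spread the setup out over a finite subfield of $\k=\overline{\F}_q$. Exactly as in the proof of Theorem~\ref{algclofinm}, the Frobenius form $\lambda:\R\to\k$ is defined over a finite subfield $\kappa_0\subset\k$, giving a Frobenius $\kappa_0$-algebra $\mathcal{R}_0$ with $\mathcal{R}_0\otimes_{\kappa_0}\k\cong\R$. The hypothesis that $\k$ contains a primitive $|\alpha|$-th root of unity forces $|\alpha|$ to be coprime to $\charx\k$, so enlarging $\kappa_0$ we may choose $\kappa=\F_{q^s}$ so that, simultaneously, $\kappa$ contains a primitive $|\alpha|$-th root of unity, the algebra $\mathcal{R}:=\mathcal{R}_0\otimes_{\kappa_0}\kappa$ is split local over $\kappa$ (its residue field, which is a $\kappa_0$-subalgebra of $\mathcal{R}_0$ by~\cite[Theorem~7.7]{eisenbud}, descends to $\kappa$ after enlargement), and the incidence varieties that define $m_\alpha$ and $a_\alpha$ for $\rep^\alpha(Q,\R)$, $\rep^\alpha(Q,\R[\epsilon])$ and $\mu^{-1}_\R(0)$ have their top-dimensional irreducible components defined over $\kappa$. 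By Proposition~\ref{indec} and Corollary~\ref{coro} the indecomposable loci are constructible subsets cut out by the condition that the nilpotent ideal of the endomorphism algebra has codimension $1$; this condition is preserved under base change, so these constructible subsets also descend to $\kappa$.

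For every finite extension $\kappa'=\F_{q^{rs}}$ of $\kappa$, writing $\mathcal{R}':=\mathcal{R}\otimes_\kappa\kappa'$, the $\kappa'$-points of the descended constructible sets recover exactly $\rep^\alpha(Q,\mathcal{R}')$, $\rep^\alpha(Q,\mathcal{R}'[\epsilon])$ and $\mu^{-1}_{\mathcal{R}'}(0)$. The absolutely indecomposable subloci also match: since $\overline{\kappa'}=\k$, absolute indecomposability of an element of $\rep^\alpha(Q,\mathcal{R}')$ is by definition indecomposability of its image in $\rep^\alpha(Q,\R)$. The algebra $\mathcal{R}'$ is still split local Frobenius, with residue field $\kappa'$ containing a primitive $|\alpha|$-th root of unity, so Corollary~\ref{abso} applies and yields
\[
a_\alpha(\mathcal{R}' Q)=a_\alpha(\mathcal{R}' Q'),\qquad a_\alpha(\mathcal{R}'[\epsilon]Q)=a_\alpha(\Pi_{\mathcal{R}'}(Q))
\]
for every $r>0$. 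Corollary~\ref{parameters}, applied to the absolutely indecomposable incidence varieties on both sides, then converts these finite-field identities into the equality of the pairs $(c,d)$ that defines $a_\alpha$ over $\k$.

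The main subtlety, as already in Theorem~\ref{algclofinm}, lies in the descent step: one must verify that the absolutely indecomposable locus is genuinely $\kappa$-rational and that its $\kappa'$-points coincide with the absolutely indecomposable representations of $\mathcal{R}'$. The former is handled by the constructible characterization of Corollary~\ref{coro}, and the latter is automatic because the algebraic closure of each finite subfield $\kappa'$ in play is $\k$ itself, so indecomposability over $\R$ is the same as absolute indecomposability over $\mathcal{R}'$.
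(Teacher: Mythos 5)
Your proof is correct and follows the same strategy as the paper: descend to a finite subfield containing a primitive $|\alpha|$-th root of unity over which a Frobenius algebra model $\mathcal R$ of $\R$ exists, identify the $\kappa'$-points of the descended constructible indecomposable loci with the absolutely indecomposable locally free representations over $\mathcal R'=\mathcal R\otimes_\kappa\kappa'$, and combine Corollary~\ref{abso} at each finite level with Corollary~\ref{parameters}. One small remark: your step of enlarging $\kappa$ to make $\mathcal R$ split local is actually vacuous — any descent $\mathcal R_0$ of a local $\k$-algebra $\R$ over a finite field $\kappa_0$ is automatically split local, since otherwise its (separable) residue field extension $\kappa_0'/\kappa_0$ would make $\mathcal R_0\otimes_{\kappa_0}\k$ semilocal with $[\kappa_0':\kappa_0]>1$ maximal ideals — but including the redundant step is harmless.
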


\begin{proof}We consider a finite subfield $\kappa$ of $\k$ that contains an $|\alpha|$-th root of unity and such that there is a Frobenius $\kappa$-algebra $\mathcal{R}$ giving $\R$ after extension of scalars from $\kappa$ to $\k$ (see the proof of Theorem \ref{theorep}). The set of $\kappa$-points of the constructible subsets of  indecomposable representations of $\rep^\alpha(Q,\R)$, $\rep^\alpha(Q,\R[\epsilon])$ and $\mu^{-1}_\R(0)$ are the sets of absolutely indecomposable representations of  $\rep^\alpha(Q,\mathcal{R})$, $\rep^\alpha(Q,\mathcal{R}[\epsilon])$ and $\mu^{-1}_{\mathcal{R}}(0)$. Hence the theorem follows from Corollaries \ref{abso} and  \ref{parameters}.
\end{proof}

\subsubsection{Arbitrary algebraically closed field}

We assume that $\k$ is an arbitrary algebraically closed field and that  $\R$ is a Frobenius $\k$-algebra with Frobenius $1$-form $\lambda:\R\rightarrow\k$. Let us write  $\R$ in the form $\k[x_1,\dots,x_r]/(f_1,\dots,f_s)$. Let $\mathcal{B}$ be a $\k$-basis of $\R$  and we denote by ${\rm det}(\lambda)$ the determinant of the non-degenerate bilinear form $\R\times\R\rightarrow\k$, $(a,b)\mapsto \lambda(ab)$ with respect to $\mathcal{B}$. Let $A\subset\k$ be a finitely generated $\Z$-algebra which contains the coefficients of the polynomials $f_j$'s, the coefficients of the matrix of $\lambda$ (with respect to $\mathcal{B}$) and ${\rm det}(\lambda)^{-1}$ (if ${\rm char}(\k)=p>0$, then it is of the form $\F_q[z_1,\dots,z_m]$ for some finite field extension $\F_q$ of $\F_p$). We then denote by $\mathcal{R}$ the $A$-algebra $A[x_1,\dots,x_n]/(f_1,\dots,f_s)$. The elements of $\mathcal{B}$ are clearly independent over $A$ (as they are over $\k$). Enlarging $A$ if necessary we assume that $\mathcal{B}$ generates $\mathcal{R}$, i.e. is a basis of the $A$-module $\mathcal{R}$. Consider the $A$-linear map $\zeta:\mathcal{R}\rightarrow A$ that gives back $\lambda$ after extension of scalars from $A$ to $\k$. 

For any ring homomorphism $\varphi: A\rightarrow K$, with $K$ a field,  we denote by $\mathcal{R}^\varphi$ the $K$-algebra $\mathcal{R}\otimes_AK$. Then the matrix of the bilinear form on $\mathcal{R}^\varphi$ defined from the $1$-form $\zeta^\varphi: \mathcal{R}^\varphi\rightarrow K$, $x\otimes l\mapsto \varphi(\zeta(x))l$ is invertible and so $\mathcal{R}^\varphi$ is a Frobenius $K$-algebra of same dimension as $\R$. 

\begin{lemma}If $\R$ is local, then $\mathcal{R}^\varphi$ is also local. 
\end{lemma}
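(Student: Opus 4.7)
My plan is to lift the local structure of $\R$ (which is split, with nilpotent maximal ideal $\mathfrak{m}$ of codimension one, since $\k$ is algebraically closed) up to the integral model $\mathcal{R}$ and then push it forward to $\mathcal{R}^\varphi$ by base change. The key observation that drives the whole argument is that $A$ is a subring of $\k$, so any element of $A$ whose image in $\k$ vanishes is already zero in $A$; this lets me lift identities from $\R$ verbatim to $\mathcal{R}$.

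Concretely, I would first choose a $\k$-basis $b_1=1,b_2,\ldots,b_n$ of $\R$ adapted to the decomposition $\R=\k\cdot 1\oplus\mathfrak{m}$, so that $b_2,\ldots,b_n$ span $\mathfrak{m}$. The change-of-basis matrix from $\mathcal{B}$ to $(b_i)$ lies in $\mathrm{GL}_n(\k)$, so after enlarging $A$ inside $\k$ by its entries and those of its inverse I may assume $(b_i)$ is also an $A$-basis of $\mathcal{R}$. For $i\geq 2$, the nilpotency $b_i^N=0$ in $\R$, expanded in this basis with coefficients in $A$, forces each coefficient to vanish in $\k$, hence in $A$, so that $b_i^N=0$ already in $\mathcal{R}$. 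The same lifting trick applied to $b_ib_j\in\mathfrak{m}$ (for $i,j\geq 2$) shows that the $A$-expansion of $b_ib_j$ has no $b_1$-component.

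Base changing along $\varphi:A\to K$, the elements $b_i\otimes 1$ remain a $K$-basis of $\mathcal{R}^\varphi$, and the identities of the previous paragraph show that
\[
J := \bigoplus_{i\geq 2} K\cdot(b_i\otimes 1)
\]
is an ideal of $\mathcal{R}^\varphi$ of $K$-codimension one, with $\mathcal{R}^\varphi/J\cong K$. Since $\mathcal{R}^\varphi$ is commutative and $J$ is generated by the nilpotent elements $b_i\otimes 1$, every element of $J$ is nilpotent; conversely, any element outside $J$ has non-zero image in the field $\mathcal{R}^\varphi/J\cong K$ and so is not nilpotent. Hence $J$ is simultaneously the nilradical and a maximal ideal, which forces it to be the unique prime ideal of $\mathcal{R}^\varphi$; this proves $\mathcal{R}^\varphi$ is local.

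The only subtle step is the initial basis adjustment, which forces one to enlarge $A$ by a finite collection of entries from $\k$; this is harmless and is entirely in the spirit of the enlargements already carried out in the preceding setup. Beyond that, the argument is mechanical once one exploits $A\hookrightarrow\k$, and no hypothesis on $\varphi$ beyond being a homomorphism into a field is needed.
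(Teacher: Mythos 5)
Your proof is correct and takes essentially the same route as the paper: choose an adapted basis with $1\in\mathcal{B}$ and the remaining basis elements spanning $\mathfrak{m}$, use $A\hookrightarrow\k$ to lift multiplicative relations to $\mathcal{R}$, and then base-change to see that the images span a codimension-one ideal that must be the unique maximal ideal of $\mathcal{R}^\varphi$. The only difference is that you spell out the nilpotency considerations (which identify $J$ with the nilradical and hence conclude locality) that the paper leaves implicit in its rather terse final sentence.
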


\begin{proof}If $\mathfrak{m}$ denotes the maximal ideal of $\R$, we may assume that $1\in\mathcal{B}$ and that $\mathcal{B}^*:=\mathcal{B}\backslash\{1\}$ generates $\mathfrak{m}$ as a $\k$-vector space. Then the $A$-submodule of $\mathcal{R}$ generated by $\mathcal{B}^*$ is closed under multiplication. The $K$-vector subspace of $\mathcal{R}^\varphi$ generated by the  elements of $\mathcal{R}^\varphi$ arising from $\mathcal{B}^*$ is thus  also closed under mutliplication, and so is the maximal ideal of $\mathcal{R}^\varphi$.
\end{proof}

The following theorem, together with Theorems \ref{algclofinm} and \ref{algclofina} implies Theorems \ref{indeptheo} and \ref{maintheoquiv} for an arbitrary algebraically closed field $\k$. Indeed, the residue field of a closed point of $A$ is a finite field and so we always have ring homomorphisms from $A$ to the algebraic closure of some finite field.

\begin{theorem} There exists a finitely generated $\Z$-algebra $A\subset\k$ as above and an open dense subset $U$ of ${\rm Spec}(A)$ such that for any algebraically closed field $K$ and any ring homomorphism $\varphi : A\rightarrow K$ such that the image of ${\rm Spec}(K)\rightarrow {\rm Spec}(A)$ is in $U$, we have

\begin{equation}
m_\alpha(\R Q)=m_\alpha(\mathcal{R}^\varphi Q),\hspace{.5cm}m_\alpha(\Pi_\R(Q))=m_\alpha(\Pi_{\mathcal{R}^\varphi}(Q)).
\label{m}\end{equation}

 If moreover $\R$ is local, 

\begin{equation}
a_\alpha(\R Q)=a_\alpha(\mathcal{R}^\varphi Q),\hspace{.5cm}a_\alpha(\Pi_\R(Q))=a_\alpha(\Pi_{\mathcal{R}^\varphi}(Q)).
\label{a}\end{equation}
\label{charchange}\end{theorem}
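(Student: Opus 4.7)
The plan is to spread out the entire setup over $A$ and apply generic constancy arguments for morphisms of finite presentation. By enlarging the finitely generated $\Z$-algebra $A\subset\k$ (while preserving the properties built into it in the paragraph preceding the theorem) one may assume that all the relevant objects are defined over $A$. Concretely, form the affine $A$-schemes
$$\mathcal{Y}\in\bigl\{\rep^\alpha(Q,\mathcal{R}),\ \rep^\alpha(Q,\mathcal{R}[\epsilon]),\ \mu_\mathcal{R}^{-1}(0)\subset\rep^\alpha(\overline{Q},\mathcal{R})\bigr\}$$
together with the affine $A$-group scheme $\G_\alpha^{\mathcal{R}/A}$ and its action morphism on each. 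Since $\mathcal{B}$ is an $A$-basis of $\mathcal{R}$, the formation of each $\mathcal{Y}$ commutes with arbitrary base change: for any ring homomorphism $\varphi:A\to K$ with $K$ a field, $\mathcal{Y}\otimes_A K$ recovers the corresponding $K$-scheme over $\mathcal{R}^\varphi$.

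Next, to any such $\mathcal{Y}$ associate the closed incidence subscheme
$$
\mathcal{I}_\mathcal{Y}:=\{(x,g)\in\mathcal{Y}\times_A\G_\alpha^{\mathcal{R}/A}\,|\,g\cdot x=x\},
$$
which is of finite presentation over $A$. By Chevalley's theorem on constructibility of geometric fiber dimension, and by the constructibility of the number of top-dimensional geometrically irreducible components of geometric fibers (EGA IV.3, \S 9.7), there exists an open dense $U_\mathcal{Y}\subset \Spec(A)$ over which both $\dim\,\mathcal{I}_{\mathcal{Y},s}$ and the number of its top-dimensional geometrically irreducible components are constant in $s$. Intersecting the three opens (one for each $\mathcal{Y}$) and shrinking so that $\dim\,(\G_\alpha^{\mathcal{R}/A})_s$ is also constant produces a common open $U\subset\Spec(A)$. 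On $U$ the invariants $(c,d)$ defining $m_\alpha(\mathcal{R}^\varphi Q)$ and $m_\alpha(\Pi_{\mathcal{R}^\varphi}(Q))$ agree with those computed over $\k$, establishing \eqref{m}.

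For \eqref{a}, I spread out the indecomposable locus of each $\mathcal{Y}$. By Corollary~\ref{coro} and Proposition~\ref{indec}, indecomposability is detected by the nilpotent ideal of $\End(M)$ having codimension $1$; expressing this as a rank condition on the multiplication in the endomorphism algebra (which varies algebraically with $M\in\mathcal{Y}$) produces a constructible $A$-subset $\mathcal{Y}_{\rm ind}\subset\mathcal{Y}$ whose formation commutes with base change on a dense open of $\Spec(A)$. Applying the argument of the previous paragraph to $\mathcal{Y}_{\rm ind}$ and shrinking $U$ further yields \eqref{a}.

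The main obstacle is this last point: verifying that the indecomposable locus descends to a constructible $A$-subset whose fiber over each $\varphi\in U$ is exactly the indecomposable locus of $\mathcal{Y}\otimes_A K$. This requires the rank-stratification of the multiplication map of $\End(M)$ to behave uniformly in families, which follows from upper semicontinuity of fiber rank together with a further application of Chevalley's theorem, at the cost of shrinking $U$. Once these technicalities are in place the spreading-out formalism is routine, and combined with Theorems~\ref{algclofinm} and~\ref{algclofina} applied at any closed point of $U$ (whose residue field is necessarily finite), it completes the proof, hence also the proofs of Theorems~\ref{indeptheo} and~\ref{maintheoquiv} for arbitrary algebraically closed $\k$.
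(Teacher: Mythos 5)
Your argument is correct and proves the theorem, but replaces the paper's main technical lemma (Proposition~\ref{cons}) by a more elementary route. The paper establishes generic constancy of the dimension and of the number of top-dimensional geometrically irreducible components of the incidence sets via $\ell$-adic cohomology: by Deligne's theorem de finitude, $R^i(f_S)_!\mathcal{F}$ is constructible, so after shrinking $U$ one may compare compactly supported cohomology across geometric points of $\Spec(A)$ and read off both invariants from $H^{2d}_c$. You instead invoke the EGA~IV, \S 9 constructibility theorems directly for the fiberwise functions $s\mapsto\dim\mathcal{I}_{\mathcal{Y},s}$ and $s\mapsto$ (number of top-dimensional geometric components), and use that a constructible function on the irreducible scheme $\Spec(A)$ is generically constant; this avoids \'etale cohomology entirely, while the paper's cohomological route dovetails naturally with the trace-formula arguments of \S\ref{positive} but is not logically necessary. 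The remainder of your proof --- using freeness of $\mathcal{R}$ over $A$ to make the Weil restrictions and the incidence schemes commute with base change --- matches the paper's functor-of-points computation in the final proposition of \S\ref{algclose}. On one point you are in fact more careful than the text: the paper asserts $X_{\rm ind}(\R)^\varphi=X_{\rm ind}(\mathcal{R}^\varphi)$ for \emph{every} $\varphi$, but its proof only verifies the Weil-restriction compatibility of the ambient schemes, not that the rank and nilpotency conditions cutting out the indecomposable locus specialize correctly in every characteristic. Your further shrinking of $U$ to make this stratification behave uniformly in the family closes that gap honestly, and since Theorem~\ref{charchange} only demands a dense open $U$, the conclusion is unaffected.
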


The rest of the section is devoted to the proof of  Theorem \ref{charchange}. We only prove it for the path algebra $\R Q$ as the proof for the preprojective algebra is similar.

If $X$ is a constructible subset of $\k^r$ defined by a finite number of equations $f_i=0$ and inequations $g_j\neq 0$, if $B\subset\k$ is a finitely generated $\Z$-algebra containing the coefficients of the polynomials $f_j$ and $g_j$ and if $\varphi:B\rightarrow K$ is a ring homomorphism into an algebraically closed  field $K$, then we denote by $f_i^\varphi, g_j^\varphi$ the corresponding polynomials with coefficients in $K$, and by $X^\varphi$ the constructible subset of $K^r$ defined by the equations $f_i^\varphi=0$ and inequations $g_j^\varphi\neq 0$.

Let us start with the following classical result (we give a proof for the convenience of the reader).

\begin{proposition}Let $X$ be a constructible subset of $\k^r$ defined by a finite number of polynomial equations $f_i=0$ and inequations $g_j\neq 0$. Let $B$ be a finitely generated $\Z$-subalgebra of $\k$ that contains the coefficients of the polynomials  $f_i$ and $g_j$. Then there exists a non-empty open subset $U$ of ${\rm Spec}(B)$, such that for any algebraically closed field $K$ and  homomorphism $\varphi:B\rightarrow K$ such that the image of ${\rm Spec}(K)\rightarrow{\rm Spec}(B)$ is in $U$, the constructible subset $X^\varphi$ of $K^r$  has the same dimension and the same number of irreducible components of maximal dimension as $X$.
\label{cons}\end{proposition}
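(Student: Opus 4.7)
The approach is a standard spreading-out argument: form a relative constructible scheme $\mathcal{X}$ over $\Spec(B)$ whose base change to $\k$ is $X$, and invoke the generic-constancy theorems of EGA IV for morphisms of finite type to transport the invariants $(\dim, \#\text{top-dimensional components})$ from $X$ to every geometric fiber $X^\varphi$, for $\varphi$ in a suitable dense open of $\Spec(B)$.

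The first step is the setup. The polynomials $f_i$, $g_j$ (now read with coefficients in $B$) cut out a $B$-constructible subset $\mathcal{X}\subset\mathbb{A}^r_B$ with $\mathcal{X}\otimes_B K=X^\varphi$ for any ring map $\varphi:B\to K$. Let $d=\dim X$ and decompose the Zariski closure $\overline{X}=X_1\cup\cdots\cup X_c\cup X_{c+1}\cup\cdots\cup X_s$ into irreducible components, the first $c$ having dimension $d$. Each $X_i$ is geometrically irreducible over $\k$ and defined by finitely many polynomials in $\k[x_1,\dots,x_r]$. After enlarging $B$ to a larger finitely generated $\Z$-subalgebra of $\k$ absorbing these coefficients (harmless, since the dominant morphism $\Spec(B')\to\Spec(B)$ sends a dense open to a constructible set containing a dense open, after shrinking $U$), each $X_i$ spreads to a closed $B$-subscheme $\mathcal{X}_i\subset\mathbb{A}^r_B$ with $\mathcal{X}_i\otimes_B\k=X_i$. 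Moreover, since $X$ is dense in $\overline{X}$, each $X_i$ meets $X$, so $X_i\not\subset V(g_j)$ for all $i\leq s$ and all $j$.

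The heart of the argument is the generic-fiber machinery from EGA IV$_3$: upper semi-continuity of fiber dimension, openness of geometric irreducibility of fibers (9.7.7), and constructibility of the number of geometric irreducible components (9.7.9). Applied to each morphism $\mathcal{X}_i\to\Spec(B)$, they ensure that on a dense open $U\subset\Spec(B)$ every geometric fiber $(\mathcal{X}_i)_\varphi$ is geometrically irreducible of dimension $\dim X_i$. Intersecting with the further open conditions $(\mathcal{X}_i)_\varphi\not\subset(\mathcal{X}_{i'})_\varphi$ for $i\neq i'$, $(\mathcal{X}_i)_\varphi\not\subset V(g_j^\varphi)$ for all admissible $(i,j)$, and ``the ambient fiber $V(f_i^\varphi)\subset\mathbb{A}^r_K$ contributes no geometric irreducible component beyond the $(\mathcal{X}_i)_\varphi$'s,'' one obtains the sought-for $U$: on its geometric fibers the Zariski closure of $X^\varphi$ is exactly $\bigcup_i(\mathcal{X}_i)_\varphi$, with irreducible components $(\mathcal{X}_i)_\varphi$, so that $\dim X^\varphi=d$ and exactly $c$ components attain this top dimension.

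The main obstacle is the interplay between the closed-set theory (where EGA IV applies cleanly) and the constructible structure coming from the inequations $g_j\neq 0$. Concretely, one must rule out ``phantom'' top-dimensional components arising in $X^\varphi$ from components of $V(f_i^\varphi)$ that over $\k$ were contained in $\bigcup_j V(g_j)$ but, for some special $\varphi$, escape it. This is a genuinely characteristic-specific phenomenon (for example, with $f_1=2x$, $g_1=x$ over $B=\Z$, the containment $V(f_1)|_{\overline{\Q}}\subset V(g_1)|_{\overline{\Q}}$ fails after base change to $\overline{\F_2}$), and is controlled by the openness of the finitely many non-containment conditions above. Since each bad locus is a proper closed subscheme of $\Spec(B)$, intersecting the finitely many complementary dense opens yields the required $U$.
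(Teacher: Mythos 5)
Your argument is correct in its broad strategy but takes a genuinely different route from the paper's. The paper's proof first reduces to the case where $X$ is locally closed, spreads $X$ out over $S=\Spec(B)$, and then applies Deligne's finiteness theorem (SGA~$4\tfrac12$, ``Th\'eor\`eme de finitude'') to conclude that the sheaves $R^i(f_S)_!\,\Z/\ell^n\Z$ are constructible on $S$, hence locally constant over a dense open $U$; the invariants $(\dim X,\ \#\text{top-dimensional components})$ are then read off from the top compactly supported $\ell$-adic cohomology of the geometric fibers, which is therefore constant over $U$. In contrast, you avoid \'etale cohomology entirely and work directly with the generic constancy results of EGA~IV$_3$, Chapter~9, for fiber dimension and for the number of geometric irreducible components, spreading out the irreducible components of $\overline X$ and tracking them by hand. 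The paper's route is more concise and packages both invariants into a single cohomology group; your route is more elementary and self-contained, at the cost of more component bookkeeping. One point worth tightening: to rule out the phantom components you describe, the ``non-containment'' conditions you list are not the relevant ones. You must also spread out the irreducible components of $V(\{f_i\})$ that over $\k$ are \emph{contained} in $\bigcup_j V(g_j)$ (these are not among your $\mathcal X_i$'s, since they do not meet $X$), and the condition you need is that these stay contained in $V(\prod_j g_j^\varphi)$ over the fibers and that together with the $(\mathcal X_i)_\varphi$'s they exhaust $V(\{f_i^\varphi\})$. These are constructible (not open) conditions on $\Spec(B)$ by Chevalley's theorem, but since they hold at the generic point of $\Spec(B)$ (the image of $\Spec(\k)\to\Spec(B)$, as $B$ is a domain contained in $\k$), they hold on a dense open, and then your conclusion $\overline{X^\varphi}=\bigcup_i(\mathcal X_i)_\varphi$ follows.
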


\begin{proof}As a constructible subset of $\k^r$ is a finite union of locally closed subsets of $\k^r$ we may assume that $X$ is a locally closed subset of $\k^r$ which we regards as a $\k$-scheme. We put $S:={\rm Spec}(B)$ and we denote by $f_S:X_S\rightarrow S$ the map  that gives the structural map $f:X\rightarrow \k$ after extension of scalars from $B$ to $\k$. Let $\ell$ be a prime invertible in $\k$. Recall \cite[Proposition 2.5, Chapter 2]{deligne} that if $\calF$ is a $\overline{\Q}_\ell$-sheaf on a Noetherien scheme $Z$, there exists a finite partition of $Z$ in locally closed subschemes $Z_i$ such that $\calF|_{Z_i}$ is smooth. We apply this to the $\overline{\Q}_\ell$-sheaves $R^i(f_S)_!\overline{\Q}_\ell$ on $S$ (see \cite[\S 2.11, Chapter 2]{deligne}). Therefore there exists an open subset $U$ of $S$ such that for all $i$, $R^i(f_S)_!\overline{\Q}_\ell |_U$ is smooth. 

The fibers $(R^i (f_S)_!\mathcal{F})_{\overline{x}}\simeq H_c^i(X_S\times_SK,\overline{\Q}_\ell)$ over the geometric points $\overline{x}:{\rm Spec}(K)\rightarrow U$, with $K$ algebraically closed, are thus  (non-canonically) isomorphic.

Hence for the geometric points $\overline{s}_o:{\rm Spec}(\k)\rightarrow U\subset S$ (induced by the inclusion $B\subset \k$) and $\overline{s}:{\rm Spec}(\overline{\F}_q)\rightarrow U\subset S$  (above a closed point $s$ of $S$ with residue field $\F_q$) we have

$$
H_c^i(X,\overline{\Q}_\ell)\simeq H_c^i(X_S\times_S \overline{\F}_q,\overline{\Q}_\ell).
$$
If we denote by $\varphi:B\rightarrow \overline{\F}_q$ the map induced by $\overline{s}$, then $X_S\times_S\overline{\F}_q=X^\varphi$.  We conclude by recalling that the dimension of the top compactly supported $\ell$-adic cohomology equals the number of irreducible components of maximal dimension.

\end{proof}

For a Frobenius algebra $\R'$ over an algebraically closed field $\k'$ put
$$
X(\R'):=\{(x,g)\in \rep^\alpha(Q,\R')\times\G_\alpha(\R')\,|\, g\cdot x=x\}.
$$
If $\R'$ is local we denote by ${\rm Ind}^\alpha(Q,\R')$ the subset of $\rep^\alpha(Q,\R')$ of indecomposable representations. By Proposition \ref{indec}, it is a constructible set over $\k'$. The set

$$
X_{\rm ind}(\R'):=\{(x,g)\in {\rm Ind}^\alpha(Q,\R')\times\G_\alpha(\R')\,|\, g\cdot x=x\}
$$
is thus a constructible set over $\k'$. 

Let $A\subset \k$ be a finitely generated $\Z$-algebra as above, i.e. such that $\mathcal{R}^\varphi$ is a Frobenius $K$-algebra for all homomorphism $\varphi:A\rightarrow K$, with $K$ a field. Extending $A$ if necessary we may assume that both $X(\R)$ and $X_{\rm ind}(\R)$ are defined over $A$ (i.e. can be defined as the set of solutions in some $\k^r$ of equations and inequations with coefficients in $A$). Theorem \ref{charchange} is thus a consequence of Proposition \ref{cons} together with the following result.

\begin{proposition} For any algebraically closed field $K$ and any ring homomorphism $\varphi : A\rightarrow K$, we have $X(\R)^\varphi=X(\mathcal{R}^\varphi)$ and if $\R$ is local $X_{\rm ind}(\R)^\varphi=X_{\rm ind}(\mathcal{R}^\varphi)$.
\end{proposition}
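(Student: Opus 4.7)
The plan is to treat the two equalities separately, with the first being essentially tautological and the second reducing to the first via Corollary \ref{coro}.

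First I would handle $X(\R)^\varphi = X(\mathcal{R}^\varphi)$. Choose an $A$-basis $\mathcal{B}$ of the free $A$-module $\mathcal{R}$; it specializes to a $\k$-basis of $\R = \mathcal{R}\otimes_A \k$ and to a $K$-basis of $\mathcal{R}^\varphi = \mathcal{R}\otimes_A K$ (we already required $\mathcal{B}$ to generate $\mathcal{R}$ as an $A$-module). In these coordinates $\rep^\alpha(Q,\mathcal{R})$ becomes a free $A$-module of known rank, $\G_\alpha^{\mathcal{R}/A}$ is cut out of $\bigoplus_i \mathrm{Mat}_{\alpha(i)}(\mathcal{R})$ by the usual determinant inequation, and the action $g\cdot x$ is a polynomial expression in the coordinates whose coefficients are the structure constants of $\mathcal{R}$, all lying in $A$. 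The incidence subscheme cut out by $g\cdot x = x$ is therefore an $A$-subscheme whose $\k$-points are $X(\R)$ and whose $K$-points are $X(\mathcal{R}^\varphi)$; by definition, the former base-changes along $\varphi$ to the latter.

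For the second equality, by Corollary \ref{coro} and the lemma ensuring that $\mathcal{R}^\varphi$ is local Frobenius over $K$, a representation $M$ over either $\R$ or $\mathcal{R}^\varphi$ is indecomposable if and only if $\mathrm{End}(M)$ contains no idempotent other than $0$ and $\mathrm{id}$. Introduce the auxiliary $A$-scheme $\mathcal{Y}$ of triples $(M,e,\delta)$, where $e\in \bigoplus_i \mathrm{Mat}_{\alpha(i)}(\mathcal{R})$ commutes with all $M_a$ and satisfies $e^2=e$, and where $\delta$ records invertibility of $e$ or $\mathrm{id}-e$ (recording non-triviality via two open conditions cutting out $e\neq 0$ and $e\neq\mathrm{id}$). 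This is defined over $A$ by polynomial equations and inequations in our basis. By Chevalley's theorem its projection onto $\rep^\alpha(Q,\mathcal{R})$ is constructible, and the locus of indecomposable representations is its complement. Enlarging $A$ so that the first-order quantifier elimination used in the proof of Proposition \ref{indec} (as in \cite[\S 2.5]{kraft-riedtmann}) takes place over $A$, the formation of this projection commutes with base change along any $\varphi:A\to K$. Combined with the first part, this gives $X_{\rm ind}(\R)^\varphi = X_{\rm ind}(\mathcal{R}^\varphi)$.

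The main technical point is the spreading out of the projection in step two, i.e.\ guaranteeing that after possibly shrinking $\Spec A$ the constructible description of ``no non-trivial idempotent in $\mathrm{End}(M)$'' is stable under the specialization $\varphi$. This is where we actually need the hypothesis that $A$ has been enlarged, and it is supplied by standard generic-flatness/constructibility arguments applied to the proper projection from $\mathcal{Y}$; no deeper input is required, since Corollary \ref{coro} already ensures that the pointwise characterization is the correct one on both sides of the base change. Together the two parts imply $X_{\rm ind}(\R)^\varphi = X_{\rm ind}(\mathcal{R}^\varphi)$ and complete the proof.
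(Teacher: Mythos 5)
Your first equality is proved in essentially the same way as the paper, though more concretely: the paper packages the coordinate argument as the statement that Weil restriction of $\mathcal{R}/A$-schemes commutes with base change along $\varphi:A\to K$ (verifying the two functors of points agree), whereas you unwind this by choosing the $A$-basis $\mathcal{B}$, noting that the structure constants, the determinant inequation and the equation $g\cdot x=x$ all have coefficients in $A$, and base-changing. These are the same argument, and both are fine.

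For the second equality, you are actually more explicit than the paper, whose published proof stops after the Weil restriction computation and does not separately discuss $X_{\rm ind}$. Your idea — cut out the incidence scheme $\mathcal{Y}$ of pairs $(M,e)$ with $e\in\mathrm{End}(M)$, $e^2=e$, $e\neq 0,\mathrm{id}$ over $A$, project, and take the complement — is the right one, and the characterization of indecomposability you invoke (existence of a nontrivial idempotent in $\mathrm{End}(M)$, equivalently Corollary \ref{coro}) is valid in the local Krull--Schmidt setting over both $\R$ and $\mathcal{R}^\varphi$. However your final paragraph overstates the difficulty: you say one must ``enlarge $A$'' and invoke ``generic-flatness/constructibility'' spreading-out, leaving the impression that the equality might only hold over a dense open $U\subseteq\Spec A$. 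That restriction is needed for Proposition \ref{cons} (constancy of $\dim$ and of the number of top-dimensional components), not here. In the present proposition the equality holds for \emph{every} $\varphi$, for the following elementary reason: for a morphism $\mathcal{Y}\to\rep^\alpha(Q,\mathcal{R})$ of finite-type $A$-schemes, a $K$-point $M$ lies in the (constructible) image if and only if the fiber $\mathcal{Y}_M$ is nonempty, which for $K$ algebraically closed happens if and only if $\mathcal{Y}_M$ has a $K$-point; thus the image commutes with base change along any geometric point $\Spec K\to\Spec A$, with no shrinking. Once this is fixed, the rest of your argument goes through, and in fact fills a gap in the paper's proof, which only treats $X(\R)$ and not $X_{\rm ind}(\R)$.
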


\begin{proof}The proposition reduces to locally closed subsets of $\k^r$. Let $X=X/_A$ be an $A$-scheme of finite type. If $B$ is an $A$-algebra, we denote by $X/_B=X\times_A B$ the $B$-scheme obtained by scalar extension.  As a functor of points $X/_B$ is the composition of the functor $X/_A$ with the functor from the category of $B$-algebras to the category of $A$-algebras induced by the $A$-algebra structure on $B$, i.e. for any $B$-algebra $T$ we have

$$
X/_B(T)=X/_A(T).
$$

As $\mathcal{R}$ is free over $A$,  the functor

$$
B\mapsto X/_\mathcal{R}(\mathcal{R}\otimes_AB)=X/_A(\mathcal{R}\otimes_AB)
$$
from the category of $A$-algebras is representable (see \cite[Theorem 4]{Neron}) by an $A$-scheme $X_{\mathcal{R}/A}$ (called the Weil restriction of $X/_\mathcal{R}$ with respect to $A\subset\mathcal{R}$). We rephrase the proposition as follows : for any ring homomorphism $\varphi:A\rightarrow K$ into an algebraically closed field $K$ we have 

\begin{equation}
X_{\mathcal{R}/A}\times_A K= X_{\mathcal{R}^\varphi/K}
\label{equa}\end{equation}
where $X_{\mathcal{R}^\varphi/K}$ is the Weil restriction of the $\mathcal{R}^\varphi$-scheme $X/_{\mathcal{R}^\varphi}$ with respect to $K\subset\mathcal{R}^\varphi$. Indeed taking the $K$-points on both sides of (\ref{equa}) we get $X(R)^\varphi=X(\mathcal{R}^\varphi)$.

To see (\ref{equa}) we verify that the associated functors of points are the same. 

For a $K$-algebra $T$ we have

\begin{align*}
(X_{\mathcal{R}/A}\times_AK)(T)&= X_{\mathcal{R}/A}(T)\\&=X/_\mathcal{R}(\mathcal{R}\otimes_AT)\\
&=X/_A(\mathcal{R}\otimes_AT),
\end{align*}
while the functor $X_{\mathcal{R}^\varphi/K}$ takes the $K$-algebra $T$ to 

\begin{align*}
X_{\mathcal{R}^\varphi/K}(T)&=X/_K(\mathcal{R}^\varphi\otimes_KT)\\
&=X/_K(\mathcal{R}\otimes_AT)\\
&=X/_A(\mathcal{R}\otimes_AT).
\end{align*}

\end{proof}

\section{Finite locally free representation type}

In this section  we let $\k$ be an algebraically closed field, $\R$ a local Frobenius algebra over $\k$ and $Q=(I,E)$  a connected finite quiver. We are interested to find all examples such that
there are finitely many isomorphism classes of locally free indecomposable representations of $Q$ over $\R$. 

\begin{definition} The path algebra $\R Q$ has {\em finite locally free representation type} if there are finitely many isomorphism classes of locally free indecomposable representations of $\R Q$.
	\end{definition}

For $d\in \Z_{>0}$ we  denote by $\k_d$ the truncated polynomial ring $\k[t]/(t^d)$ in particular $\k_1\cong \k$ and  $\k_2\cong \k[\epsilon]$ by identifying $t$ with $\epsilon$.

\begin{theorem} \label{classif} $\R Q$ has finite locally free representation type if and only if \begin{itemize} \item $\R=\k$ and $Q$ is of $ADE$ Dynkin type
		\item $\R$ arbitrary and $Q$ is of type $A_1$
		\item $\R=\k_d$ for any $d>1$ and $Q$ is of type $A_2$
		\item $\R=\k_2$ or $\R=\k_3$  and $Q$ is of type $A_3$
		\item $\R=\k_2$ and $Q$ is of type $A_4$
		\end{itemize}
	\end{theorem}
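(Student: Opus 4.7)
The plan is to combine Theorem~\ref{main1}, Theorem~\ref{main2}, Gabriel's theorem, the Geiss--Leclerc--Schr\"oer classification, and a direct ideal-counting argument.

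By Theorem~\ref{main1}, the property of having finite locally free representation type depends only on the underlying graph of $Q$, so I fix any orientation. The case $Q = A_1$ is immediate: every locally free $\R$-module is free, and since $\R$ is local the module $\R$ itself is the unique indecomposable. The case $\R = \k$ is Gabriel's theorem. It remains to treat $Q$ with at least one edge and $\R$ with $\dim_\k \R \geq 2$.

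For $\R = \k_2 = \k[\epsilon]$, Theorem~\ref{main2} applied over the base Frobenius algebra $\k$ gives $a_\alpha(\k[\epsilon]Q) = a_\alpha(\Pi_\k(Q))$, so $\k[\epsilon]Q$ has finite locally free type if and only if the ordinary preprojective algebra $\Pi_\k(Q)$ has finite representation type, which classically holds exactly for $Q \in \{A_1,A_2,A_3,A_4\}$. For $\R = \k_d$ with $d \geq 3$, I would invoke the GLS classification from \cite{GLS} (and its continuations) for locally free indecomposables over truncated polynomial rings; this matches the finite-type loci with the listed symmetrizable Dynkin data, yielding the pair $(\k_3,A_3)$ and the pairs $(\k_d,A_2)$ for every $d>1$, and excluding all others among connected quivers with at least one edge.

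To eliminate every remaining candidate it suffices to handle $\R$ not of the form $\k_d$. The key observation is that in this case $\dim_\k(\mathfrak{m}/\mathfrak{m}^2) \geq 2$; otherwise $\mathfrak{m}$ would be principal, and together with nilpotence this forces $\R \cong \k[t]/(t^d) = \k_d$. Picking $x, y \in \mathfrak{m}$ whose images span a two-dimensional subspace of $\mathfrak{m}/\mathfrak{m}^2$, the elements $ax + by$ for $[a:b] \in \mathbb{P}^1(\k)$ lie in pairwise distinct $\R^\times$-orbits under multiplication, since their classes modulo $\mathfrak{m}^2$ already differ. Isomorphism classes of rank-$(1,1)$ locally free $A_2$-representations $f : \R \to \R$ are in bijection with $\R^\times$-orbits on $\R$ via $f(1)$, and each such representation is indecomposable because the free modules on both sides have rank one. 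For any connected $Q$ containing an edge $i \to j$, extending these by the zero module at the other vertices produces infinitely many pairwise non-isomorphic indecomposable locally free $\R Q$-representations.

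The main obstacle will be extracting from GLS's work the precise exclusion statement for $\R = \k_d$ with $d \geq 3$ beyond what Theorem~\ref{main2} can directly deliver, since for $d \geq 3$ one does not have a finite-dimensional ground field whose ordinary preprojective algebra captures $\k_d Q$ through our correspondence. Once that input is secured, cross-checking the positive list from GLS with the infinity arguments above completes the classification.
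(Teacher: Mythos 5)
Your handling of the easy cases is fine, and two of your steps are legitimately different from the paper's: the reduction to $\R\cong\k_d$ via $\dim_\k(\mathfrak{m}/\mathfrak{m}^2)\geq 2$ and a $\mathbb{P}^1(\k)$-family of $\R^\times$-orbits is a clean substitute for the paper's Proposition~\ref{a2finite}, and the $d=2$ case via Theorem~\ref{main2} together with the Dlab--Ringel classification of representation-finite preprojective algebras is a valid alternative route that the authors themselves point to when computing the $A_4$ counts. But the case $\R=\k_d$ with $d\geq 3$ is a genuine gap, and it is the heart of the theorem. There is no ``GLS classification of finite locally free representation type'' to invoke: \cite{GLS} determines the representation type of the relevant algebras counting \emph{all} indecomposables, not only the locally free ones, and the two notions diverge badly (for instance, by Remark~\ref{counterexfull} the $A_4$ quiver over $\k_2$ has tame, hence infinite, full representation type while having exactly $40$ locally free indecomposables). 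The paper closes exactly this gap with three ingredients you do not supply: Propositions~\ref{loop} and~\ref{d4} construct explicit infinite families of locally free indecomposables whenever the underlying graph contains a cycle or a vertex with at least three neighbours (over any $\k_d$, $d>1$, and over any local Frobenius algebra in the cycle case), reducing the question to type $A_n$; Theorem~\ref{main1} then allows passage to the linearly oriented $A_n$; and \cite[Proposition 11.1]{GLS} identifies the non-injective locally free indecomposables of $\k_dQ$ (linear $A_n$) with \emph{all} indecomposables of $\k_dQ_0$ (linear $A_{n-1}$), after which the representation-type result \cite[Proposition 13.1]{GLS} applies and yields the pairs $(A_2,\k_d)$ and $(A_3,\k_3)$ while excluding everything else. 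Without this bridge your ``invoke GLS'' step cannot be completed, as you yourself acknowledge.

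Two smaller points. Your exclusion argument for $\R\not\cong\k_d$ only covers quivers containing an edge between two distinct vertices, so a connected quiver consisting of a single vertex with loops over such an $\R$ is left untreated; the paper's Proposition~\ref{loop} handles all non-trees uniformly by base-changing Gabriel's infinite families from the residue field and observing that isomorphisms and idempotents reduce modulo $\mathfrak{m}$, and you need some such argument (the same remark applies to loop quivers and cycles over $\k_d$, $d\geq 3$, which your GLS appeal was meant to absorb). Also, ``indecomposable because the free modules on both sides have rank one'' is not a proof --- the zero map gives a decomposable rank-$(1,1)$ representation --- but this is easily repaired since your elements $ax+by$ are nonzero modulo $\mathfrak{m}^2$, hence nonzero, and a rank-$(1,1)$ representation with nonzero map over a local ring has no nontrivial idempotent endomorphisms; the same nonzero-map caveat should accompany the extension-by-zero step.
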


The analogous statement where we dot not restrict to locally free modules can be found in Geiss-Leclerc-Schr\" oer \cite[\S 13.3]{GLS} (when $\R$ is a truncated polynomial ring).

\bigskip

We will need several results to prove Theorem~\ref{classif}.

\begin{proposition} \label{a2finite} Let $\R$ be a local commutative Frobenius algebra over the algebraically closed field $\k$. Let $Q$ be a quiver with underlying graph $A_2$.
	If $$\rep^{(1,1)}(Q,\R)/\G_{(1,1)}(\R)=\R/\R^\times$$  is finite of cardinality $d+1$ then $\R\cong \k_d=\k[t]/(t^d)$. 
\end{proposition}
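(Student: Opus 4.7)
The plan is to identify the orbit space explicitly as the set of principal ideals of $\R$, and then use finiteness together with the fact that $\k$ is infinite (being algebraically closed) to force the maximal ideal of $\R$ to be principal.

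First, I would unpack the definition. Since $Q$ has two vertices and one arrow and $\alpha=(1,1)$, we have $\rep^{(1,1)}(Q,\R)=\Hom_\R(\R,\R)\cong \R$, while $\G_{(1,1)}(\R)=\R^\times\times\R^\times$ acts by $(g_j,g_i)\cdot r = g_j r g_i^{-1}$. Because $\R$ is commutative, this action factors through multiplication by $\R^\times$, so the orbit map is just $\R\twoheadrightarrow \R/\R^\times$. Two elements of $\R$ are in the same $\R^\times$-orbit iff they generate the same principal ideal, so the $d+1$ orbits are in bijection with the set of principal ideals of $\R$.

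Next, I would prove that $\dim_\k(\mathfrak{m}/\mathfrak{m}^2)\le 1$, where $\mathfrak{m}$ is the maximal ideal of $\R$. Suppose not; pick $\pi_1,\pi_2\in\mathfrak{m}$ whose images in $\mathfrak{m}/\mathfrak{m}^2$ are linearly independent. For each $[c_1:c_2]\in\P^1(\k)$, consider the principal ideal $(c_1\pi_1+c_2\pi_2)$. If $(c_1\pi_1+c_2\pi_2)=(c_1'\pi_1+c_2'\pi_2)$, then $c_1\pi_1+c_2\pi_2=u(c_1'\pi_1+c_2'\pi_2)$ for some $u\in\R^\times$; reducing modulo $\mathfrak{m}^2$ and writing $u\equiv u_0\in\k^\times$ gives $(c_1,c_2)=u_0(c_1',c_2')$, i.e.\ $[c_1:c_2]=[c_1':c_2']$. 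Since $\k$ is algebraically closed, $\P^1(\k)$ is infinite, producing infinitely many principal ideals and contradicting the hypothesis. Hence by Nakayama's lemma $\mathfrak{m}$ is principal, say $\mathfrak{m}=(\pi)$.

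Finally, I would classify all ideals. Since $\R$ is Artinian local with $\mathfrak{m}=(\pi)$, there is a smallest $n\ge 0$ with $\pi^n=0$, and the chain $\R\supsetneq(\pi)\supsetneq(\pi^2)\supsetneq\cdots\supsetneq(\pi^{n-1})\supsetneq(0)$ exhausts all ideals of $\R$: every nonzero $r\in\R$ can be written as $u\pi^i$ with $u\in\R^\times$ (take the largest $i$ with $r\in\mathfrak{m}^i$), so any ideal is generated by the lowest such power. Thus there are exactly $n+1$ (principal) ideals, and comparing with the hypothesis gives $n=d$. The surjection $\k[t]\to\R$ sending $t\mapsto\pi$ factors through $\k[t]/(t^d)$, and dimension count (or injectivity from the ideal structure) yields $\R\cong\k_d$.

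The main obstacle is the second step: it is where the hypothesis ``$\k$ algebraically closed'' is truly used, via the infinitude of $\P^1(\k)$. The rest is structural: once $\mathfrak{m}$ is principal, the local Artinian ring is automatically a quotient of a DVR and the count pins down $d$.
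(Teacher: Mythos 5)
Your argument is correct, and it reaches the paper's conclusion by a slightly different intermediate mechanism while sharing the same engine. Both proofs identify the orbit space with the set of principal ideals of $\R$ (the paper spells out the one point you assert without proof: in a local ring $(a)=(b)$ forces $a$ and $b$ to differ by a unit --- if $a=rb$ and $b=sa$ then $(1-rs)a=0$, and either $rs$ is a unit, so $r$ is, or $1-rs$ is a unit, so $a=b=0$; you should include this two-line check), and both exploit a linear pencil of elements of $\mathfrak{m}$ over the infinite field $\k$. From there the routes diverge: the paper uses finiteness to find $\lambda_1\neq\lambda_2$ with $(a+\lambda_1 b)=(a+\lambda_2 b)$, concludes that $a,b$ lie in a common principal ideal, hence that the principal ideals are directed and the maximal one must be $\mathfrak{m}$, so $\mathfrak{m}$ is principal; you instead bound the embedding dimension, showing $\dim_\k(\mathfrak{m}/\mathfrak{m}^2)\le 1$ by attaching distinct principal ideals to distinct points of $\mathbb{P}^1(\k)$, and then invoke Nakayama. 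Your version is a bit more structural (it isolates the statement that the embedding dimension is at most one, after which $\R\cong\k[t]/(t^d)$ is the standard classification of split Artinian local rings with principal maximal ideal), while the paper's is more elementary and self-contained (no Nakayama; it directly produces a generator of $\mathfrak{m}$). The endgame --- every nonzero element is $u\pi^i$ with $u$ a unit, the ideals are exactly $(\pi^i)$ for $0\le i\le d$, and $\k[t]/(t^d)\to\R$ is an isomorphism --- is the same in both proofs, and at the level of detail you give (injectivity from minimality of $d$, surjectivity from $\dim_\k\R=d$, or from $\R=\k\oplus\mathfrak{m}$ and $\mathfrak{m}^i=\k\pi^i+\mathfrak{m}^{i+1}$) it is complete.
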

\begin{proof}   A representation of the quiver $Q$ over $\R$ of rank $(1,1)$ is given by an element $a\in \R$. Two elements $a,b\in \R$ give the same representation up to isomorphism if $ae=b$ for some unit $e\in \R^\times=\GL_1(\R)$. In particular, in this case $(a)=(b)$. Conversely if $(a)=(b)$ with $a=rb$ and $b=sa$ then $a=ars$. If now $rs\in \m$ then $1-rs\notin \m$ thus $(1-rs)$ is a unit implying $a=0$. Then $(b)=0$ and so $b=0$ as well. If $rs\notin \m$ then it is a unit so both $r$ and $s$ are. Therefore $a=rb$ differ by the unit $r$.   Thus we have $\rep^{(1,1)}(Q,\R)/\G_{(1,1)}(\R)=\R/\R^\times$ counting the different principal ideals in $\R$. Assume $\R/\R^\times$ is finite.  Let $a,b\in \m$, where $\m\triangleleft \R$ is the maximal ideal.   As we have finitely many different principal ideals in $\R$ and $|\k|=\infty$ there will be $\lambda_1\neq \lambda_2\in \k$ such that $I:=(a+\lambda_1 b)=(a+\lambda_2 b)$. This implies that $b,a\in I$.  Thus any two proper
	principal ideals are contained in a third one. This means that there is a maximal proper principal ideal in $\R$. This ideal must contain all non-units and thus must equal ${\mathfrak m}$. If $(t)=\m$ then we see that every element in $\R$ can
	be written as $et^k$ for some $k\in \N$ and $e\in \R^\times$. As $\R$ is finite dimensional over $\k$ there must be a smallest $d\in \N$ such that $t^d=0$. Then the natural map $\k[t]/(t^d)\to \R$ is an isomorphism.
	Thus  $\R\cong \k_d=\k[t]/(t^d)$  is a $t$-adic ring and the classes in $\k_d/\k_d^\times$ have representatives $1,t,\dots,t^{d-1}$ and $0$. The result follows.
\end{proof}

\begin{proposition} \label{loop}  There are infinitely many non-isomorphic locally free indecomposable representations for a quiver whose underlying graph is not a tree over any local Frobenius $\k$-algebra $\R$. 
\end{proposition}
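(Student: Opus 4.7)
The plan is to exhibit, in each of the two configurations forcing $Q$ to contain a cycle in its underlying graph, an explicit infinite family of pairwise non-isomorphic locally free indecomposable representations of $Q$ over $\R$. Since the underlying graph is not a tree, $Q$ contains either a loop at some vertex $i$, or a subquiver whose underlying graph is an $n$-cycle on distinct vertices $i_1,\dots,i_n$ with $n\geq 2$. In each case I will place rank $0$ at every vertex not involved, so that every arrow not part of the loop or cycle is forced to the zero map and drops out of the analysis.

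In the loop case, I take $\alpha(i)=1$ and $\alpha=0$ elsewhere. Then $\rep^\alpha(Q,\R)\cong\R$ (the value of the loop), and $\G_\alpha(\R)=\R^\times$ acts on $\R$ by conjugation, which is trivial since $\R$ is commutative. So isomorphism classes biject with $\R$, a set of cardinality at least $|\k|=\infty$, and each such representation has endomorphism ring $\R$, which is local, so is indecomposable.

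In the cycle case, I first apply Theorem~\ref{indeptheo} together with Lemma~\ref{d=0} to reduce to the situation where the $n$-cycle is coherently oriented, $i_1\xrightarrow{a_1}i_2\xrightarrow{a_2}\cdots\xrightarrow{a_n}i_1$. (Theorem~\ref{indeptheo} gives $a_\alpha(\R Q)=a_\alpha(\R Q')$, since $\R$ is local, split, and $\k$ contains a primitive $|\alpha|$-th root of unity; Lemma~\ref{d=0} lets me convert the pair $(c,d)$ back into a statement about cardinality of the orbit space.) Set $\alpha(i_k)=1$ for all $k$ and $\alpha=0$ elsewhere; a representation is then a tuple $(r_1,\dots,r_n)\in\R^n$ on the cycle arrows (any other arrows between the cycle vertices may be set to zero, giving a $\G_\alpha(\R)$-stable subfamily whose orbits inject into those of $\rep^\alpha(Q,\R)$). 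For $(r_k)\in(\R^\times)^n$, an endomorphism $(c_1,\dots,c_n)\in\R^n$ satisfies $r_k(c_{k+1}-c_k)=0$, so by commutativity all $c_k$ coincide and the endomorphism ring is the diagonal $\R$, which is local: the representation is indecomposable. Moreover the product $r_1\cdots r_n\in\R^\times$ is invariant under the action $(u_k)\cdot(r_k)=(u_{k+1}r_k u_k^{-1})$, and a telescoping choice of the $u_k$ shows it is a complete invariant; hence these orbits biject with $\R^\times\supset\k^\times$, an infinite set.

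The only delicate step is the use of Theorem~\ref{indeptheo} in the algebraically closed setting, where it gives equality of pairs $(c,d)$ rather than equality of orbit cardinalities; Lemma~\ref{d=0} is exactly what bridges the two. Everything else reduces to straightforward commutative algebra over the local ring $\R$.
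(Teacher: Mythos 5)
Your route is genuinely different from the paper's. The paper disposes of the whole proposition in one stroke: a connected quiver whose underlying graph is not a tree is not of ADE type, so by Gabriel's theorem it already has infinitely many pairwise non-isomorphic indecomposable representations over $\k$; viewing these constant-matrix representations as locally free representations over $\R$ via $\k\subset\R$ and reducing modulo $\mathfrak{m}$ (using that $\k\subset\R\to\R/\mathfrak{m}$ is an isomorphism onto the residue field since $\k$ is algebraically closed) shows they remain indecomposable and pairwise non-isomorphic over $\R$. You instead build explicit infinite families supported on a loop or on a cycle, with endomorphism rings computed by hand and the product of the cycle maps as a complete invariant. Your argument is more self-contained (no input from Gabriel/Kac theory) and exhibits the families concretely, at the cost of a case analysis; the paper's argument is shorter and orientation-blind from the start.

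There is, however, one step in your cycle case that does not stand as written: the reduction to a coherently oriented cycle via Theorem~\ref{indeptheo} and Lemma~\ref{d=0}. The orientation-independence of $a_\alpha$ is only asserted (and proved, via Theorem~\ref{algclofina} and the spreading-out argument) under the hypothesis that $\k$ contains a primitive $|\alpha|$-th root of unity, and for your rank vector $|\alpha|=n$ this fails exactly when ${\rm char}\,\k$ divides $n$; since the proposition must hold in every characteristic and $n$ is dictated by the graph, this leaves a genuine gap for those cases. Fortunately the detour is unnecessary: for an arbitrary orientation of the cycle, with all cycle maps $r_k\in\R^\times$ the endomorphism equations (each of the form $r_k(c_{k+1}-c_k)=0$ or $r_k(c_k-c_{k+1})=0$ depending on the direction of the $k$-th arrow) still force $c_1=\cdots=c_n$, so the representation is indecomposable, and the signed product $\prod_k r_k^{\varepsilon_k}$ with $\varepsilon_k=\pm1$ according to orientation is invariant under $\G_\alpha(\R)$ and is a complete invariant by the same telescoping choice of the $u_k$; it again takes infinitely many values in $\R^\times\supset\k^\times$. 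With that replacement (and your loop case, which is fine as is), the proof is complete without ever invoking Theorem~\ref{indeptheo}.
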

\begin{proof} This clearly holds over $\k$ as such a graph will contain a loop, and we know from Gabriel's theorem that a loop has infinitely many non-isomorphic indecomposable representations. Embedding $\k\subset \R$ as the subalgebra of constants, the same non-isomorphic indecomposable locally-free representations will have the same property over $\R$ as any isomorphism of quiver representations over $\R$ will induce one over the residue field $\R/\mathfrak{m}$ and $\k\subset \R\to \R/\mathfrak{m}$ maps isomorphically onto the residue field as $\k$ is algebraically closed. 
\end{proof}

\begin{proposition}\label{d4} There are infinitely many non-isomorphic locally free indecomposable representations for a quiver of type $D_4$ over $\k_d$ for $d>1$ and $\k$ algebraically closed. In particular, the same is true for any quiver with a vertex with at least three neighbours.
	\end{proposition}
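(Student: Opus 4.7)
The strategy is to reduce first to the case $d=2$ via a lifting argument, then apply Theorem~\ref{main2} to pass from locally free $\k[\epsilon]D_4$-modules to modules over the preprojective algebra $\Pi_\k(D_4)$, and finally exhibit an explicit $\P^1$-family of pairwise non-isomorphic indecomposable $\Pi_\k(D_4)$-modules; the ``in particular'' clause then follows by extension by zero.

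For the lift from $d=2$ to arbitrary $d\geq 2$: given a locally free $\k_2 Q$-module $N$ of rank vector $\alpha$ with structure maps $N_a=f_a+\epsilon\,g_a$ (with $f_a,g_a$ being $\k$-linear), I build a locally free $\k_d Q$-module $\tilde N$ by taking $\tilde N_i=\k_d^{\alpha(i)}$ with structure maps $\tilde N_a=\tilde f_a+t\,\tilde g_a$, where $\tilde f_a,\tilde g_a$ denote the $\k_d$-linear extensions of $f_a,g_a$. Reduction modulo $t^2$ recovers $N$, and Nakayama's lemma over the local ring $\k_d$ shows that any non-trivial decomposition $\tilde N=L\oplus L'$ descends to a non-trivial decomposition of $N$ (since $L=t^2L$ iterates to $L=t^{2k}L=0$ for $2k\ge d$), and that any isomorphism of lifts descends to an isomorphism of reductions. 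Thus $N\mapsto\tilde N$ injects indecomposable isomorphism classes over $\k_2 Q$ into those over $\k_d Q$, reducing the proposition to the case $d=2$.

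For $d=2$, I would apply Theorem~\ref{main2} with $\R=\k$ (a local split Frobenius $\k$-algebra; since $\overline\k=\k$, absolute indecomposability coincides with indecomposability) to obtain $a_\alpha(\k[\epsilon]D_4)=a_\alpha(\Pi_\k(D_4))$ for every $\alpha$, so it suffices to produce infinitely many pairwise non-isomorphic indecomposable $\Pi_\k(D_4)$-modules. I orient $D_4$ with arrows $a_i\colon 0\to i$ emanating from the central vertex $0$ to each leaf $i\in\{1,2,3\}$ and consider dimension vector $(2,1,1,1)$. A representation of $\Pi_\k(D_4)$ of this rank consists of vectors $A_i:=M_{a_i^\ast}\in V_0=\k^2$ and covectors $B_i:=M_{a_i}\in V_0^\ast$ satisfying the preprojective relations $B_iA_i=0$ and $\sum_{i=1}^{3} A_iB_i=0$. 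Take the family $A_1=A_2=A_3=e_1$ and $B_i=b_i\,e_2^\ast$ with $b_1+b_2+b_3=0$; both relations hold automatically. A case analysis of direct-sum decompositions $V_0=W_0\oplus W_0'$ (using that all three $A_i=e_1$ must lie in a single summand, forcing that summand to contain $e_1$, and that the complementary summand must then be killed by every $B_i$, so $W_0'\subseteq \bigcap_i\ker B_i\subseteq \k e_1$, giving $W_0'=0$) shows that the representation is indecomposable whenever $(b_1,b_2,b_3)\neq 0$. Computing the stabilizer in $\GL(V_0)\times(\k^\times)^3$ of the collinear tuple $(e_1,e_1,e_1)$ shows that only a common rescaling of the $b_i$'s is available; thus isomorphism classes correspond to points $[b_1:b_2:b_3]$ on the line $b_1+b_2+b_3=0$ inside $\P^2$, giving an infinite $\P^1$-family since $\k$ is infinite.

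For the ``in particular'' clause, let $Q$ have a vertex $v$ with neighbours $v_1,v_2,v_3$; then $Q$ contains a subquiver $Q'$ whose underlying graph is $D_4$ and is centred at $v$. Extension by zero on the vertices and arrows outside $Q'$ defines a functor from $Q'$-representations to $Q$-representations that preserves indecomposability (a summand of an extended representation must vanish outside $Q'$, so it is itself the extension of a summand of the $Q'$-module) and non-isomorphism. By Theorem~\ref{main1} the count is orientation-independent on $Q'$, so the infinite $D_4$ family transports to $Q$. The main obstacle will be the indecomposability verification in the $\P^1$-family: one must rule out direct-sum decompositions of $V_0=\k^2$ not coming from the standard basis $\{e_1,e_2\}$, which demands a careful but elementary case analysis.
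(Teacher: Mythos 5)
Your argument is essentially correct, but it takes a genuinely different route from the paper's. The paper proves the proposition directly over $\k_d$ for all $d>1$ at once: after invoking Theorem~\ref{main1} to reduce to the orientation with all arrows pointing into the central vertex, it considers the rank-$(2,1,1,1)$ representations $M_\lambda$ given by the three vectors $e_0,e_1,e_\lambda\in\k_d^2$ with $e_\lambda=(1,\lambda t^{d-1})^{\mathrm T}$, and a short $\GL_2(\k_d)$ computation (any matrix fixing $e_0$ and $e_1$ up to units fixes every $e_\lambda$ up to a unit) shows that the $M_\lambda$, $\lambda\in\k$, are pairwise non-isomorphic indecomposables. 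You instead (i) reduce $d$ to $2$ by base change along $\k_d\to\k_d/(t^2)\cong\k_2$ (this is correct: the functor preserves direct sums and isomorphisms and, by Nakayama, kills no nonzero finitely generated module, so your lifts stay indecomposable and pairwise non-isomorphic), (ii) use Theorem~\ref{main2} with $\R=\k$ together with Lemma~\ref{d=0} to transfer infinitude from $\Pi_\k(D_4)$ to $\k[\epsilon]D_4$, and (iii) exhibit the $\P^1$-family of indecomposable $\Pi_\k(D_4)$-modules of dimension $(2,1,1,1)$; your relations, indecomposability analysis and stabilizer computation are all right, and since the preprojective side is orientation-free your route covers every orientation of $D_4$ without the extra appeal to Theorem~\ref{main1} you make at the end. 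Your explicit extension-by-zero treatment of the ``in particular'' clause is also fine. What the paper's route buys is that it is elementary, self-contained and uniform in $d$ and in the characteristic; what yours buys is the conceptual link to the (tame) preprojective algebra, in the spirit of how the paper itself treats $A_4$ over $\k_2$.

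There is one genuine, though patchable, gap: the $a_\alpha$-statements of Theorems~\ref{main1} and~\ref{main2} are only proved under the hypothesis that $\k$ contains a primitive $|\alpha|$-th root of unity, and here $|\alpha|=5$; so step (ii) is not available over an algebraically closed field of characteristic $5$. The paper's explicit family over $\k_d$ sidesteps this for the chosen orientation, and you could patch your proof the same way, e.g.\ by writing your $\Pi$-family directly as a family over $\k_2$ (or $\k_d$) rather than going through Theorem~\ref{main2}. Two smaller points: the parenthetical ``$L=t^2L$ iterates to $L=t^{2k}L=0$'' is garbled --- what you need is simply that a nonzero finitely generated $\k_d$-module has nonzero reduction modulo $t^2$; and the ``main obstacle'' you flag at the end is not one, since your own case analysis already treats arbitrary decompositions $V_0=W_0\oplus W_0'$: the one-dimensional leaf spaces force $e_1$ into a single summand and the complementary summand into $\bigcap_i\ker B_i$.
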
 

To prove Proposition~\ref{d4} we need the following

\begin{lemma} Let $d>1$, $A\in \GL_2(\k_d)$ and   $$e_\lambda=\left[\begin{array}{c}1 \\ \lambda t^{d-1} \end{array}\right]\in \k_d^2$$ for $\lambda\in \k$. Assume that $Ae_0=\mu_0 e_0$ and $Ae_1=\mu_1 e_1 $ for some $\mu_0,\mu_1\in \k_d^\times$. Then, for all $\lambda\in\k$, there exists $\mu_\lambda\in \k_d^\times$ such that $Ae_\lambda=\mu_\lambda e_\lambda $. 
\end{lemma}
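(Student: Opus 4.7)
The plan is to leverage the $\k$-linear relation $e_\lambda = (1-\lambda)\,e_0 + \lambda\,e_1$, which is immediate from the definitions since $(1-\lambda)\cdot 0 + \lambda\cdot t^{d-1} = \lambda t^{d-1}$. Because $A$ is $\k_d$-linear it is in particular $\k$-linear, so applying $A$ to this decomposition and using the two hypotheses gives
\[
A e_\lambda \;=\; (1-\lambda)\mu_0\, e_0 + \lambda\mu_1\, e_1 \;=\; \mu_0\, e_\lambda + \lambda(\mu_1 - \mu_0)\, e_1.
\]
The task then reduces to showing that the ``error term'' $\lambda(\mu_1-\mu_0)\,e_1$ is itself a $\k_d$-multiple of $e_\lambda$, and that the resulting total scalar lies in $\k_d^\times$.

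To locate $\mu_1-\mu_0$, I would write $A=\begin{pmatrix}a_{11}&a_{12}\\ a_{21}&a_{22}\end{pmatrix}$ and unpack the two given eigenvector equations. The identity $Ae_0=\mu_0 e_0$ forces $a_{11}=\mu_0$ and $a_{21}=0$; comparing first coordinates of $Ae_1=\mu_1 e_1$ then yields $\mu_1-\mu_0 = a_{12}\,t^{d-1}$. Hence $\mu_1-\mu_0$ lies in the socle $(t^{d-1})\subseteq \k_d$.

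The decisive observation is that since $d>1$ we have $t^{2(d-1)}=0$ in $\k_d$, and therefore every element $s\in(t^{d-1})$ satisfies $s\,e_\mu = s\,e_0$ for all $\mu\in\k$, as the second coordinate $s\mu t^{d-1}$ vanishes. Applied with $s=\lambda(\mu_1-\mu_0)$ this yields $\lambda(\mu_1-\mu_0)\,e_1 = \lambda(\mu_1-\mu_0)\,e_\lambda$, so that
\[
A e_\lambda \;=\; \bigl(\mu_0 + \lambda(\mu_1-\mu_0)\bigr)\, e_\lambda.
\]
Setting $\mu_\lambda := \mu_0 + \lambda(\mu_1-\mu_0)$, one has $\mu_\lambda \equiv \mu_0 \pmod{\mathfrak m}$ since $\mu_1-\mu_0\in(t^{d-1})\subseteq \mathfrak m$; as $\mu_0\in\k_d^\times$ by assumption, so is $\mu_\lambda$. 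I do not anticipate a real obstacle: the hypothesis $d>1$ enters precisely through the annihilation $t^{d-1}\cdot t^{d-1}=0$, which is exactly what lets socle elements ``absorb'' the difference between $e_0$, $e_1$, and $e_\lambda$.
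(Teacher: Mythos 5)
Your proof is correct, and it is organized a bit differently from the paper's. The paper argues entirely in coordinates: from $Ae_0=\mu_0e_0$ it gets $a_{21}=0$, computes $Ae_\lambda$ explicitly, observes (using $t^{2(d-1)}=0$) that for $\lambda\neq 0$ the eigenvector condition is equivalent to the single criterion $a_{11}-a_{22}\in t\k_d$, and then notes that the existence of $\mu_1$ forces exactly this criterion, so all $e_\lambda$ are simultaneously eigenvectors. You instead interpolate: you write $e_\lambda=(1-\lambda)e_0+\lambda e_1$, apply linearity, and absorb the error term $\lambda(\mu_1-\mu_0)e_1$ into a multiple of $e_\lambda$ after locating $\mu_1-\mu_0=a_{12}t^{d-1}$ in the socle, where $s\,e_\mu=s\,e_0$ for every $\mu$ because $t^{2(d-1)}=0$. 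The underlying engine is the same vanishing $t^{d-1}\cdot t^{d-1}=0$ (this is where $d>1$ enters in both arguments), but your route avoids deriving the criterion on $a_{11}-a_{22}$ (indeed you never really need $a_{21}=0$, only the first-coordinate comparison), it produces the explicit formula $\mu_\lambda=\mu_0+\lambda(\mu_1-\mu_0)$, and it spells out why $\mu_\lambda\in\k_d^\times$ (via $\mu_\lambda\equiv\mu_0\bmod\mathfrak m$), a point the paper leaves implicit. The paper's version, in exchange, isolates a clean necessary-and-sufficient condition on $A$ that explains why one extra eigenvector $e_1$ suffices. Both are complete; yours is a legitimate, slightly more structural variant of the same elementary computation.
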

\begin{proof} Let $A=[a_{ij}]_{i,j\in \{1,2\}}$. As $Ae_0=\mu_0 e_0$ we have $a_{21}=0$. Then $$Ae_\lambda= \left[\begin{array}{c} a_{11}+a_{12}\lambda t^{d-1}  \\ a_{22} \lambda t^{d-1}  \end{array}\right].$$ If $Ae_\lambda=\mu_\lambda e_\lambda$ then $$\mu_\lambda =a_{11}+\lambda a_{12}t^{d-1}$$ and $$a_{22}\lambda t^{d-1}=(a_{11}+a_{12}\lambda t^{d-1} ) \lambda t^{d-1}.$$ When $\lambda\neq 0$ this is equivalent with \beq{ide}a_{11}-a_{22} \in t\k_d.\eeq By assumption $\mu_1$ exists, this implies \eqref{ide}, and in turn gives the result. 
\end{proof}

\paragraph{\it Proof of Proposition~\ref{d4}.} By our main Theorem~\ref{main1}, we can assume that the arrows in the $D_4$ quiver are all oriented towards the central vertex labelled by $1$. Then a locally free representation of rank vector $(2,1,1,1)$ can be given by three vectors in $\k_d^2$. Let $\M_\lambda$ be the representation of this $D_4$ quiver given by  the three vectors $e_0,e_1,e_\lambda\in \k_d^2$. Clearly $M_\lambda$ is indecomposable.   Then $M_{\lambda_1}\cong M_{\lambda_2}$ implies the existence of an $A\in \GL_2(\k_d)$ taking $(e_0,e_1,e_{\lambda_1})$ to the triple $(e_0,e_1,e_{\lambda_2})$ up to scaling by invertible elements in $\k_d^\times$. By the above Lemma this implies that $e_{\lambda_1}$ and $e_{\lambda_2}$ differ by scaling with an invertible element in $k_d^\times$. This in turn implies $\lambda_1=\lambda_2$ showing that $M_\lambda$ is an infinite family of pairwise non-isomorphic indecomposable representations of our quiver. The result follows. 
\hfill\(\Box\) 

\paragraph{\it Proof of Theorem~\ref{classif}.} By Proposition~\ref{a2finite} if the Frobenius $\k$-algebra $\R$ is not isomorphic to $\k_d$ for some $d$, then for a quiver $Q$ of type $A_2$ we have infinitely many pairwise non-isomorphic  representations of rank vector $(1,1)$. The same holds for any quiver with a proper edge. When there are only edge loops in the quiver Proposition~\ref{loop} shows that there will be infinitely many pairwise non-isomorpic representations over any local Frobenius $\k$-algebra $\R$.

Thus we can assume that $\R\cong \k_d$ for some $d$. If a quiver $Q$ has finite locally free representation type over $\k_d$ then by Proposition~\ref{loop} it has to be a tree and in turn by Proposition~\ref{d4} it has to be linear, in other words of type $A_n$ for some $n\geq 0$.

  As $a_\alpha(\k_dQ)$ is independent of the orientation of the quiver by our main Theorem~\ref{main1} we can assume that our $Q$ is linearly oriented. Then \cite[Proposition 11.1]{GLS} implies\footnote{We thank Bernard Leclerc for pointing out to us this argument from \cite{GLS}.} that the number of isomorphism classes of non-injective locally free indecomposable representations of $\k_dQ$  agrees with the number of classes of all (i.e. not neccessarily locally-free) indecomposable representations of $\k_dQ_0$ where $Q_0$ is the subquiver of type $A_{n-1}$ of $Q$ leaving out the final vertex of $Q$. Then in turn \cite[Proposition 13.1]{GLS} finishes the proof of Theorem~\ref{classif}.\hfill\(\Box\)

Below we discuss the finite locally free representation type quivers of Theorem~\ref{classif} in more detail.

\subsection{$A_2$ over $\k_d$ }\label{a2kd} When the quiver $Q$ is of
type $A_2$, i.e. two vertices with one arrow, say, $1\to 2$ then a
representation of it over $\k_d$ of rank $(m,n)$ is given by an
$n\times m$ matrix over $\k_d$. Moreover two matrices correspond to
isomorphic representations if and only if they are equivalent using
elementary row and column operations. As $\k_d$ is a principal ideal
ring we can take the matrix to Smith normal form by \cite[Theorem
15.9]{brown}. This shows that the
indecomposable representations have rank vector $(1,1)$ , $(1,0)$ or
$(0,1)$. According to Proposition~\ref{a2finite} up to isomorphism the
rank $(1,1)$ indecomposable matrices are given by the $d$ matrices
$[t^i]$ for $i=0\dots d-1$ and there are the trivial zero
representations of rank $(1,0)$ and $(0,1)$. Thus we have
\begin{theorem} If the quiver $Q$ is of type $A_2$ then
$$
a_{\alpha}(\k_d Q)=\left\{ \begin{array}{cl} 1 & \alpha=(1,0) \mbox{
or } (0,1)\\ d & \alpha=(1,1) \\ 0 & \mbox{ otherwise }\end{array} \right.
$$
\end{theorem}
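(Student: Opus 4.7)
The plan is to identify $\rep^{(m,n)}(Q,\k_d)$ with the $\k_d$-module $\M_{n \times m}(\k_d)$ of $n \times m$ matrices, on which $\G_{(m,n)}(\k_d) = \GL_m(\k_d) \times \GL_n(\k_d)$ acts by $(g,h) \cdot A = h A g^{-1}$, so that orbits are equivalence classes under invertible row and column operations. Since $\k_d$ is a principal ideal ring, the Smith normal form recalled in \S\ref{a2kd} via \cite[Theorem 15.9]{brown} produces a diagonal representative whose non-zero diagonal entries are units times $t^a$ with $0 \leq a < d$, possibly supplemented by zero rows and zero columns. This exhibits every representation as a direct sum of three kinds of blocks: a rank $(1,1)$ block $[t^a]$ with $0 \leq a < d$, a rank $(1,0)$ block (one extra zero column in the normal form), and a rank $(0,1)$ block (one extra zero row). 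In particular $a_\alpha(\k_d Q) = 0$ whenever $\alpha \notin \{(1,0),(0,1),(1,1)\}$.

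For the three remaining rank vectors the count is then straightforward. Rank $(1,0)$ and rank $(0,1)$ each contribute a single trivial isomorphism class. For $\alpha = (1,1)$, Proposition~\ref{a2finite} identifies the orbit space with $\k_d/\k_d^\times$, whose $d+1$ classes have representatives $0, 1, t, \dots, t^{d-1}$; the class of $0$ corresponds to the decomposable representation equal to the direct sum of the unique indecomposables of ranks $(1,0)$ and $(0,1)$, while the remaining $d$ classes yield the representations $M_i := [t^i]$ for $0 \leq i < d$. Each $M_i$ is indecomposable, since a direct computation gives
\begin{equation*}
\End(M_i) = \{(f_1,f_2) \in \k_d \times \k_d : (f_2-f_1)\, t^i = 0\},
\end{equation*}
which is a local ring (its non-units are exactly the pairs with $f_1 \in \m$).

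Finally, since $\k$ is algebraically closed, scalar extension from $\k_d$ to $\k_d \otimes_\k \overline{\k} = \k_d$ is trivial, so indecomposability coincides with absolute indecomposability. The explicit enumeration above also shows each orbit space in question is finite, so the dimensional component of the pair defining $a_\alpha$ vanishes and $a_\alpha$ is simply the cardinality of the orbit space (cf.\ Lemma~\ref{d=0}), giving the stated formula. The only delicate step is verifying the indecomposability of $M_i$ for $i \geq 1$, which reduces to the endomorphism-ring computation above; everything else is bookkeeping on top of Smith normal form.
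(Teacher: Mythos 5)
Your argument is correct and is essentially the paper's own proof in \S\ref{a2kd}: identify rank-$(m,n)$ representations with $n\times m$ matrices over the principal ideal ring $\k_d$, reduce to Smith normal form to see that indecomposables can only have rank $(1,0)$, $(0,1)$ or $(1,1)$, and then use Proposition~\ref{a2finite} to get the $d$ classes $[t^i]$. Your only additions are to make explicit what the paper leaves implicit — the indecomposability of $[t^i]$ via the local endomorphism ring and the reduction of the pair $(c,d)$ to an orbit count via Lemma~\ref{d=0} — which is fine and changes nothing in substance.
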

\begin{remark} We could have deduced the same result by
following the argument in the last paragraph of the proof of
Theorem~\ref{classif}. According to that the non-injective locally
free indecomposable representations of $Q$ are in bijection with the
indecomposable representations of the subquiver $Q_0$ which in this
case is just the $A_1$ quiver. There are $d$ indecomposable modules
over $\k_d$ and these correspond to the $d$ non-injective
indecomposable locally free representations of $Q$. One of them has
rank vector $(0,1)$ this corresponds to the free $\k_d$-module. The
remaining $d-1$ non-injective indecomposable locally free
representations are the ones $\k_d\stackrel{[t^i]}{\rightarrow}\k_d$
for $i=1\dots d-1$, which correspond to the $\k_d$-modules
$\coker([t^i])=\k_d/(t^i)$. (When $i=0$ the representation is
injective.)
\end{remark}

\subsection{$A_3$ over $\k_2$}

We can find all locally-free indecomposable representations of a
linearly oriented quiver $Q=1\rightarrow 2 \rightarrow 3$ of type
$A_3$ over $\k_2$ by following the argument in the last paragraph of
the proof of Theorem~\ref{classif}.  We have the three indecomposable
injective locally free representations of rank vector $(1,0,0)$,
$(1,1,0)$ and $(1,1,1)$, where all the maps are $0$ or
isomorphisms. The remaining ones are in bijection with the set of all
indecomposable representations of $Q_0$ (of type $A_2$) over
$\k_2$. We can read them off from the Auslander-Reiten quiver of
$\k_2Q_0$ in \cite[Figure 13.5.4]{GLS}.  We get $0\rightarrow \k_d$,
$\k_d\rightarrow 0$ and $\k_2/(t^i)\stackrel{[t^j]}{\rightarrow}
\k_2/(t^k)$ where $0\leq i,k<2$ and $2-k>j\geq i-k$. We have thus $9$
of such. Thus there must be $12=3+9$ indecomposable locally-free
representations of $Q$. It is easy to find them: we have the three simple
ones, four of the form
$\k_2\stackrel{[t^i]}{\rightarrow}\k_2\rightarrow 0$ and
$0\rightarrow\k_2\stackrel{[t^i]}{\rightarrow}\k_2$ for $0\leq i<2$,
four of the form
$\k_2\stackrel{[t^i]}{\rightarrow}\k_2\stackrel{[t^j]}{\rightarrow}\k_2$
with $0\leq i,j<2$ and one of the form
\beq{speci}\k_2\stackrel{\scriptsize \left[\begin{array}{c} 1 \\
0 \end{array} \right]
}{\longrightarrow}\k^2_2\stackrel{[t,1]}{\rightarrow}\k_2 \eeq
 
 \begin{theorem} \label{a3k2} When the quiver $Q$ is of type $A_3$ then we have $12$ locally free indecomposables over $\k_2$ as follows $$ a_{\alpha}(\k_2 Q)=\left\{ \begin{array}{cl} 1 & \alpha=(1,0,0), (0,1,0) \mbox{ or } (0,0,1)\\ 2 & \alpha=(1,1,0) \mbox{ or } (0,1,1)\\ 4 & \alpha=(1,1,1) \\ 1 & \alpha=(1,2,1)\\ 0 & \mbox{ otherwise }\end{array}   \right.$$ 
 \end{theorem}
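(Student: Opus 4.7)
The plan is to follow the method used to prove Theorem~\ref{classif} and to make the enumeration explicit. By Theorem~\ref{main1} I may assume $Q$ is linearly oriented, $Q\colon 1\to 2\to 3$. I would then split the locally free indecomposable $\k_2 Q$-modules into two classes: the three \emph{injective} ones, in which every arrow acts either as the identity on $\k_2$ or as zero, with rank vectors $(1,0,0)$, $(1,1,0)$ and $(1,1,1)$; and the non-injective ones.

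For the non-injective locally free indecomposables I would invoke \cite[Proposition~11.1]{GLS}, which provides a bijection with the set of \emph{all} (not necessarily locally free) indecomposable $\k_2 Q_0$-modules, where $Q_0\colon 1\to 2$ is the subquiver of type $A_2$ obtained by deleting the final vertex. By \cite[Proposition~13.1]{GLS}, or directly from the Auslander--Reiten quiver of $\k_2 Q_0$ in \cite[Figure~13.5.4]{GLS}, there are exactly nine such modules, so the expected total is $3+9=12$.

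The remaining step is to identify each of these nine with an explicit $\k_2 Q$-representation and read off its rank vector. Direct enumeration gives: the two simples at vertices $2$ and $3$ (rank vectors $(0,1,0)$ and $(0,0,1)$); one representation of rank $(1,1,0)$, namely $\k_2\stackrel{[t]}{\to}\k_2\to 0$; two of rank $(0,1,1)$, namely $0\to\k_2\stackrel{[t^i]}{\to}\k_2$ for $i=0,1$; three of rank $(1,1,1)$, namely $\k_2\stackrel{[t^i]}{\to}\k_2\stackrel{[t^j]}{\to}\k_2$ with $(i,j)\in\{0,1\}^2\setminus\{(0,0)\}$; and one of rank $(1,2,1)$, namely the representation~\eqref{speci}. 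Together with the three injectives this yields the tally $1+1+1+2+2+4+1=12$ organised by rank vector exactly as stated.

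The only subtle point is matching the rank-$(1,2,1)$ representation~\eqref{speci} with its preimage under the bijection of \cite[Proposition~11.1]{GLS} and verifying its indecomposability: this is the main obstacle, since it is the only indecomposable that is not a string module and whose rank vector exceeds $1$ at some vertex. One handles it by a direct computation of its endomorphism ring, checking that the only endomorphisms of \eqref{speci} reduce modulo the maximal ideal of $\k_2$ to scalars, so that the endomorphism ring is local and \eqref{speci} is indecomposable; local freeness at each vertex is immediate from the form of the representation.
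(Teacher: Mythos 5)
Your proposal is correct and follows essentially the same route as the paper: reduce to the linearly oriented quiver via Theorem~\ref{main1}, count the three injective locally free indecomposables plus the nine non-injective ones via the bijection of \cite[Proposition 11.1]{GLS} with indecomposable $\k_2Q_0$-modules read off the Auslander--Reiten quiver, and then exhibit the twelve representations explicitly to obtain the rank-vector tally, including the exceptional representation \eqref{speci} of rank $(1,2,1)$. Your added verification that \eqref{speci} has local endomorphism ring simply makes explicit a check the paper leaves implicit.
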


\begin{remark} We can compare Theorem~\ref{a3k2} to the number of indecomposables $a_\alpha(\Pi_\k(Q))$ of the preprojective algebra $\Pi_{\k}(Q)$. First by our main Theorem~\ref{main2}  $a_\alpha(\Pi_\k(Q))=a_\alpha(\k_2(Q))$. Indeed we find an agreement when  comparing Theorem~\ref{a3k2} above with the dimension vectors of all indecomposables in the Auslander-Reiten quiver of $\Pi_\k(Q)$ in \cite[\S 20.1]{GLS1}.
\end{remark}

\begin{remark} Just as in \ref{a2kd} we can reformulate the problem of classifying the representations of the quiver $1\to 2 \leftarrow 3$  over $\k_2$ as the classification of an $n\times m$ matrix and an $n\times k$ matrix over $\k_2$ modulo simultaneous row operations and individual column operations. The indecomposables are identical as above (just reversing the second arrow) with the exception of the last one \eqref{speci} which needs to be replaced by
	$$\k_2\stackrel{\scriptsize \left[\begin{array}{c} 1 \\ 0 \end{array} \right] }{\longrightarrow}\k^2_2\stackrel{\scriptsize \left[\begin{array}{c} 1 \\ t \end{array} \right]}{\leftarrow}\k_2.$$
	\end{remark}

This way we get a normal form for the above classification problem, similar to the Smith normal form in the previous \S \ref{a2kd}. One can prove this directly with row and column operations, similarly to the proof of Smith normal form. We know from Theorem~\ref{classif} that there is no such normal form for the same problem over $\k_d$ for $d>3$, the case of $\k_3$ will be discussed in the next section.

\begin{remark} \label{a3orientation} It is interesting to compare the full representation theories of the two $A_3$ type quivers studied previously, namely the linearly oriented $Q=1\rightarrow 2 \rightarrow 3$ and the one in the previous section $Q^\prime = 1\to 2 \leftarrow 3$. They both have finite representation type by  \cite[Proposition 13.1]{GLS}.  Indeed $Q$ has $36$, and $Q^\prime$ has $42$ indecomposable representations \cite{GLS3}, out of which we have $12$ locally free in both cases. So there seems to be no direct relationship between the full representation theories of $Q$ and $Q^\prime$, only when one restricts to the locally free ones. 
	\end{remark}

\subsection{$A_3$ over $\k_3$}

To find all  indecomposable locally free representations of the linearly oriented $A_3$ type quiver $Q$ over $\k_3$ we can proceed the same way as above. First we find all indecomposable
representations of $Q_0$ of type $A_2$ over $\k_3$ by reading them off from the Auslander-Reiten quiver of $\k_3Q_0$ computed in \cite{GLS3}. There are $27$ of them. We can compute the non-injective, indecomposable and locally free  representations of $A_3$ over $\k_3$ corresponding to them, by applying the inverse Auslander-Reiten translation as in \cite[Proposition 11.1]{GLS}. After some computations and adding the three injective locally free representations we arrive at the following
\begin{theorem}  \label{a3k3} When the quiver $Q$ is of type $A_3$ then we have $30$ locally free indecomposable representations over $\k_3$ as follows $$ a_{\alpha}(\k_3 Q)=\left\{ \begin{array}{cl} 1 & \alpha=(1,0,0), (0,1,0) \mbox{ or } (0,0,1)\\ 3 & \alpha=(1,1,0) \mbox{ or } (0,1,1)\\ 9 & \alpha=(1,1,1) \\ 5 & \alpha=(1,2,1)\\ 2 & \alpha=(2,2,1) \mbox{ or } (1,2,2)\\ 1 & \alpha=(2,2,2) \\ 2 & \alpha=(2,3,2)\\ 0 & \mbox{ otherwise }\end{array}   \right.$$ 
\end{theorem}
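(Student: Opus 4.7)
The plan is to mirror the $\k_2$ case by using the Auslander-Reiten theoretic results of Geiss-Leclerc-Schr\"oer. By Theorem~\ref{main1}, $a_\alpha(\k_3 Q)$ is independent of the orientation of $Q$, so I fix the linear orientation $Q: 1\to 2\to 3$ and write $Q_0: 1\to 2$ for the subquiver obtained by removing the sink. By \cite[Proposition 11.1]{GLS}, the inverse Auslander-Reiten translation $\tau^{-1}$ gives a bijection between isomorphism classes of indecomposable $\k_3 Q_0$-modules and non-injective indecomposable locally free $\k_3 Q$-modules, and each $\tau^{-1} M$ can be read off from the minimal injective copresentation of $M$.

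First I would enumerate all indecomposable $\k_3 Q_0$-modules. Since $\k_3 Q_0$ has finite representation type by Theorem~\ref{classif}, the Auslander-Reiten quiver of $\k_3 Q_0$ is finite; it is computed explicitly in \cite{GLS3} and yields $27$ isomorphism classes together with their dimension vectors. For each such indecomposable $M$ I would compute $\tau^{-1} M$ as a representation of $Q$ and extract its rank vector. Adding the three indecomposable injective locally free $\k_3 Q$-modules $I(1), I(2), I(3)$, of rank vectors $(1,0,0), (1,1,0), (1,1,1)$ respectively (these are the $\k_3$-duals of the indecomposable projectives $e_i\k_3 Q$), one obtains $27+3=30$ representations; grouping by rank vector produces the table.

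The main obstacle is the $27$ inverse Auslander-Reiten translation computations: most indecomposable $\k_3 Q_0$-modules are of the form $\k_3/(t^i)\to \k_3/(t^k)$ and are not themselves locally free, so one must track minimal injective copresentations and their cokernels carefully to extract the correct rank vectors of $\tau^{-1}M$ in $Q$. As a sanity check on small rank vectors, one can verify the count directly: a representation with rank $(1,1,1)$ is a pair $(a,b)\in\k_3\times\k_3$ considered modulo $(a,b)\sim(\mu a,\nu b)$ for $\mu,\nu\in\k_3^\times$; it is indecomposable iff $a\neq 0$ and $b\neq 0$, giving three classes for each of $a$ and $b$, hence $9$ indecomposables, matching $a_{(1,1,1)}(\k_3 Q)=9$. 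Similar direct enumeration confirms the entries for $(1,1,0)$ and $(0,1,1)$, providing further confidence in the final table.
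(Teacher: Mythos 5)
Your proposal follows essentially the same route as the paper: use Theorem~\ref{main1} to fix the linear orientation, invoke \cite[Proposition 11.1]{GLS} to put the non-injective locally free indecomposables of $\k_3Q$ in bijection (via inverse Auslander--Reiten translation) with the $27$ indecomposable $\k_3Q_0$-modules read off from the Auslander--Reiten quiver computed in \cite{GLS3}, carry out those translation computations to get the rank vectors, and add the three injective locally free indecomposables of rank vectors $(1,0,0)$, $(1,1,0)$, $(1,1,1)$. Your direct verification of the entries for $(1,1,0)$, $(0,1,1)$ and $(1,1,1)$ is a sound extra check, consistent with Proposition~\ref{a2finite} and the toric count of Section 7.
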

\begin{remark} We can observe that the above multiset of rank vectors of indecomposable locally free representations over $\k_3$ of a quiver of type $A_3$ is symmetric under reflection through the middle vertex, e.g. the numbers for $(2,2,1)$ and $(1,2,2)$ agree. This symmetry follows from our main Theorem~\ref{main1} that the number of locally free indecomposable representations of a given rank vector is independent of the orientation of the quiver. However this symmetry is a surprise when one arrives at these rank vectors starting from all indecomposable representations of $Q_0$ over $\k_3$ as we did above. In particular, if we leave out the rank vectors of the three injective representations $(1,0,0)$, $(1,1,0)$ and $(1,1,1)$ the symmetry of the remaining multiset is lost.  
\end{remark}
\subsection{$A_4$ over $\k_2$}

One can find the locally free indecomposable representations of a linearly oriented quiver $Q$ of type $A_4$ over $\k_2$ by the method mentioned above (c.f. Remark~\ref{a3orientation}). Indeed a program of Crawley-Boevey \cite{crawley-boevey} has determined a list of $40$ such rank vectors of indecomposables as in the 
following Theorem~\ref{a4k2}. However in this case we could alternatively use $a_\alpha(\k_2 Q)=a_\alpha(\Pi_\k(Q))$ our second main Theorem~\ref{main2} and the computation of $a_\alpha(\Pi_\k(Q))$ in \cite[20.2]{GLS1} to deduce 
\begin{theorem} \label{a4k2} When the quiver $Q$ is of type $A_4$ then we have $40$ locally free indecomposable representations over $\k_2$ as follows $$ a_{\alpha}(\k_2 Q)=\left\{ \begin{array}{cl} 1 & \alpha=(1,0,0,0), (0,1,0,0),(0,0,1,0) \mbox{ or } (0,0,0,1)\\ 2 & \alpha=(1,1,0,0), (0,1,1,0) \mbox{ or } (0,0,1,1)\\ 4 & \alpha=(1,1,1,0) \mbox{ or } (0,1,1,1)\\ 8 & \alpha=(1,1,1,1)  \\ 1 & \alpha=(1,2,1,0) \mbox{ or }(0,1,2,1)\\ 2 & \alpha=(1,2,1,1) \mbox{ or } (1,1,2,1)\\ 6 & \alpha=(1,2,2,1) \\ 1 & \alpha=(1,2,2,2)\mbox{ or } (2,2,2,1) \\ 0 & \mbox{ otherwise }\end{array}   \right.$$ 
\end{theorem}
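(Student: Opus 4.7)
The plan is to reduce the assertion directly to a known computation for the preprojective algebra of type $A_4$, as suggested in the paragraph preceding the statement. Applying Theorem~\ref{main2} with $\R=\k$ (so that $\R[\epsilon]=\k_2$), we obtain
$$a_\alpha(\k_2 Q)=a_\alpha(\k[\epsilon]Q)=a_\alpha(\Pi_\k(Q))$$
for every rank vector $\alpha\in\N^I$. By Theorem~\ref{main1} neither side depends on the orientation of $Q$, and the preprojective algebra $\Pi_\k(Q)$ is itself orientation-independent, so we may work with the linear orientation of $A_4$ throughout.

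Over the field $\k$, every finite-dimensional $\Pi_\k(Q)$-module is automatically locally free (every $\k$-vector space is free), so $a_\alpha(\Pi_\k(Q))$ counts indecomposable $\Pi_\k(Q)$-modules of dimension vector $\alpha$ up to isomorphism, with the standard identification of absolutely indecomposable with indecomposable over an algebraically closed field. Since the preprojective algebra of type $A_n$ has finite representation type for $n\le 4$, the orbit space $\mu_\k^{-1}(0)/\G_\alpha(\k)$ is finite for each $\alpha$; in particular $d=0$ in the definition of $a_\alpha(\Pi_\k(Q))$ and Lemma~\ref{d=0} applies, so the integer we need is exactly the number of indecomposables of that dimension vector.

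The remaining step is bookkeeping: read off the multiset of dimension vectors of indecomposable $\Pi_\k(Q)$-modules for $Q$ of type $A_4$ from the explicit list in \cite[\S 20.2]{GLS1}. This yields the $40$ rank vectors tabulated in the statement, with the advertised multiplicities. As a sanity check one can observe the symmetry of the multiset under the diagram automorphism of $A_4$ (reflection $\alpha\mapsto(\alpha(4),\alpha(3),\alpha(2),\alpha(1))$), which is forced by Theorem~\ref{main1} applied to the opposite orientation.

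There is essentially no obstacle beyond translating conventions: the main content is entirely packaged into Theorem~\ref{main2} on one hand and the GLS classification of indecomposables for the preprojective algebra of type $A_4$ on the other. The only thing to check carefully is that the dimension vectors listed in \cite[\S 20.2]{GLS1} correspond, under our identification, to rank vectors of locally free representations in the present notation; this is routine since over $\k$ the two notions coincide.
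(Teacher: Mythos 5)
Your argument is correct and is essentially the route the paper itself takes: it invokes Theorem~\ref{main2} with $\R=\k$ to get $a_\alpha(\k_2Q)=a_\alpha(\Pi_\k(Q))$ and then reads off the dimension vectors of the indecomposable $\Pi_\k(Q)$-modules for type $A_4$ from \cite[\S 20.2]{GLS1} (the paper mentions Crawley-Boevey's direct computation only as an alternative check). Your extra remarks on finiteness via Lemma~\ref{d=0} and the diagram-automorphism symmetry are consistent with the paper and add nothing problematic.
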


\begin{remark} Even though we found that a quiver of type $A_4$ over $\k_2$ has finite locally-free representation type with $40$ locally-free indecomposable representations, nothing like this holds for the full representation theory. Indeed, when the quiver is linearly oriented Skowro\'nski  proved \cite[Theorem]{skowronski} that the full representation type is tame. This means that there are infinitely many indecomposable representations, however they can be parametrized by up to one-dimensional families. Interestingly, when the orientation is not linear, the full representation type is wild, i.e. infinite and not tame. Thus while even the representation type of the full representation theory can change when one changes the orientation of the quiver, the subcategory of locally-free representations behaves much more uniformly.  
\label{counterexfull}\end{remark}

\begin{remark} The argument in the last paragraph of
the proof of Theorem~\ref{classif} can be combined with Skowro\'nski's result  \cite[Theorem]{skowronski} and Theorem~\ref{main1}  to find that the locally-free representation type of  a quiver of type $A_5$ over $\k_2$ is tame. By our Theorem~\ref{main2} this shows that the preprojective algebra of type $A_5$ has also tame representation type \footnote{Tameness in exact subcategories of module categories is not well established, here we simply mean that their constructible set of $1$-dimensional.}. Indeed this was the main studied example of the paper \cite[\S 14]{GLS1}, where they found that the dimension vectors of the indecomposable representations are given by a certain elliptic $E_8$ root system.  
\end{remark}

\section{Toric case}
\subsection{Indecomposable representations}
\label{toric-fmla}
Let $Q$ be a finite quiver. Let $V:=\{1,\ldots,n\}$ be the labeled set
of vertices and $E$ the set of edges of $Q$. For $d$ a positive
integer let $\k_d:=\F_q[t]/(t^d)$, where $\F_q$ is a finite field of
cardinality $q$. To alleviate the notation we will denote by $\calR_d$
instead of $\rep^1(Q,\k_d)$ the set of representations of $Q$ over
$\k_d$ of dimension $1$ at each vertex. We will call these
representations {\it toric}. For convenience we will use the notation
$A_d(Q,q)$ instead of $a_1(\k_dQ)$ for the number of isomorphism
classes of such representations that are (absolutely)
indecomposable, and we will use the notation $G_d=\k_d^\times\times\cdots\times \k_d^\times$ instead of $\G_1(\R)$. In what follows a {\it subgraph} of $Q$ will mean a
spanning subgraph; i.e., $\Gamma\subseteq Q$ with vertex set $V$ and
edge set $E(\Gamma)\subseteq E$.  Also, the orientation of $Q$ ends up
not playing a role in what follows and we will mostly ignore it.

 To any $\phi\in \calR_d$ we can attach
two combinatorial data 
$$
[\phi]:=(\Gamma,r),
$$
 where

i) $\Gamma \subseteq Q$ is the subgraph consisting of the same vertex
set $V$ with edge set $E(\Gamma)$  those $e\in E$ such
that $\phi(e)$ is non-zero.

ii) $r:E(\Gamma)\rightarrow \Z_{>0}$ is the function that to each edge
$e\in E(\Gamma)$ assigns the number $1\leq r(e) \leq d$ such that
$(\epsilon)^{d-r(e)}$ is the ideal generated by~$\phi(e)\in \k_d$ or,
equivalently, such that the annihilator of $\phi(e)$ in $\k_d$ is
$(t)^{r(e)}$.

 The first observation is the following.
\begin{lemma}
\label{indec-repn}
i) The representation $\phi$ is indecomposable if and only if $\Gamma$ is
connected.

ii) The representations
$\phi\in \calR_d$ with fixed
$[\phi]=(\Gamma,r)$ have the same stabilizer
$G_d(\Gamma,r)\leq G_d$.\end{lemma}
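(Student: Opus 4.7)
The plan is to prove both parts by unwinding the definitions carefully, using that $\k_d$ is local (hence indecomposable as a module over itself) and that the elements $\phi(e)$ for $e\in E(\Gamma)$ are essentially determined up to units by the values $r(e)$.

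For part (i), I would first handle the easy direction: if $\Gamma$ has more than one connected component, say $V = V_1 \sqcup V_2$ with no edge of $\Gamma$ between them, then the representation splits as $\phi = \phi|_{V_1} \oplus \phi|_{V_2}$ (extended by zero), which is a nontrivial decomposition. For the converse, suppose $\phi = \phi' \oplus \phi''$ nontrivially. At each vertex $i\in V$ we have $\k_d = M'_i \oplus M''_i$ as $\k_d$-modules; since $\k_d$ is a local ring, $\k_d$ is indecomposable as a module over itself, so exactly one of $M'_i$, $M''_i$ is zero. This partitions $V$ into $V'\sqcup V''$ where $V'$ (resp.\ $V''$) collects vertices where $\phi'$ (resp.\ $\phi''$) is supported. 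For any edge $e\colon i\to j$ with $\phi(e)\neq 0$, compatibility of the direct sum decomposition with $\phi(e)$ forces $i$ and $j$ to lie in the same part, i.e.\ no edge of $\Gamma$ crosses the partition. Connectivity of $\Gamma$ then forces one part to be empty, contradicting nontriviality.

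For part (ii), I would simply compute the stabilizer explicitly. An element $g = (g_1,\dots,g_n)\in G_d$ acts on $\phi$ by $(g\cdot\phi)(e) = g_j\,\phi(e)\,g_i^{-1}$ for $e\colon i\to j$. Since $\k_d$ is commutative, $g\in\Stab_{G_d}(\phi)$ if and only if $(g_j - g_i)\,\phi(e) = 0$ for every $e\in E$. For $e\notin E(\Gamma)$ we have $\phi(e)=0$ and the condition is vacuous. For $e\in E(\Gamma)$, the annihilator of $\phi(e)$ is by definition $(t)^{r(e)}\triangleleft\k_d$, so the condition becomes $g_j \equiv g_i\pmod{t^{r(e)}}$. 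Hence
\[
G_d(\Gamma,r) := \Stab_{G_d}(\phi) = \bigl\{(g_1,\dots,g_n)\in G_d \,\big|\, g_j\equiv g_i\pmod{t^{r(e)}} \text{ for every } e\colon i\to j \text{ in } E(\Gamma)\bigr\},
\]
which depends only on the pair $(\Gamma,r)$ and not on $\phi$ itself.

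Neither step is really an obstacle; the only mild subtlety is ensuring in part (i) that the module-theoretic decomposition of the single vertex spaces is really forced to be trivial, which is where the locality of $\k_d$ enters. Once that is observed, both statements are essentially immediate from the explicit formula for the $G_d$-action on $\calR_d$.
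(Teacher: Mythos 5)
Your proof is correct and follows essentially the same route as the paper: part (ii) is the identical computation, rewriting the stabilizer condition $g_j\phi(e)g_i^{-1}=\phi(e)$ as $g_i-g_j\in(t)^{r(e)}$ via the definition of $r(e)$ as the annihilator exponent. The paper dismisses part (i) as clear; your argument via locality of $\k_d$ (so each vertex module is indecomposable, forcing a decomposition to split the vertex set along a partition not crossed by any edge of $\Gamma$) is a correct and reasonable way to fill that in.
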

\begin{proof}
  The first statement i) is clear. To prove ii) note that 
  $G_d(\Gamma,r)$ consists of those $(u_1,\ldots,u_n)\in G_d$ such
  that for all $e\in E(\Gamma)$ we have
$$
u_iu_j^{-1}\phi(e)=\phi(e)
$$
where $e$
is an edge joining the vertices $i$
and $j$
(notation: $i\stackrel{e}{\rightarrow}j$).
This condition is equivalent to
\begin{equation}
\label{stab-conditions}
u_i-u_j\in (t)^{r(e)},
\end{equation}
 which only depends on $r(e)$ and
neither on the actual value $\phi(e)$ nor on the orientation of $e$.
\end{proof}
Let $\calR_d(\Gamma,r)\subseteq \calR_d$
be the subset of representations $\phi$
with $[\phi]=(\Gamma,r)$.
By Lemma~\ref{indec-repn} the number of orbits of $G_d$
acting on $\calR_d(\Gamma,r)$
equals $\#\calR_d(\Gamma,r)/[G_d:G_d(\Gamma,r)]$.  Then
\begin{equation}
\label{no-repn}
\#\calR_d(\Gamma,r)=(q-1)^{\#E(\Gamma)}q^{\sum_{e\in E(\Gamma)}(r(e)-1)},
\end{equation}
since
$$
(t)^{d-r}
=\{t^{d-r}(x_0+x_1t+\cdots+x_{r-1}t^{r-1})\, | \,
x_0\in \F_q^\times, x_1.\ldots,x_{r-1}\in\F_q\}.
$$

To compute $|G_d(\Gamma,r)|$ is a bit more tricky. Let
$(u_1,\ldots,u_n)\in G_d(\Gamma,r)$ and let $a_e:=u_i-u_j$ if
$i\stackrel{e}{\rightarrow} j$. Then $a_e\in (t)^{ r(e)}$,
independent of the orientation of $e$. However, the $a_e$'s are not
independent of each other as for a any cycle $c$ in $  \Gamma$
their sum over all edges of $c$ is zero.

We have
$$
a_e=\sum_{k= r(e)}^{d-1}a_{e,k}t^k, \qquad \qquad a_{e,k}\in \F_q.
$$
For fixed $k$ each cycle $c$ of $  \Gamma$ yields a linear equation
\begin{equation}
\label{a-eqn}
\sum_{e\in c, r(e)\leq k} a_{e,k}=0.
\end{equation}
Conversely, it is clear that any solution to all~\eqref{a-eqn} gives
rise to differences $a_e:=u_i-u_j$ for an element
$(u_1,\ldots,u_n)\in G_d(\Gamma,r)$.  Let $\tilde \delta_k(\Gamma,r)$ be the
dimension of the vector space of such solutions
of~\eqref{a-eqn}. Since $\Gamma$ is connected (by assumption) we find
$|G_d(\Gamma,r)|=q^{\tilde \delta(\Gamma,r)}|\k_d^\times|=q^{\tilde \delta(\Gamma,r)+d-1}(q-1)$,
where $\tilde \delta(\Gamma,r)=\sum_{k=1}^{d-1}\tilde \delta_k(\Gamma,r)$.

In particular, we have the following.
\begin{proposition}
There exists a polynomial in $\Z[T]$  which depends only on $d$ and the underlying graph of $Q$ (i.e. it does not depend on the orientation of $Q$),  such that for any finite field $\F_q$, the number $A_d(Q,q)$ is the evaluation at $q$ of that polynomial (for short we say that $A_d(Q,q)$ is a polynomial in $q$).
\end{proposition}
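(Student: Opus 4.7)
The plan is to apply orbit--stabilizer to the stratification of the set of indecomposable toric representations that has already been set up. First I would verify that absolute indecomposability coincides with indecomposability in this setting: since $\overline{\k}_d=\overline{\k}[t]/(t^d)$ is again local with only the trivial idempotents $0$ and $1$, the support pair $[\phi]=(\Gamma,r)$ is unchanged by base change from $\k$ to $\overline{\k}$, and by Lemma~\ref{indec-repn}(i) both notions amount to connectedness of $\Gamma$.

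Next, Lemma~\ref{indec-repn}(ii) says that every $\phi\in\calR_d(\Gamma,r)$ has the same stabilizer $G_d(\Gamma,r)$, so all $G_d$-orbits in that stratum have the same length $[G_d:G_d(\Gamma,r)]$. Summing over pairs $(\Gamma,r)$ with $\Gamma\subseteq Q$ a connected spanning subgraph and $r:E(\Gamma)\to\{1,\dots,d\}$ gives
$$A_d(Q,q)=\sum_{(\Gamma,r)}\frac{\#\calR_d(\Gamma,r)\,|G_d(\Gamma,r)|}{|G_d|}.$$
Substituting the formulas $\#\calR_d(\Gamma,r)=(q-1)^{\#E(\Gamma)}q^{\sum_e(r(e)-1)}$, $|G_d(\Gamma,r)|=(q-1)q^{\tilde\delta(\Gamma,r)+d-1}$, and $|G_d|=(q-1)^nq^{n(d-1)}$ makes each summand a single monomial in $q$ with integer coefficient. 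As the index set is finite, $A_d(Q,q)$ is visibly a polynomial in $q$ with integer coefficients.

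It remains to show this polynomial depends only on the underlying graph of $Q$. The set of pairs $(\Gamma,r)$, together with $\#E(\Gamma)$, $\sum_e(r(e)-1)$, and $n$, are manifestly insensitive to the orientation of the arrows, so the only potentially orientation-sensitive ingredient is $\tilde\delta(\Gamma,r)$, defined as the dimension of the solution space of the cycle equations~\eqref{a-eqn}. I would handle this by the following sign-flip observation: reversing the orientation of an edge $e$ changes the sign of every occurrence of each variable $a_{e,k}$, which is an invertible change of coordinates and therefore preserves the solution-space dimension. Equivalently, a direct count shows $\tilde\delta(\Gamma,r)=\sum_{k=1}^{d-1}(c_k-1)$, where $c_k$ is the number of connected components of the spanning subgraph $(V,\{e\in E(\Gamma):r(e)>k\})$, a combinatorial invariant of $\Gamma$ viewed as an unoriented graph. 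The only mildly subtle point is this orientation-invariance of $\tilde\delta$; everything else is assembly of the combinatorics already in place.
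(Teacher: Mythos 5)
Your proof follows the same stratification-plus-Burnside route as the paper: partition $\calR_d$ by $[\phi]=(\Gamma,r)$, use Lemma~\ref{indec-repn}(ii) to see that all orbits in a stratum have equal length, compute $\#\calR_d(\Gamma,r)$ and $|G_d(\Gamma,r)|$ explicitly in $q$, and sum over the finite index set of pairs $(\Gamma,r)$. You make explicit two points the paper handles more casually, namely that absolute indecomposability reduces to indecomposability over the local ring $\overline\k_d$, and that the cycle relations defining $\tilde\delta_k(\Gamma,r)$ have orientation-independent solution dimension via the sign substitution $a_{e,k}\mapsto-a_{e,k}$; your reformulation $\tilde\delta_k(\Gamma,r)=c_k-1$ is a clean orientation-free description. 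One small imprecision: after substitution each summand equals $(q-1)^{b_1(\Gamma)}q^{\delta(\Gamma,r)}$, which is not a single monomial in $q$; to land in $\Z[q]$ you should also note that $b_1(\Gamma)\ge 0$ (as $\Gamma$ is connected and spanning) and that $\delta(\Gamma,r)=\sum_{k=1}^{d-1}\left[b_1(\Gamma)-b_1(\Gamma_k)\right]\ge 0$ (since contracting edges of a connected graph cannot increase $b_1$), both of which the paper records implicitly when it asserts that $R_d(\Gamma,q)$ has non-negative coefficients.
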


We can give a simple expression for $\tilde \delta(\Gamma,r)$
in terms of certain associated graphs.  The function $ r$
determines a filtration $\calE$
of length $d$ on the set of edges $E(\Gamma)$ of $ \Gamma$.
\begin{equation}
\label{filtr}
\calE: \quad \emptyset=E_0\subseteq E_1\subseteq \cdots \subseteq E_d=E(
\Gamma),
\end{equation}
where $E_k:=\{e \in E(\Gamma) \,|\,  r(e)\leq k\}$

For $1\leq k \leq d-1$ let $ \Gamma_k$ be the graph obtained from
$ \Gamma$ by contracting every edge in
$E( \Gamma)\setminus E_k= \{e \in E(\Gamma) \,|\, r(e)>k\}$.  Note
that all edges with $ r(e)=d$ get contracted for every $k$.  The edge
set of $ \Gamma_k$ is $E_k$. Then
$\tilde \delta_k(\Gamma,r)=\#E_k-b_1( \Gamma_k)$ where $b_1(\Gamma_k)$ is the first Betti number of the graph $\Gamma_k$. 
Together with~\eqref{no-repn} this gives
$\#\calR_d(\Gamma,r)/[G_d:G_d(\Gamma,r)]=(q-1)^{b_1(\Gamma)}q^{ \delta(\Gamma,r)}$, where 
$$
 \delta(\Gamma,r):=\sum_{k=1}^d(k-1)\#(E_k\setminus
  E_{k-1}) + \sum_{k=1}^{d-1}\left[\#E_k-b_1(
  \Gamma_k)-n+1\right].
$$
This quantity simplifies to

\begin{equation}
\label{m-defn}
\delta(\Gamma,r)=\sum_{k=1}^{d-1}[b_1( \Gamma)-b_1(  \Gamma_k)].
\end{equation}

In summary we have the following.
\begin{proposition}
\label{R-interpr}
The number of isomorphism classes of toric representations of $\Gamma$
where all maps between the $1$-dimensional spaces are non-zero scalars
has the form $(q-1)^{b_1(\Gamma)}R_d(\Gamma,q)$, where
\begin{equation}
\label{R-defn}
R_d(\Gamma,q):=\sum_rq^{\delta(\Gamma,r)}
\end{equation}
is a polynomial with non-negative integer coefficients, monic of
degree $(d-1)b_1(\Gamma)$.
\end{proposition}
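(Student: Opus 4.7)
The plan is to sum the per-stratum orbit count derived above over all profiles $r$, and then read off the polynomial properties from the resulting expression. By Lemma~\ref{indec-repn}(ii), each stratum $\calR_d(\Gamma,r)$ has constant stabilizer $G_d(\Gamma,r)$, so the number of $G_d$-orbits on it equals $\#\calR_d(\Gamma,r)/[G_d:G_d(\Gamma,r)]$; combining \eqref{no-repn} with the count $|G_d(\Gamma,r)|=q^{\tilde\delta(\Gamma,r)+d-1}(q-1)$ yields the orbit count $(q-1)^{b_1(\Gamma)}q^{\delta(\Gamma,r)}$, as already noted in the text. Since the data $[\phi]=(\Gamma,r)$ partitions the set of representations with all edges of $\Gamma$ nonzero, summing over $r\colon E(\Gamma)\to\{1,\dots,d\}$ and factoring out $(q-1)^{b_1(\Gamma)}$ produces the claimed factored form, with $R_d(\Gamma,q)=\sum_r q^{\delta(\Gamma,r)}$ manifestly a polynomial in $q$ with integer coefficients.

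To see non-negativity of these coefficients I would prove $\delta(\Gamma,r)\geq 0$ via the behaviour of $b_1$ under edge contraction. Each $\Gamma_k$ is obtained from $\Gamma$ by contracting the edges with $r(e)>k$; contracting a non-loop edge preserves the first Betti number, while contracting a loop strictly decreases it. Hence $b_1(\Gamma_k)\leq b_1(\Gamma)$, every summand in \eqref{m-defn} is non-negative, and so is $\delta(\Gamma,r)$.

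For the degree $(d-1)b_1(\Gamma)$ the same monotonicity applied along the nested filtration gives $b_1(\Gamma_k)\leq b_1(\Gamma_{k+1})\leq\cdots\leq b_1(\Gamma_d)=b_1(\Gamma)$, since $\Gamma_k$ is a further edge-contraction of $\Gamma_{k+1}$. Therefore $\delta(\Gamma,r)\leq(d-1)b_1(\Gamma)$, with equality iff $\Gamma_k$ is a tree for every $1\leq k\leq d-1$; because $\Gamma_k$ is a contraction of $\Gamma_{d-1}$ and contractions of trees are trees, this reduces to the single requirement that $\Gamma_{d-1}$ be a tree. The profile $r\equiv d$ clearly satisfies this, since then $E_k=\emptyset$ and $\Gamma_k$ is a single point for all $k<d$, so the maximum is attained and the degree is exactly $(d-1)b_1(\Gamma)$. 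For monicity one must further show that $r\equiv d$ is the \emph{unique} profile producing a tree in $\Gamma_{d-1}$; I expect this uniqueness to be the main graph-theoretic obstacle, and I would attack it by a matroid-theoretic analysis of the graphic matroid of $\Gamma$, arguing that any edge $e$ with $r(e)<d$ survives as an edge of $\Gamma_{d-1}$ and, together with the contractions induced by the edges with $r(\cdot)=d$, is forced either to become a loop or to close up a cycle in $\Gamma_{d-1}$, contradicting the tree condition.
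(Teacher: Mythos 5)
Everything up to monicity is fine and matches the paper's (largely implicit) argument: the stratum-by-stratum orbit count, the non-negativity of $\delta(\Gamma,r)$, the bound $\delta(\Gamma,r)\le(d-1)b_1(\Gamma)$, and its attainment at $r\equiv d$. The genuine gap is exactly the step you defer, the uniqueness of the maximizing profile, and the argument you sketch for it cannot work: an edge $e$ with $r(e)<d$ that is a \emph{bridge} of $\Gamma$ (lies on no cycle) survives into $\Gamma_{d-1}$ as a bridge, so it neither becomes a loop nor closes a cycle, and $\Gamma_{d-1}$ can be a tree without $r$ being identically $d$. Indeed, writing $\Gamma[S]$ for the spanning subgraph with edge set $S$, Lemma~\ref{b_1-char} gives $b_1(\Gamma_k)=b_1(\Gamma)-b_1\bigl(\Gamma[\{e:r(e)>k\}]\bigr)$, so \eqref{m-defn} becomes $\delta(\Gamma,r)=\sum_{k=1}^{d-1}b_1\bigl(\Gamma[\{e:r(e)>k\}]\bigr)$, which depends only on the values of $r$ on non-bridge edges. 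The maximum $(d-1)b_1(\Gamma)$ is therefore attained precisely by the profiles equal to $d$ on every non-bridge edge and arbitrary on bridges, so the top coefficient of $R_d(\Gamma,q)$ is $d^{\beta(\Gamma)}$ with $\beta(\Gamma)$ the number of bridges: for the two-vertex one-edge graph $R_d(\Gamma,q)=d$, and for a triangle with a pendant edge $R_2(\Gamma,q)=2q+14$.

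So for $d\ge2$ the uniqueness you hoped to prove, and with it monicity as stated, fails whenever $\Gamma$ has a bridge, and holds exactly when $\Gamma$ is $2$-edge-connected (then $r(e)<d$ on any edge forces $b_1\bigl(\Gamma[\{e:r(e)=d\}]\bigr)<b_1(\Gamma)$, i.e.\ $b_1(\Gamma_{d-1})>0$). Be aware that the paper's own proof is the same one-line assertion that only $r\equiv d$ contributes in top degree, so it silently carries the same bridgeless hypothesis (harmless for the bridgeless examples $C_n$ and $S_m$ treated afterwards, but not, say, for the single-edge graph, where $R_d=d$ as the paper's own table of rational functions confirms). Your instinct that uniqueness is the crux was therefore correct, but the repair is not a finer matroid analysis: one must either restrict to bridgeless $\Gamma$ or replace ``monic'' by ``leading coefficient $d^{\beta(\Gamma)}$'' (with the corresponding adjustment to the monicity claim for $A_d(Q,q)$).
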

\begin{proof}
To see that $R_d(Q,q)$ is monic of degree $(d-1)b_1(Q)$ note that the
  contributions in the sum on the right hand side of~\eqref{R-defn} of
  largest degree are those for which $r(e)=d$ for every edge.
\end{proof}

We have the following.
\begin{proposition}
 i) The polynomial $A_d(Q,q)$ is monic with integer coefficients and
  of degree $db_1(Q)$ and has the following expression
\begin{equation}
\label{A-fmla}
A_d(Q,q):=\sum_{(\Gamma,r)}(q-1)^{b_1(\Gamma)}q^{
\delta(\Gamma,r)},
\end{equation}
where the sum is over all pairs $(\Gamma,r)$ with $\Gamma\subseteq Q$
connected.  Equivalently,
\begin{equation}
\label{A-fmla-1}
A_d(Q,q)=\sum_\Gamma (q-1)^{b_1(\Gamma)}R_d(\Gamma,q),
\end{equation}
where the sum is over all connected subgraphs $\Gamma\subseteq Q$.

ii) Let $t(Q)$ be the number of spanning trees of $Q$ then
$$
A_d(Q,1)=d^{n-1}t(Q).
$$
(In both statements we disregard the orientation of $Q$.)
\end{proposition}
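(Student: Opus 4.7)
The formula in (i) is obtained by assembling the combinatorial pieces developed immediately before the statement. First, partition $\rep^1(Q,\k_d)$ into the strata $\calR_d(\Gamma,r)$ indexed by pairs $(\Gamma,r)$. By Lemma~\ref{indec-repn}(i) the indecomposable representations are precisely those lying in a stratum with $\Gamma$ connected; since the combinatorial invariant $(\Gamma,r)$ is insensitive to base change to $\overline{\F}_q$, indecomposable here coincides with absolutely indecomposable, so these strata really compute $A_d(Q,q)$. The $G_d$-action rescales each $\phi(e)$ by a unit, hence preserves both the support $\Gamma$ and the ideals $(\phi(e))$, so each $\calR_d(\Gamma,r)$ is $G_d$-stable; Lemma~\ref{indec-repn}(ii) then gives the common stabilizer $G_d(\Gamma,r)$, and orbit--stabilizer yields orbit count $\#\calR_d(\Gamma,r)/[G_d:G_d(\Gamma,r)]$. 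The explicit values of $|\calR_d(\Gamma,r)|$ from~(\ref{no-repn}) and of $|G_d(\Gamma,r)|=q^{\tilde\delta(\Gamma,r)+d-1}(q-1)$ derived from the linear system~(\ref{a-eqn}), combined with $|G_d|=((q-1)q^{d-1})^n$, simplify the ratio to $(q-1)^{b_1(\Gamma)}q^{\delta(\Gamma,r)}$ (using $|E(\Gamma)|-n+1=b_1(\Gamma)$ and the definition of $\delta(\Gamma,r)$ in~(\ref{m-defn})). Summing over pairs $(\Gamma,r)$ with $\Gamma$ connected gives~(\ref{A-fmla}), and grouping the inner sum over $r$ into $R_d(\Gamma,q)$ gives the equivalent form~(\ref{A-fmla-1}).

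For the polynomial properties, Proposition~\ref{R-interpr} makes $(q-1)^{b_1(\Gamma)}R_d(\Gamma,q)$ a monic polynomial with integer coefficients of degree $b_1(\Gamma)+(d-1)b_1(\Gamma)=db_1(\Gamma)$. Among connected spanning subgraphs $\Gamma\subseteq Q$ one has $b_1(\Gamma)=|E(\Gamma)|-n+1\leq b_1(Q)$ with equality iff $\Gamma=Q$, so only the $\Gamma=Q$ summand contributes in top degree $db_1(Q)$, where it is monic. Therefore $A_d(Q,q)$ is monic of degree $db_1(Q)$ with integer coefficients.

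For (ii), evaluate~(\ref{A-fmla-1}) at $q=1$. The factor $(q-1)^{b_1(\Gamma)}$ annihilates every summand with $b_1(\Gamma)>0$, so only connected spanning subgraphs with $b_1(\Gamma)=0$ survive, namely the spanning trees. For a spanning tree $\Gamma$ and any filtration~(\ref{filtr}) determined by $r$, the contracted graph $\Gamma_k$ is a contraction of a tree and is itself a tree, so $b_1(\Gamma_k)=0$ for all $k$ and hence $\delta(\Gamma,r)=0$ identically. This gives $R_d(\Gamma,1)=\sum_r 1=d^{|E(\Gamma)|}=d^{n-1}$, and summing over the $t(Q)$ spanning trees yields $A_d(Q,1)=d^{n-1}t(Q)$.

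The main bookkeeping step is verifying that the orbit--stabilizer quotient exactly equals $(q-1)^{b_1(\Gamma)}q^{\delta(\Gamma,r)}$; once $|G_d(\Gamma,r)|$ has been expressed via the cycle equations~(\ref{a-eqn}), this is already a direct simplification carried out in the excerpt, so the remaining work is simply reassembling the pieces and analyzing the top-degree contribution and the $q=1$ specialization.
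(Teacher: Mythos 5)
Your proposal is correct and follows essentially the same route as the paper: both obtain \eqref{A-fmla} by summing the orbit counts $(q-1)^{b_1(\Gamma)}q^{\delta(\Gamma,r)}$ of the strata $\calR_d(\Gamma,r)$ with $\Gamma$ connected, deduce monicity and degree $db_1(Q)$ from Proposition~\ref{R-interpr} together with the fact that only $\Gamma=Q$ contributes in top degree, and get (ii) by noting that at $q=1$ only spanning trees survive, each contributing the $d^{n-1}$ choices of $r$.
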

\begin{proof}
  i) Formulas~\eqref{A-fmla} and~\eqref{A-fmla-1} are immediate from
  the above discussion. To see that $A_d(Q,q)$ is monic of degree
  $db_1(Q)$ note that the contribution in the sum on the right hand
  side of~\eqref{A-fmla} of largest degree is for $\Gamma=Q$. The
  claim now follows from the computation of the degree of $R_d$ given
  in Proposition~\ref{R-interpr}.

ii) As for the value at $q=1$ the only contributions on the right hand
side of~\eqref{A-fmla} are when $\Gamma$ is a spanning tree. In that
case the corresponding term equals $1$ as
$b_1(\Gamma)=b_1(\Gamma_k)=0$ and there are $d^{n-1}$ choices for $r$.
This finishes the proof.
\end{proof}

\begin{remark} An equivalent formula to~\eqref{A-fmla} was proved by
  A.~Mellit (private communication), see also~\cite[Proposition 4.34]{Wyss}.
\end{remark}

As an example here is the explicit calculation of the number of
isomorphism classes of (absolutely) indecomposable representations of
dimension $(1^3)$ for the cyclic graph $C_3$ over $\k_2$ using the
above ideas.  We let $C_n$ be the graph on $n$ vertices, say
$1,2,\ldots,n$, with a single edge connecting $i$ to $i+1\bmod n$.
$$
\begin{array}{r|r|r|r|r|r|r}
m&r_{12}&r_{23}&r_{31}& |G_2(\Gamma,r)|& \#\calR_2(\Gamma,r) & \#{\rm orbits}\\
\hline
1&2&2&2&q(q-1)&q^3(q-1)^3&(q-1)q\\
3&2&2&1&q(q-1)&q^2(q-1)^3&q-1\\
3&2&1&1&q^2(q-1)&q(q-1)^3&q-1\\
1&1&1&1&q^3(q-1)&(q-1)^3&q-1\\
3&&2&2& q(q-1)&q^2(q-1)^2&1\\
6&&2&1&q^2(q-1)&q(q-1)^2&1\\
3&&1&1&q^3(q-1)&(q-1)^2&1
\end{array}.
$$

The table is a list of connected subgraphs $\Gamma \subseteq C_3$ and
the value $r_{ij}$ for each edge $ij$ of $\Gamma$ up to symmetry ($m$
denotes the number of such pairs). The other entries are:
$|G_2(\Gamma,r)|=$ the order of the stabilizer, $\#\calR_2(\Gamma,r)=$
the number of representations and
$\#\calR_2(\Gamma,r)/|G_2(\Gamma,r)|=$ the number of orbits
(isomorphism classes of representations) of the given type.  Adding
all the terms we have
$$
A_2(C_3,q)=q(q-1)+3(q-1)+3(q-1)+(q-1)+3+6+3=q^2+6q+5.
$$

In fact, it is not hard to do the calculation using~\eqref{A-fmla}
for $C_3$ and any depth $d$. We obtain the following.
$$
A_d(C_3,q)=q^d+6\sum_{k=1}^{d-1}(d-k)q^k+3d-1.
$$
We can extend this calculation to the general cyclic graph $C_n$.
\begin{proposition}
  For $n\geq 1$ let $C_n$ be the cyclic graph with vertices labelled
  $1,2,\ldots,n$ and an edge between the vertices $i$ and $i+1$ (read
  modulo $n$) for $i=1,\ldots,n$. Then for $d\geq 1$
\begin{equation}
\label{cyclic-fmla}
A_d(C_n,q)= q^d
+\sum_{k=1}^{d-1}\left[(d-k+1)^n-2(d-k)^n+(d-k-1)^n\right] q^k
+\left[-d^n+(d-1)^n+nd^{n-1}\right],
\end{equation}
a polynomial with non-negative integer coefficients. In particular,
\begin{equation}
\label{C2-fmla}
A_d(C_2,q)=q^d+2q^{d-1}+\cdots + 2q+1,
\end{equation}
and for the graph $C_1$ consisting of one vertex with a self-loop
$$
\label{C1-fmla}
A_d(C_1,q)=q^d.
$$
\end{proposition}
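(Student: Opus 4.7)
My plan is to apply formula~\eqref{A-fmla} directly to $Q = C_n$ by enumerating the connected spanning subgraphs $\Gamma \subseteq C_n$. Since $C_n$ has $n$ vertices and $n$ edges with a single independent cycle, the connected spanning subgraphs are precisely: the cycle $C_n$ itself (with $b_1(C_n) = 1$), and the $n$ spanning paths obtained by deleting a single edge (each with $b_1 = 0$); deleting two or more edges disconnects the graph.

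For each of the $n$ paths $P$, every contraction $P_k$ remains a tree, so $b_1(P_k) = 0$ and $\delta(P, r) = 0$ for every function $r$; each path thus contributes $d^{n-1}$ to~\eqref{A-fmla}, for a total of $n d^{n-1}$. For $\Gamma = C_n$, I would use the elementary fact that contracting any proper subset of the edges of a cycle yields a smaller cycle, while contracting all of them yields a single isolated vertex; this gives $b_1((C_n)_k) = 1$ if $E_k \neq \emptyset$ and $b_1((C_n)_k) = 0$ if $E_k = \emptyset$. Consequently
$$\delta(C_n, r) = \#\bigl\{k \in \{1, \ldots, d-1\} : r(e) > k \text{ for all } e\bigr\} = m_r - 1, \qquad m_r := \min_{e \in E(C_n)} r(e).$$
This identification of $\delta$ with $m_r - 1$ is the main conceptual step; everything afterwards is combinatorial bookkeeping.

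Grouping the $d^n$ functions $r$ by the value of $m_r$, the number with $m_r = k$ equals $(d-k+1)^n - (d-k)^n$ by inclusion–exclusion, so the $C_n$-contribution to~\eqref{A-fmla} is
$$(q-1)\sum_{k=1}^{d}\bigl[(d-k+1)^n - (d-k)^n\bigr]\,q^{k-1}.$$
An Abel-type rearrangement (distribute $q$, reindex, and collect second forward differences) converts this into
$$q^d + \sum_{k=1}^{d-1}\bigl[(d-k+1)^n - 2(d-k)^n + (d-k-1)^n\bigr] q^k - \bigl[d^n - (d-1)^n\bigr];$$
adding the paths' contribution $n d^{n-1}$ yields~\eqref{cyclic-fmla}. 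Non-negativity of the coefficients then falls out of convexity of $x \mapsto x^n$ on $\Z_{\geq 0}$: the middle coefficients are second forward differences of $x^n$ at positive integers, and the constant term $n d^{n-1} - [d^n - (d-1)^n]$ is non-negative by the mean value inequality. The specializations to $C_1$ (every interior coefficient and the constant term vanish, leaving $q^d$) and to $C_2$ (the second differences of $x^2$ are identically $2$ and the constant collapses to $1$, reproducing~\eqref{C2-fmla}) then follow by direct substitution.
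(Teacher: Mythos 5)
Your proof is correct and follows essentially the same approach as the paper: enumerate the spanning connected subgraphs of $C_n$ (the cycle and the $n$ paths), observe that for $\Gamma=C_n$ the exponent $\delta(C_n,r)$ equals $\min_e r(e)-1$, group the $r$'s by their minimum, and then rearrange into second forward differences of $x^n$. The paper compresses the final algebraic rearrangement into the phrase ``a final manipulation of these terms yields,'' and uses Taylor's theorem with remainder rather than the mean value form for the constant term, but these are cosmetic differences from your write-up.
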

\begin{proof}
  We use~\eqref{A-fmla}. Consider first the case $\Gamma=C_n$. If $i$
  is the minimum value of $r$ then $b_1(\Gamma_k)=1$ for $k\geq i$ and
  $b_1(\Gamma_k)=0$ for $k <i$. Hence the pair $(\Gamma,r)$
  contributes $q^{i-1}$ to the sum. The total number of $r'$s with
  minimum value $i$ is $(d-i+1)^n-(d-i)^n$. Similarly, the total
  contributions of pairs $(\Gamma,r)$ for $\Gamma$ a segment obtained
  from $C_n$ by removing an edge is easily seen to be $nd^{n-1}$. A
  final manipulation of these terms yields~\eqref{cyclic-fmla}.  

To show that the  coefficients are non-negative note that the function
$f(x):= x^n$ is concave up for $n\geq 1$ and $x\geq 0$. Hence,
$(x+1)^n-2x^n+(x-1)^n\geq 0$ for $x\geq 0$. For the constant term,
by Taylor's theorem $f(d-1)=f(d)-f'(d)+\tfrac12f''(\theta)$, where $d-1\leq
\theta \leq d$. Since $f''(x)=n(n-1)x^{n-2}\geq 0$ for $x\geq0$ this
finishes the proof.
\end{proof}

\begin{remark}
i) Calculations such as these suggest the natural conjecture that in
general $A_d(Q,q)$ has non-negative integer coefficients. This is the
analogue for $\k_d$ of the first part of Kac's conjecture (now a
theorem~\cite{HLV}) on counting absolutely indecomposable
representations over~$\F_q$ (the case $d=1$).

ii) The polynomial $A_1(Q,q)$ is the specialization of the Tutte
polynomial $T(Q;x,y)$ of the underlying graph at $x=1$. Indeed, for
$d=1$ necessarily $r$ is identically equal to $1$ and hence
$\delta(\Gamma,r)=0$. Then~\eqref{A-fmla} yields
$$
A_1(Q,q)=\sum_\Gamma(q-1)^{b_1(\Gamma)}=T(Q;1,q),
$$
where $\Gamma$ runs over connected subgraphs of $Q$.  It is known that
the coefficients of the Tutte polynomial are non-negative integers and
therefore the same is true for $A_1(Q,q)$. For generalities on the
Tutte polynomial see for example~\cite{tutte-ref}.
\end{remark}

\subsection{Generating functions}
\label{toric-gen-fctns}
We now consider the generating function of both the $A_d$'s and the
$R_d$'s. It will be convenient to extend their definition to include a
term $A_0$ and $R_0$. Since we will need these independently later we
first define the following two functions on connected graphs.
\begin{definition}
For $\Gamma$ a connected graph  define
\begin{equation}
\label{epsilon-defn}
\epsilon(\Gamma):=
\begin{cases}
1 & \Gamma=\bullet\\
0 &\text{otherwise}
\end{cases}
\end{equation}
and
\begin{equation}
\label{epsilon1-defn}
\epsilon_1(\Gamma):=
\begin{cases}
1 & \Gamma=S_m, \text{ for some $m\geq 0$}\\
0 &\text{otherwise.}
\end{cases}
\end{equation}
where $\Gamma=S_m$ is the graph with one vertex and
$m$ loops. 

We then set 
$$
R_0(\Gamma,q):=\epsilon(\Gamma), \qquad \qquad 
A_0(\Gamma,q):=\epsilon_1(\Gamma).
$$
Note that with this definition the identity~\eqref{A-fmla-1} remains
valid for $d=0$.  We include also the case $m=0$ where $S_0=\bullet$, the
graph consisting of a single vertex with no edges.

\end{definition}

Let 
\begin{equation}
\label{Agf-defn}
A(Q,q,T):=\sum_{d\geq 0}A_d(Q,q)T^d, \qquad \qquad R(\Gamma,q,T):=\sum_{d\geq
  0}R_d(\Gamma,q)T^d. 
\end{equation}
We record for further reference that as it is easily seen
\begin{equation}
\label{R-1}
R(\bullet,q,T)=\frac1{1-T}.
\end{equation}

To study these series we introduce some notation. Fix a connected
subgraph of $\Gamma\subseteq Q$.  We would like to parametrize all
possible functions $r$ keeping track of the exponent of $q$
in~\eqref{A-fmla}. To this end, given the function~$r$ on edges,
let $1\leq r_1<r_2<\cdots < r_l\leq d$ be the distinct values it takes
on $E(\Gamma)$. We associate with $r$ a strict filtration
$\calF=\calF(r)$ of the edges of $\Gamma$ of length $l$
\begin{equation}
\label{F-filtr}
\calF: \quad \emptyset=F_0\subsetneq F_1\subsetneq \cdots \subsetneq F_l=E(
\Gamma).
\end{equation}
in such a way that $r$ is constant equal to $r_i$ on
$F_i\setminus F_{i-1}$ for $i=1,\ldots,l$.  Let $\Lambda_i$ be the
graph obtained by contracting all edges of
$E(\Gamma)\setminus F_{i-1}$ in $\Gamma$ for $i=1,\ldots,l$ and let
$c_i:=b_1(\Gamma)-b_1(\Lambda_i)$. For convenience we set $c_0:=0$. Note that
$c=c(\calF):=(c_0,c_1,\ldots,c_l)$ depends only on $\calF$ and not on $r$
itself.

Given a $l+1$-tuple of non-negative integers $c=(c_0,c_1,\ldots,c_l)$ define
the rational function
$$
R(c,q,T):=\frac{q^{\sum_{i=2}^lc_i} T^l}{(1-q^{c_0}T)(1-q^{c_1}T)\cdots
  (1-q^{c_l}T)}.
$$
If $l=1$ so $c=(0,c_1)$ this yields
$$
R(c,q,T)=\frac{T}{(1-T)(1-T^{c_1})}.
$$
We also extend this definition to the degenerate case where
$E(\Gamma)$ is empty. By our running assumption that $\Gamma$ is a
spanning subgraph of $Q$ it follows that in this case
$Q=\bullet$. Then $l=0,c=(0)$ and
$$
R(c,q,T)=\frac{1}{1-T}.
$$

We have the following.
\begin{proposition}
Fix the above notation and hypothesis. The generating
functions~\eqref{Agf-defn} can be expressed
as
\begin{equation}
\label{ratnl-fmla}
A(Q,q,T)=\sum_\Gamma (q-1)^{b_1(\Gamma)}R(\Gamma,q,T), \qquad
R(\Gamma,q,T)=\sum_\calF R(c(\calF),q,T),
\end{equation}
where $\Gamma$ runs over all connected subgraphs of $Q$ and $\calF$
over all strict filtrations of $E(\Gamma)$. In particular, $A(Q,q,T)$
and $R(\Gamma,q,T)$ are rational functions of $q$ and $T$.
\end{proposition}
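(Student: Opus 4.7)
The first identity is an immediate consequence of the polynomial identity \eqref{A-fmla-1}: multiplying by $T^d$ and summing over $d\geq 0$ (using the extended conventions $A_0(Q,q)=\epsilon_1(Q)$ and $R_0(\Gamma,q)=\epsilon(\Gamma)$, under which \eqref{A-fmla-1} remains valid at $d=0$) yields $A(Q,q,T)=\sum_\Gamma(q-1)^{b_1(\Gamma)}R(\Gamma,q,T)$.

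The main content is the second identity. My plan is to partition the set of functions $r:E(\Gamma)\to\{1,\dots,d\}$ according to their associated strict filtration $\calF(r)$ from \eqref{F-filtr}. For a fixed strict filtration $\calF$ of length $l$, the functions $r$ with $\calF(r)=\calF$ are in bijection with strictly increasing tuples $1\leq r_1<\cdots<r_l\leq d$; I will parametrize these by gap variables $\sigma_i:=r_i-r_{i-1}$ (with the convention $r_0:=0$) for $i=1,\dots,l$ together with $\sigma_{l+1}:=d-r_l$, so $\sigma_1,\dots,\sigma_l\geq 1$, $\sigma_{l+1}\geq 0$ and $\sum_{i=1}^{l+1}\sigma_i=d$. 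This decouples the data of $\calF$ from the data of the gap sequence.

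The computational heart is then to express $\delta(\Gamma,r)$ from \eqref{m-defn} in these variables. The key observation is that for $r_i\leq k<r_{i+1}$ (with $r_{l+1}:=d$), the edge set $E_k$ equals $F_i$, and therefore $\Gamma_k=\Lambda_{i+1}$ (where $\Lambda_{l+1}:=\Gamma$). Using $b_1(\Lambda_1)=0$ so that $c_1=b_1(\Gamma)$, and that the terminal interval $r_l\leq k\leq d-1$ contributes $0$, one obtains
\[
\delta(\Gamma,r)=(\sigma_1-1)c_1+\sum_{i=2}^l\sigma_ic_i=\sum_{i=1}^l\sigma_ic_i-c_1.
\]
The independent geometric series in each $\sigma_i$ then sum to
\[
\sum_{\substack{r:\,\calF(r)=\calF\\ d\geq r_l}}q^{\delta(\Gamma,r)}T^d=q^{-c_1}\prod_{i=1}^l\frac{q^{c_i}T}{1-q^{c_i}T}\cdot\frac{1}{1-T}=\frac{q^{\sum_{i=2}^l c_i}T^l}{\prod_{i=0}^l(1-q^{c_i}T)}=R(c(\calF),q,T),
\]
where the factor $1/(1-T)$ is absorbed as $1/(1-q^{c_0}T)$ via $c_0=0$. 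Summing over all strict filtrations $\calF$ of $E(\Gamma)$, with the degenerate case $l=0$ (which forces $\Gamma=\bullet$ under the spanning assumption) matching \eqref{R-1}, gives $R(\Gamma,q,T)=\sum_\calF R(c(\calF),q,T)$. Rationality of $A(Q,q,T)$ and $R(\Gamma,q,T)$ in $q$ and $T$ is then immediate, since each is a finite sum of rational functions.

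The only real obstacle is careful bookkeeping: aligning the conventions $r_0=0$, $r_{l+1}=d$, $c_0=0$, $\Lambda_{l+1}=\Gamma$ so that the boundary intervals $[1,r_1)$ and $[r_l,d)$ contribute the correct amounts ($(\sigma_1-1)c_1$ and $0$ respectively), and separately checking the degenerate case $E(\Gamma)=\emptyset$ where the empty function $r$ is the unique contributor and the formula collapses to \eqref{R-1}.
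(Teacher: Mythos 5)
Your proof is correct and follows essentially the same route as the paper's: partition the functions $r$ with $\calF(r)=\calF$ by gap variables, check that $\delta(\Gamma,r)$ becomes a linear form in those gaps with coefficients $c_i$, and sum the resulting independent geometric series to obtain $R(c(\calF),q,T)$; the paper's convention $r_0:=1$ with the constrained sum $s_1+\cdots+s_l\le d-1$ and your convention $r_0:=0$ with the slack variable $\sigma_{l+1}=d-r_l$ are equivalent parametrizations giving the same series. The first identity is likewise obtained exactly as in the paper, by summing \eqref{A-fmla-1} (valid also at $d=0$ with the extended conventions) against $T^d$.
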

\begin{proof}
It will be convenient to set $r_0:=1$ and define $s_i:=r_i-r_{i-1}$
for $i=1,\ldots, l$. Fix a filtration $\calF$ as in
~\eqref{F-filtr}. Then all $r$'s such that $\calF(r)=\calF$ can be
parametrized by integers $s_1,\ldots,s_l$ with $s_1\geq
0,s_2>0,\ldots, s_l>0$ and $s_1+\cdots+s_l\leq d-1$. The exponent of
$q$ in~\eqref{A-fmla} is then $\sum_{i=1}^l c_is_i.$
Summing over all such $s$ and $d$ we get
$$
\sum_{d\geq 1} \sum_s
 x_1^{s_1}\cdots x_l^{s_l}\ T^d = \frac{x_2\cdots x_l
  T^l}{(1-T)(1-x_1T)\cdots (1-x_lT)}.
$$
Now plugging in $x_i=q^{c_i}$ combined with~\eqref{A-fmla-1}
yield~\eqref{ratnl-fmla}.
\end{proof}

For example, for $Q=C_3$ spelling out~\eqref{ratnl-fmla} we find 
\begin{align}
\begin{split}
A(C_3,q,T)
=&(q-1)\left(\frac{6T^3}{(1-T)^3(1-qT)}
+\frac{6T^2}{(1-T)^2(1-qT)}
+\frac{T}{(1-T)(1-qT)}\right)
+\frac{6T^2}{(1-T)^3}
+\frac{3T}{(1-T)^2}\\
=&(q-1)\frac{T(T^2+4T+1)}{(1-T)^3(1-qT)}+\frac{3T(T+1)}{(1-T)^3}\\
=&\frac{(2q + 1)T^2 + (q + 2)T}{(1-T)^2(1-qT)}.
\end{split}
\end{align}
The first three terms in the sum correspond to strict filtrations of
$E(C_3)$. There are six filtrations with $c=(0,1,0,0)$, six with
$c=(0,1,0)$ and one with $c=(0,1)$. The last two terms correspond to the
three subgraphs of $C_3$ with two edges. Each subgraph has two filtrations
with $c=(0,0,0)$ and one with $c=(0,0)$. 

In general for the cyclic graph the expression~\eqref{cyclic-fmla}
gives the following for $n>1$
\begin{align}
\begin{split}
A(C_n,q,T)=&(q-1)\frac{TA_n(T)}{(1-T)^n(1-qT)}
+n\frac{TA_{n-1}(T)}{(1-T)^n}\\
=&\frac{T\sum_{j=0}^{n-2}A(n-1,j)\left[q(j+1)+n-1-j\right]T^j}
{(1-T)^{n-1}(1-qT)},   
\end{split}
\end{align}
where $A_n(T):=\sum_{j=0}^{n-1}A(n,j)T^j$ with $A(n,j)$ 
  the Eulerian numbers defined, for example, by the generating
  function
\begin{equation}
\label{eulerian}
\sum_{k\geq 0}(k+1)^nT^k=\frac{\sum_{j=0}^{n-1}A(n,j)T^j}{(1-T)^{n+1}}.
\end{equation}
Note the duality
\begin{equation}
\label{C_n-duality}
A(C_n,q^{-1},T^{-1})=(-1)^nA(C_n,q,T), \qquad n>1,
\end{equation}
which follows easily from the symmetry $A(n,j)=A(n,n-1-j)$ of the
Eulerian numbers. We will see below~\S\ref{duality} that this is a
general feature of such generating functions.

 Here are the first few cases of  $N_n$,
 the numerator of $A(C_n,q,T)$ divided by $T$
$$
\begin{array}{c|l}
n& N_n\\
\hline
2 &q + 1\\
3 &  (2q + 1)T + (q + 2)\\
4 & (3q + 1)T^2 + (8q + 8)T + (q + 3)\\
5 & (4q + 1)T^3 + (33q + 22)T^2 + (22q + 33)T + (q + 4)\\
6 & (5q + 1)T^4 + (104q + 52)T^3 + (198q + 198)T^2 + (52q +
104)T + (q + 5)\\
\end{array}
$$

\subsection{Recursion}
\label{toric-recursion}

\begin{proposition}
\label{R-recursion-prop}
The following recursion holds
\begin{equation}
\label{R-recursion}
R(\Gamma,q,T)=\epsilon(\Gamma)+T\sum_{A\subseteq E(\Gamma)} 
R(\Gamma/A,q,q^{b_1(\Gamma[A])}T),  
\end{equation}
where $\Gamma[A]$ denotes the subgraph of $\Gamma$ determined by the
subset of edges $A$ and $\Gamma/A$ the graph obtained by contracting
all edges in $A$. 
\end{proposition}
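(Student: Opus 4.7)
The plan is to prove the recursion by constructing a natural bijection between functions $r: E(\Gamma) \to \{1, \ldots, d\}$ and pairs $(A, r')$ with $A \subseteq E(\Gamma)$ and $r': E(\Gamma/A) \to \{1, \ldots, d-1\}$, valid for each $d \geq 1$. Given $r$, I set $A := \{e : r(e) = d\}$ and $r' := r|_{E(\Gamma) \setminus A}$, using the natural identification $E(\Gamma/A) = E(\Gamma) \setminus A$; conversely, extend $r'$ by declaring $r(e) := d$ for $e \in A$. The problem reduces to proving that under this bijection
$$
\delta(\Gamma, r) \;=\; \delta(\Gamma/A, r') \;+\; (d-1)\, b_1(\Gamma[A]).
$$

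The essential input is the identity $b_1(G) = b_1(G[A]) + b_1(G/A)$ for any connected graph $G$ and edge subset $A \subseteq E(G)$, which follows immediately from $b_1(H) = |E(H)| - |V(H)| + c(H)$ by tracking components (and holds uniformly regardless of loops or bridges in $A$). Applied to $G = \Gamma$ with $A$ replaced by $\{e : r(e) > k\}$, this rewrites the summands in~\eqref{m-defn} as $b_1(\Gamma) - b_1(\Gamma_k) = b_1(\Gamma[\{e : r(e) > k\}])$, so
$$
\delta(\Gamma, r) \;=\; \sum_{k=1}^{d-1} b_1\bigl(\Gamma[\{e : r(e) > k\}]\bigr).
$$
With $A = r^{-1}(d)$ as above, the $k = d-1$ term equals $b_1(\Gamma[A])$; for $1 \leq k \leq d-2$ one has the disjoint union $\{e : r(e) > k\} = A \sqcup B_k$ with $B_k := \{e \notin A : r'(e) > k\}$, and applying the identity a second time inside $\Gamma[A \sqcup B_k]$, together with the evident compatibility $(\Gamma/A)[B_k] = \Gamma[A \sqcup B_k]/A$, gives $b_1(\Gamma[A \sqcup B_k]) = b_1((\Gamma/A)[B_k]) + b_1(\Gamma[A])$. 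Summing the $d-1$ terms produces exactly the promised identity.

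With this in hand, summing $q^{\delta(\Gamma,r)}$ over all $r$ and splitting according to $A$ yields
$$
R_d(\Gamma, q) \;=\; \sum_{A \subseteq E(\Gamma)} q^{(d-1) b_1(\Gamma[A])}\, R_{d-1}(\Gamma/A, q)
$$
for $d \geq 1$. Multiplying by $T^d$, summing $d \geq 1$, and packaging
$\sum_{d \geq 1} R_{d-1}(\Gamma/A, q)\, \bigl(q^{b_1(\Gamma[A])} T\bigr)^{d-1} = R(\Gamma/A, q, q^{b_1(\Gamma[A])} T)$ delivers the stated recursion, with the $d=0$ contribution $R_0(\Gamma, q) = \epsilon(\Gamma)$ accounting for the constant term. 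There is no serious obstacle here; the only care needed is organizational, namely keeping the identifications $E(\Gamma/A) \cong E(\Gamma) \setminus A$ and $(\Gamma/A)[B] = \Gamma[A \sqcup B]/A$ consistent throughout, and remembering that contracting a self-loop decreases $b_1$ by one (which is exactly what the master identity $b_1(G) = b_1(G[A]) + b_1(G/A)$ prescribes).
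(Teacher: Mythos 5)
Your argument is correct, but it proceeds by a genuinely different route than the paper's. The paper obtains~\eqref{R-recursion} from the closed-form expression $R(\Gamma,q,T)=\sum_\calF R(c(\calF),q,T)$ of~\eqref{ratnl-fmla}, grouping the strict filtrations $\calF$ of $E(\Gamma)$ by one layer and manipulating the resulting rational functions. You instead prove the coefficient-level recursion $R_d(\Gamma,q)=\sum_{A\subseteq E(\Gamma)}q^{(d-1)b_1(\Gamma[A])}R_{d-1}(\Gamma/A,q)$, $d\ge 1$, directly from the definitions~\eqref{R-defn},~\eqref{m-defn} by the bijection $r\leftrightarrow(A,r')$ with $A=r^{-1}(d)$ and the exponent identity $\delta(\Gamma,r)=\delta(\Gamma/A,r')+(d-1)b_1(\Gamma[A])$, the latter reduced to two applications of the additivity $b_1(G)=b_1(G[A])+b_1(G/A)$ (the paper's Lemma~\ref{b_1-char}) together with the compatibility $(\Gamma/A)[B]=\Gamma[A\sqcup B]/A$; summing in $T$ then gives~\eqref{R-recursion}, with $R_0=\epsilon$ supplying the constant term. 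Your coefficient identity is exactly the Hopf-convolution relation $R_d=\psi(q^{d-1})*R_{d-1}$ that the paper only records afterwards, in~\eqref{R_d-recursion}, as a consequence of the proposition; in effect you have proved that relation directly rather than deducing it. Both routes are sound: the paper's is shorter once~\eqref{ratnl-fmla} is in hand, while yours is self-contained from the combinatorial definition of $R_d$ and makes the Hopf-convolution structure visible from the outset. All your steps check, including the edge case $A=E(\Gamma)$ (with $r'$ the empty function and $\Gamma/A=\bullet$, contributing via $R_0=\epsilon$), and the identity $b_1(G)=b_1(G[A])+b_1(G/A)$ indeed holds uniformly for any $A\subseteq E(G)$, loops and bridges included, since contraction preserves the number of connected components.
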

\begin{proof}
 For $\Gamma\neq \bullet$ the recursion follows from the definition by
 grouping in the sum all strict   filtrations with the same first term
 $\Lambda_1=\Gamma/A$.  For $\Gamma=\bullet$ it is a simple
 verification, namely
$$
1+\frac T {1-T}=\frac 1{1-T}.
$$
\end{proof}

As an application of this recursion, consider $\Gamma=S_m$ the graph
with one vertex and $m$ loops.  Formula~\eqref{R-recursion} in this
case yields
$$
R(S_m,q,T)=\frac T{(1-T)(1-q^mT)}+\sum_{i=1}^{m-1}\binom
mi\frac T{1-T}R(S_{m-i},q,q^iT), \qquad m\geq 1.
$$
For $m\geq 1$ the $R(S_m,q,T)$'s are rational functions of the form
$TF_m(q,T)/(T)_{m+1}$ for some polynomials $F_m$, where
$(T)_m:=\prod_{i=0}^{m-1}(1-q^iT)$.  Here are the first few values.
$$
\begin{array}{c|l}
m& F_m\\
\hline
1& 1\\
2&qT + 1\\
3&q^3T^2 + (2q^2 + 2q)T + 1\\
4& q^6T^3 + (3q^5 + 5q^4 + 3q^3)T^2 + (3q^3 + 5q^2 + 3q)T + 1.
\end{array}
$$
On the other hand, by~\eqref{R-interpr} we have
$$
R(S_m,q,T)=(q-1)^{-m}\sum_{d\geq 0}(q^d-1)^mT^d, \qquad m\geq 1.
$$
This shows that $F_m$ is a $q$-version of the Eulerian polynomial whose
coefficients are the Eulerian numbers $A(m,j)$
of~\eqref{eulerian}. These $q$-Eulerian polynomials were defined by
Carlitz~\cite{carlitz}; they have interesting combinatorial
interpretations. 

In fact, it is trivial to verify that $A_d(S_m,q)=q^{dm}$ for
$d\geq 1$ as the group $G_d$ acts trivially on representations in this
case. It is also true for $d=0$. Hence
$$
A(S_m,q,T)=\sum_{d\geq 0}q^{dm}T^d=\frac1{1-q^mT}
$$
and~\eqref{A-fmla-1} becomes
$$
\sum_{i=0}^m\binom m
i(q-1)^i\frac{F_i(q,T)}{(T)_{i+1}}=\frac{q^m}{1-q^mT}.
$$

\subsection{Duality}
\label{duality}
We now prove a remarkable duality property of the rational functions
$R(\Gamma,q,T)$ and $A(\Gamma,q,T)$.  They both are transformed in a
simple way under the inversion $(q,T)\mapsto (q^{-1},T^{-1})$.  Here is a
table of the values of $R(\Gamma,q,T)$ and $A(\Gamma,q,T)$ for a few
small graphs.

\begin{table}[H]
\setlength{\tabcolsep}{5mm} 
\def\arraystretch{1.25} 
\centering
\begin{tabular}{|c|c|c|}
\hline
$\Gamma$ & $R$ & $A$\\
\hline
             \begin{tikzpicture}[anchor=base, baseline] 
\draw[fill=black] (0,0) circle (.05);
\addvmargin{1mm}
             \end{tikzpicture}
& $\frac 1{[0]}$ & $\frac1{[0]}$\\
\hline
             \begin{tikzpicture}[anchor=base, baseline] 
\draw[fill=black] (0,0) circle (.05);
\draw[thick] (0,0) arc [start angle=0, end angle=360, radius=0.3 cm];
\addvmargin{1mm}
             \end{tikzpicture}
& $\frac T{[01]}$ & $\frac 1{[1]}$\\
\hline
             \begin{tikzpicture}[anchor=base, baseline] 
\draw[fill=black] (0,0) circle (.05);
\draw[fill=black] (.5,0) circle (.05);
\draw[thick] (0,0) -- (.5,0);
\addvmargin{1mm}
             \end{tikzpicture}
& $\frac T{[0^2]}$ & $\frac T{[0^2]}$\\

\hline
             \begin{tikzpicture}[anchor=base, baseline] 
\draw[fill=black] (0,0) circle (.05);
\draw[fill=black] (.5,0) circle (.05);
\draw[thick] (0,0) to [bend right=45] (.5,0);
\draw[thick] (0,0) to [bend right=-45] (.5,0);
\addvmargin{1mm}
             \end{tikzpicture}
& $\frac{(T+1)T}{[0^21]}$ & $\frac {(q+1)T}{[01]}$\\

\hline
             \begin{tikzpicture}[anchor=base, baseline] 
\draw[fill=black] (0,0) circle (.05);
\draw[fill=black] (.5,0) circle (.05);
\draw[thick] (0,0) to [bend right=60] (.5,0);
\draw[thick] (0,0) -- (.5,0);
\draw[thick] (0,0) to [bend right=-60] (.5,0);
\addvmargin{1mm}
             \end{tikzpicture}
& $\frac{(qT^2+2qT+2T+1)T}{[0^212]}$ & $\frac {(q^2+q+1)T}{[02]}$\\

\hline
             \begin{tikzpicture}[anchor=base, baseline] 
\draw[fill=black] (0,0) circle (.05);
\draw[fill=black] (.5,.6) circle (.05);
\draw[fill=black] (1,0) circle (.05);

\draw[thick] (0,0) -- (1,0);
\draw[thick] (0.5,.6) -- (1,0);
\draw[thick] (0.5,.6) -- (0,0);

\addvmargin{1mm}
             \end{tikzpicture}
& $\frac{(T^2+4T+1)T}{[0^31]}$ & $\frac {(2qT+T+q+2)T}{[0^21]}$\\
\hline
\end{tabular}
\caption{Rational Functions\tablefootnote{$[0^{n_0}1^{n_1}\cdots]$
    denotes the polynomial $(1-T)^{n_0}(1-qT)^{n_1}\cdots$} }
\label{graph-table}
\end{table}

More precisely, we have the following
\begin{theorem}
\label{duality-thm}
  Let $\Gamma$ be a finite connected graph. Then we have
\begin{equation}
\label{A-duality}
A(\Gamma,q^{-1},T^{-1})=\epsilon_1(\Gamma)+(-1)^{\#V(\Gamma)}A(\Gamma,q,T),
\end{equation}
and
\begin{equation}
\label{R-duality}
R(\Gamma,q^{-1},T^{-1})=\epsilon(\Gamma)+(-1)^{\#E(\Gamma)-1}q^{b_1(\Gamma)}R(\Gamma,q,T),  
\end{equation}
where $V(\Gamma)$ and $E(\Gamma)$ denotes the set of vertices and
edges of $\Gamma$ respectively. 
\end{theorem}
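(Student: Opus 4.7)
My plan is to reduce the second identity \eqref{A-duality} to the first \eqref{R-duality}, and then to prove \eqref{R-duality} by term-by-term analysis of the filtration formula \eqref{ratnl-fmla}.

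First, I would derive \eqref{A-duality} from \eqref{R-duality} by substituting the $R$-duality into each summand of $A(\Gamma,q^{-1},T^{-1})=\sum_{\Gamma'\subseteq\Gamma}(q^{-1}-1)^{b_1(\Gamma')}R(\Gamma',q^{-1},T^{-1})$ from \eqref{A-fmla-1}. The coefficient manipulation $(q^{-1}-1)^{b_1(\Gamma')}q^{b_1(\Gamma')}=(-1)^{b_1(\Gamma')}(q-1)^{b_1(\Gamma')}$ combined with the parity identity $(-1)^{|E(\Gamma')|-1+b_1(\Gamma')}=(-1)^{|V(\Gamma)|}$, which follows from $b_1(\Gamma')=|E(\Gamma')|-|V(\Gamma)|+1$ for connected spanning $\Gamma'$, collapses all signs to a uniform $(-1)^{|V(\Gamma)|}$. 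The correction $\epsilon_1(\Gamma)$ arises precisely from the $\Gamma'=\bullet$ summand, which is a valid connected spanning subgraph of $\Gamma$ exactly when $|V(\Gamma)|=1$, i.e.\ when $\Gamma=S_m$ for some $m\geq 0$.

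For \eqref{R-duality}, I would compute the inverted transform of each elementary factor in \eqref{ratnl-fmla}. A direct substitution, using that $c_1(\calF)=b_1(\Gamma)$ for every strict filtration $\calF$, yields
$$R(c(\calF),q^{-1},T^{-1})=\frac{(-1)^{l(\calF)+1}\,q^{b_1(\Gamma)}\,T}{(1-T)\prod_{i=1}^{l(\calF)}(1-q^{c_i(\calF)}T)},$$
so that $R(\Gamma,q^{-1},T^{-1})=\tfrac{q^{b_1(\Gamma)}T}{1-T}\sum_\calF(-1)^{l(\calF)+1}\prod_i(1-q^{c_i(\calF)}T)^{-1}$. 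The base case $\Gamma=\bullet$ is then a direct verification: $1/(1-T^{-1})=-T/(1-T)=\epsilon(\bullet)+(-1)^{-1}q^0/(1-T)$, which matches \eqref{R-duality} with the empty filtration contributing the $\epsilon(\Gamma)$ term.

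For the inductive step (with $|E(\Gamma)|\geq 1$, so $\epsilon(\Gamma)=0$), I would use the recursion \eqref{R-recursion}. Separating the $A=\emptyset$ term gives $(1-T)R(\Gamma,q,T)=T\sum_{A\neq\emptyset}R(\Gamma/A,q,q^{b_1(\Gamma[A])}T)$, and inverting $(q,T)\mapsto(q^{-1},T^{-1})$ gives the analogous identity for $R(\Gamma,q^{-1},T^{-1})$. Since $|E(\Gamma/A)|<|E(\Gamma)|$ for $A\neq\emptyset$, I can apply the inductive hypothesis to each $R(\Gamma/A,q^{-1},(q^{b_1(\Gamma[A])}T)^{-1})$, using the key identities $b_1(\Gamma/A)=b_1(\Gamma)-b_1(\Gamma[A])$ and $|E(\Gamma/A)|=|E|-|A|$.

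The main obstacle is the sign bookkeeping. The inductive hypothesis produces signs $(-1)^{|E|-|A|-1}$, while the target expression carries a uniform $(-1)^{|E|-1}$; the discrepancy is an $(-1)^{|A|}$-weighted sum of $R(\Gamma/A,q,q^{b_1(\Gamma[A])}T)$ over $A\neq\emptyset$, together with an additional sum $\sum_{A\neq\emptyset}\epsilon(\Gamma/A)$. The second sum collapses to a single term from $A=E$, since $\Gamma/A=\bullet$ forces $A=E$ (for connected $\Gamma$), and this term precisely cancels the boundary contribution $T^{-1}\epsilon(\Gamma)$ required to turn the induction hypothesis into \eqref{R-duality}. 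Closing the induction rigorously requires verifying that the $(-1)^{|A|}q^{-b_1(\Gamma[A])}$-weighted sum reorganizes, via the original recursion applied inside each $R(\Gamma/A,q,\cdot)$, into $(-1)^{|E|-1}q^{b_1(\Gamma)}R(\Gamma,q,T)$; this is the technical heart of the argument.
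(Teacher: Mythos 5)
Your reduction of \eqref{A-duality} to \eqref{R-duality} via \eqref{A-fmla-1} is correct and spells out cleanly what the paper only asserts: the sign identity $(-1)^{|E(\Gamma')|-1+b_1(\Gamma')}=(-1)^{|V(\Gamma)|}$ for a connected spanning $\Gamma'$ is exactly right, and so is the observation that the correction $\epsilon_1(\Gamma)$ comes from the $\Gamma'=\bullet$ summand precisely when $\Gamma$ has one vertex. The gap is in your proof of \eqref{R-duality}. After correctly separating the $A=\emptyset$ term of \eqref{R-recursion}, inverting $(q,T)\mapsto(q^{-1},T^{-1})$, and applying the induction hypothesis to each $R\bigl(\Gamma/A,q^{-1},(q^{b_1(\Gamma[A])}T)^{-1}\bigr)$, the inductive step reduces to the identity
$$\sum_{A\subseteq E(\Gamma)}(-1)^{|A|}q^{-b_1(\Gamma[A])}\,R\bigl(\Gamma/A,q,q^{b_1(\Gamma[A])}T\bigr)\;=\;(-1)^{|E(\Gamma)|}q^{-b_1(\Gamma)}+T\,R(\Gamma,q,T),$$
which you leave unproved, saying only that it should ``reorganize via the original recursion inside each $R(\Gamma/A,q,\cdot)$.'' That reorganization is not a mechanical step: comparing coefficients of $T^d$, the display is exactly the assertion that $\bigl((-1)^\delta\psi(q^{d-1})\bigr)*R_d=R_{d-1}$ for every $d\ge 1$ (together with $\bigl((-1)^\delta\psi(q^{-1})\bigr)*R_0=(-1)^{\#E}q^{-b_1}$ at $d=0$), which requires both the recursion $R_d=\psi(q^{d-1})*R_{d-1}$ of \eqref{R_d-recursion} and Lemma \ref{psi-inverse} ($(-1)^\delta\psi(q)$ is the convolution inverse of $\psi(q)$). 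In other words, what you flag as ``the technical heart'' is the entire mathematical content of the theorem and must be proved; the paper does this by expanding $R(\Gamma,q,T)$ in powers of $T^{-1}$ and using Lemma \ref{psi-inverse} directly, while your induction on $|E(\Gamma)|$ is a different-looking but in-principle completable route that lands on the same convolution-inverse identity.

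Two further problems. The claimed cancellation against ``the boundary contribution $T^{-1}\epsilon(\Gamma)$'' is incoherent, since $\epsilon(\Gamma)=0$ whenever $|E(\Gamma)|\geq 1$; the $-1$ produced by $\sum_{A\neq\emptyset}\epsilon(\Gamma/A)=\epsilon(\Gamma/E)=1$ is actually matched by the $(-1)^{|E(\Gamma)|}q^{-b_1(\Gamma)}$ term on the right of the display (the $d=0$ case of the needed identity). And the opening paragraph's direct filtration computation, giving $R(\Gamma,q^{-1},T^{-1})=\tfrac{q^{b_1(\Gamma)}T}{1-T}\sum_\calF(-1)^{l(\calF)+1}\prod_{i=1}^{l(\calF)}(1-q^{c_i(\calF)}T)^{-1}$, is correct but is a dead end as written: that sum does not compare term-by-term with the analogous filtration expansion of $R(\Gamma,q,T)$, and it plays no role in the induction beyond the base case, which is already immediate from \eqref{R-1}.
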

Before we start with the proof properly it will be convenient to setup
the language of Hopf algebras we will use (see for
example~\cite{manchon} for an introduction). Let $\calH$ be the
commutative Hopf algebra over $\Q$ of linear combination of finite
graphs up to isomorphism and one-point join, self-loops and multiple
edges allowed (see~\cite[Example 23]{hopf-alg}). Concretely, $\calH$
is generated over $\Q$ by isomorphism classes of finite graphs modulo
the equivalence
$\Gamma_1 \coprod \Gamma_2\simeq \Gamma_1\times_v\Gamma_2$.  Here
$\Gamma_1 \coprod \Gamma_2$ denotes disjoint union and
$\Gamma_1\times_v\Gamma_2$ the one-point join of the two graphs at
vertices $v_i\in \Gamma_i$ for $i=1,2$; namely, the graph obtained by
identifying the two vertices $v_1$ and $v_2$ in
$\Gamma_1 \coprod \Gamma_2$. We will denote by
$\Gamma_1\simeq\Gamma_2$ the equivalence of two graphs $\Gamma_1$ and
$\Gamma_2$ if needed but we will mostly abuse notation and write the
graph $\Gamma$ for its equivalence class.

  The product in $\calH$
  is given by disjoint union and the coproduct by
$$
\Delta(\Gamma):=\sum_{A\subseteq E(\Gamma)} \Gamma[A] \otimes
\Gamma/A,
$$
where $\Gamma[A]$ and $\Gamma/A$ are as above.  The unit $1\in \calH$
is the class of the graph~$\bullet$ and the counit is the
map~$\epsilon$ induced from that previously defined
in~\eqref{epsilon-defn}. Namely,
$$
\epsilon(\Gamma):=
\begin{cases}
1&\text{if $\Gamma \simeq \bullet$}\\
0&\text{otherw.}
\end{cases}
$$

We can give $\calH$ a grading by setting
$\delta(\Gamma):=\#E(\Gamma)$. Then the degree zero piece is
$\Q\cdot 1$ and hence $\calH$ is connected.  Finally, the convolution
of two functions $f,g$ on graphs up to isomorphism and one-point join
with values in a commutative $\Q$-algebra $K$ is defined as
$$
(f*g)(\Gamma):=\sum_{A\subseteq E(\Gamma)} f(\Gamma[A])\cdot g(\Gamma/A).
$$
In what follows we will take $K:=\Q[q,q^{-1}]$.  Convolution gives an
associative product on $K$-valued functions on equivalence classes of
graphs. If $f$ and $g$ are multiplicative on disjoint unions of graphs
(giving rise to {\it characters} of the algebra structure of~$\calH$
by extending them by linearity) then the same holds for their
convolution~$f*g$. In fact, the characters of $\calH$ form a group
under convolution with identity
element~$\epsilon$~\cite[Prop. II.4.1]{manchon}. 

For example, the map on classes of graphs induced by the
function~$\epsilon_1$ defined in~\eqref{epsilon1-defn} is a character
and it is easily checked that if we convolve it with itself $k\geq 1$
times we get
$$
(\epsilon_1 * \cdots * \epsilon_1)(\Gamma) = 
\begin{cases}
(m+1)^{k-1} & \Gamma\simeq S_m, \text{ for some $m\geq 0$}\\
0 &\text{otherwise.}
\end{cases}
$$
The $R_d$'s defined in~\eqref{R-defn} when viewed as functions on graphs
also induce characters of $\calH$ as
$$
R_d(\Gamma,q)=R_d(\Gamma_1,q)R_d(\Gamma_2,q)
$$
if $\Gamma=\Gamma_1\times_v\Gamma_2$.
Finally, 
$$
\psi(q)(\Gamma):=q^{b_1(\Gamma)}, \qquad \qquad
(-1)^\delta(\Gamma):=(-1)^{\#E(\Gamma)}, 
$$
induce characters of $\calH$ as well. In all the above examples we will
denote the corresponding functions on classes of graph with the same
symbol and simply drop $\Gamma$ from the notation.

\begin{lemma}
\label{psi-inverse}
We have 
$$
(-1)^\delta\psi(q)*\psi(q)=\epsilon.
$$
In other words, the convolution inverse of $\psi(q)$ equals
$(-1)^\delta\psi(q)$.
\end{lemma}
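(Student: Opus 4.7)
The plan is to unwind the definition of convolution and reduce the identity to a simple inclusion-exclusion on edges. Writing it out,
\[
((-1)^\delta\psi(q)*\psi(q))(\Gamma)=\sum_{A\subseteq E(\Gamma)}(-1)^{|A|}\,q^{b_1(\Gamma[A])+b_1(\Gamma/A)},
\]
so the whole proof will hinge on controlling the exponent of $q$ appearing in each summand.

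The main step I will establish is the Euler-characteristic additivity
\[
b_1(\Gamma[A])+b_1(\Gamma/A)=b_1(\Gamma)\qquad\text{for every }A\subseteq E(\Gamma).
\]
This is essentially the only technical content: apply $b_1(H)=\#E(H)-\#V(H)+c(H)$ to both summands and use the three standard observations that (i) the vertices of $\Gamma/A$ are the connected components of $\Gamma[A]$, so $\#V(\Gamma/A)=c(\Gamma[A])$, (ii) $\#E(\Gamma/A)=\#E(\Gamma)-|A|$, and (iii) contraction does not change the number of connected components, so $c(\Gamma/A)=c(\Gamma)$. Adding the two formulas collapses the $c(\Gamma[A])$ terms and leaves $b_1(\Gamma)$. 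I do not anticipate serious obstacles here; the one thing I would state explicitly is that both $\psi(q)$ and $(-1)^\delta$ descend to well-defined characters of $\calH$ because $b_1$ and $\#E$ are additive under disjoint union and unchanged by one-point join.

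With the additivity in hand, the exponent of $q$ is independent of $A$ and factors out, yielding
\[
((-1)^\delta\psi(q)*\psi(q))(\Gamma)=q^{b_1(\Gamma)}\sum_{A\subseteq E(\Gamma)}(-1)^{|A|}.
\]
The inner sum equals $0$ when $E(\Gamma)\neq\emptyset$ and $1$ otherwise. In $\calH$ the relation $\Gamma_1\coprod\Gamma_2\simeq\Gamma_1\times_v\Gamma_2$ collapses every edgeless graph to the class of $\bullet$, so the condition $E(\Gamma)=\emptyset$ is precisely the support of $\epsilon$, and on that class $b_1(\Gamma)=0$. Hence the right-hand side equals $\epsilon(\Gamma)$, giving the identity $(-1)^\delta\psi(q)*\psi(q)=\epsilon$. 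Since $\calH$ is a commutative Hopf algebra the group of characters is abelian under convolution, so this left inverse is automatically a two-sided inverse, confirming the stated interpretation.
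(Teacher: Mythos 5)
Your proof is correct and follows essentially the same route as the paper: expand the convolution, use the additivity $b_1(\Gamma[A])+b_1(\Gamma/A)=b_1(\Gamma)$ (which the paper records as a separate lemma without proof and you verify via the Euler--characteristic formula), and then evaluate the alternating sum over edge subsets, with the edgeless case collapsing to $\bullet$. One small caveat: commutativity of $\calH$ does not make the character group abelian under convolution (that would require cocommutativity of the coproduct, which fails here); however, your conclusion is unaffected, since the characters form a group under convolution, so a one-sided inverse is automatically the inverse --- alternatively, the same exponent computation applied to $\psi(q)*\bigl((-1)^\delta\psi(q)\bigr)$ gives $\epsilon$ directly.
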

\begin{proof}
By Lemma~\ref{b_1-char} below we have  
$$
\left((-1)^\delta\psi *\psi\right)(\Gamma)=
\sum_{A\subseteq E(\Gamma)} (-1)^{\#A}q^{b_1(\Gamma[A])}
q^{b_1(\Gamma/A)}=q^{b_1(\Gamma)}\sum_{A\subseteq E(\Gamma)}
(-1)^{\#A}=\epsilon(\Gamma)
$$
\end{proof}
We will need the following simple
\begin{lemma}
\label{b_1-char}
With the above notation
$$
b_1(\Gamma)=b_1(\Gamma[A])+b_1(\Gamma/A),\qquad
\delta(\Gamma)=\delta(\Gamma[A])+\delta(\Gamma/A)
$$
 for any subset $A\subseteq E(\Gamma)$) 
\end{lemma}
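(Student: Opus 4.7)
The plan is to verify both identities directly from the standard Euler-characteristic formula for the first Betti number of a graph, namely
$$
b_1(\Lambda)=\#E(\Lambda)-\#V(\Lambda)+c(\Lambda),
$$
where $c(\Lambda)$ denotes the number of connected components of a graph $\Lambda$.

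The identity for $\delta$ is immediate: by the definition of $\Gamma[A]$ (same vertex set, edge set $A$) and $\Gamma/A$ (edges $E(\Gamma)\setminus A$), the edge set of $\Gamma$ is partitioned into those of $\Gamma[A]$ and those of $\Gamma/A$, so $\delta(\Gamma)=\#A+\#(E(\Gamma)\setminus A)=\delta(\Gamma[A])+\delta(\Gamma/A)$.

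For the Betti number identity, the key combinatorial observations are that (i) $\Gamma[A]$ has vertex set $V(\Gamma)$ and edge set $A$, so $b_1(\Gamma[A])=\#A-\#V(\Gamma)+c(\Gamma[A])$; (ii) the vertices of $\Gamma/A$ are in bijection with the connected components of $\Gamma[A]$, hence $\#V(\Gamma/A)=c(\Gamma[A])$ and $\#E(\Gamma/A)=\#E(\Gamma)-\#A$; and (iii) contracting edges does not change the number of connected components, so $c(\Gamma/A)=c(\Gamma)$. Plugging these into the Euler formula gives $b_1(\Gamma/A)=\#E(\Gamma)-\#A-c(\Gamma[A])+c(\Gamma)$.

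Adding the two expressions, the terms $\pm\#A$ and $\pm c(\Gamma[A])$ cancel and one is left with $\#E(\Gamma)-\#V(\Gamma)+c(\Gamma)=b_1(\Gamma)$, as required. There is no real obstacle here; the only point that deserves a line of justification is that edge contraction preserves connected components, which is clear since the two endpoints of any contracted edge lie in the same component both before and after contraction, and no new adjacencies are destroyed.
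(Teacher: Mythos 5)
Your proof is correct and complete, and it is the natural argument (the paper states Lemma \ref{b_1-char} without proof, presumably because it is exactly this short Euler-characteristic computation). You correctly use the spanning interpretation of $\Gamma[A]$ (vertex set $V(\Gamma)$), which is what makes the identification $\#V(\Gamma/A)=c(\Gamma[A])$ clean, and you correctly observe that contraction preserves the number of connected components; together these give the telescoping cancellation.
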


\begin{proof}{\it (of Theorem~\ref{duality-thm})}
  It suffices to prove~\eqref{R-duality} as it is easily seen to
  imply~\eqref{A-duality}. With our Hopf algebra setup the
  recursion~\eqref{R-recursion} is equivalent to the following
\begin{equation}
\label{R_d-recursion}
R_{d+1}=\psi(q^d)*R_d, \qquad R_0=\epsilon, \qquad \qquad d\geq 0,
\end{equation}
and hence by induction
\begin{equation}
\label{R_d-recursion-1}
R_d(q) = \psi(q^{d-1})*\cdots *\psi(1), \qquad d\geq 1.
\end{equation}

Let $\tilde R_d(\Gamma,q)$ be the coefficient of $T^{-d}$ in the
expansion of $R(\Gamma,q,T)$ in powers of $T^{-1}$. Namely,
$$
R(\Gamma,q,T)=\sum_d \tilde R_d(\Gamma,q) T^{-d},
$$
where the sum is over integers $d\geq d_0$ for $d_0\in\Z$ (recall that
$R(\Gamma,q,T)$ is a rational function).  In fact we may take $d_0=1$
since $\tilde R_d$ is zero for $d\leq0$ by the following.
\begin{lemma}
\label{R-vanish}
The rational function $R(\Gamma,q,T)$ vanishes to order at least
one at  $T=\infty$.
\end{lemma}
\begin{proof}
  This follows immediately from the recursion~\eqref{R-recursion} by
  induction on the number of edges of~$\Gamma$.  Indeed, for~$\bullet$
  we have $R=(1-T)^{-1}$ (see~\ref{R-1}).  Now
  rewriting~\eqref{R-recursion} as
$$
R(\Gamma,q,T)=\frac{\epsilon(\Gamma)}{1-T}+\frac T{1-T}\sum_{\emptyset
  \subsetneq A\subseteq E(\Gamma)} R(\Gamma/A,q,q^{b_1(\Gamma[A])}T),
$$
it suffices to note that in the sum on the right hand side each
$\Gamma/A$ has fewer edges than $\Gamma$ and hence the claim follows.
\end{proof}

Expanding in powers of $T^{-1}$ the recursion~\eqref{R-recursion} we
obtain
\begin{equation}
\label{R-recursion-step}
\sum_{d\geq 1}\tilde R_d(\Gamma,q)T^{-d}=\epsilon(\Gamma)+
T\sum_{d\geq 1}
\sum_{A\subseteq E(\Gamma)} \tilde
R_d(\Gamma/A,q)\left(q^{b_1(\Gamma[A])}T\right)^{-d}.
\end{equation}
Equating constant coefficients on both sides
of~\eqref{R-recursion-step} we find that
$$
0=\epsilon +\psi(q^{-1})*\tilde R_1(q).
$$
Letting $\phi(q)$ be the convolution inverse of $\psi(q)$ and thanks
to Lemma~\ref{psi-inverse} we can rewrite this identity as
$$
\tilde R_1(q)=-\phi(q^{-1})
*\epsilon=-\phi(q^{-1})=(-1)^{\delta-1}\psi(q^{-1}) .
$$
In general, comparing the coefficients of $T^{-d+1}$  we find 
\begin{equation}
\label{R_d-tilde-recursion}
\tilde R_{d-1}(q)=\psi(q^{-d})*\tilde R_d(q), \qquad \qquad d> 1,
\end{equation}
which using $\phi$ again we may reformulate as
\begin{equation}
\label{R_d-tilde-recursion-1}
\tilde R_d(q)=(-1)^\delta\psi(q^{-d})*\tilde R_{d-1}(q), \qquad \qquad d> 1.
\end{equation}
Combined with Lemma~\ref{b_1-char} this gives
$$
\tilde R_d(q) =
(-1)^{\delta-1}\psi(q^{-1})\cdot\left(\psi(q^{-(d-1)})*\cdots
  *\psi(1)\right), \qquad d\geq 1.
$$
(To be clear, here $\cdot$ is just multiplication in $K$.)

Finally, comparison with~\eqref{R_d-recursion-1} gives
$$
\tilde R_d(q) =
(-1)^{\delta-1}\psi(q)\cdot R_d(q^{-1}), \qquad \qquad d> 0.
$$
proving~\eqref{R-duality}. 
\end{proof}

\begin{remark}
There is a relation between $R_2(\Gamma,q)$ and the Tutte polynomial
of $\Gamma$. More precisely, we have the following.
\begin{proposition}
For a connected graph $\Gamma$ 
\begin{equation}
\label{R2-fmla}
R_2(\Gamma,q)=T(\Gamma;2,q+1)
\end{equation}
\end{proposition}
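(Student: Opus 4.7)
The plan is to verify this identity by a direct computation from the definitions. The case $d=2$ is special because each function $r\colon E(\Gamma)\to\{1,2\}$ is the same data as a subset $B:=r^{-1}(2)\subseteq E(\Gamma)$ (with complement $A:=r^{-1}(1)$), so the sum defining $R_2(\Gamma,q)$ is really a sum over subsets of edges, exactly matching the shape of the subgraph-expansion of the Tutte polynomial.

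First I would unwind $\delta(\Gamma,r)$ from~\eqref{m-defn} in the case $d=2$. The only term is $k=1$, giving $\delta(\Gamma,r)=b_1(\Gamma)-b_1(\Gamma_1)$, where $\Gamma_1$ is the graph obtained from $\Gamma$ by contracting the edges in $B$ (keeping the edges of $A$, possibly as loops or parallel edges). Since $\Gamma$ is connected, so is $\Gamma_1$, and
$$b_1(\Gamma)=|E(\Gamma)|-n+1,\qquad b_1(\Gamma_1)=|A|-c(B)+1,$$
where $c(B)$ denotes the number of connected components of the spanning subgraph $(V(\Gamma),B)$. Therefore
$$\delta(\Gamma,r)=|B|-n+c(B)=|B|-r_M(B),$$
where $r_M(B):=n-c(B)$ is the matroid rank of $B$ in the graphic matroid of $\Gamma$.

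Next I would compare with the Tutte polynomial. Recall
$$T(\Gamma;x,y)=\sum_{B\subseteq E(\Gamma)}(x-1)^{r_M(E)-r_M(B)}(y-1)^{|B|-r_M(B)}.$$
For a connected graph $r_M(E)=n-1$, so substituting $x=2$ and $y=q+1$ collapses the first factor to $1$ and turns the second into $q^{|B|-r_M(B)}$. Summing over $B\subseteq E(\Gamma)$ and identifying $B$ with $r^{-1}(2)$ as above gives
$$T(\Gamma;2,q+1)=\sum_{B\subseteq E(\Gamma)}q^{|B|-r_M(B)}=\sum_r q^{\delta(\Gamma,r)}=R_2(\Gamma,q),$$
as desired. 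There is no real obstacle here: the proof is a one-line matching between the $d=2$ specialization of~\eqref{R-defn} and the rank-nullity form of the Tutte polynomial; the only thing to check carefully is the interpretation of $\Gamma_1$ as a contraction of $\Gamma$ along $B$ (with loops and multi-edges retained), which is exactly how $b_1$ behaves in the formula above.
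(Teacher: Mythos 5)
Your proof is correct and is essentially the paper's argument: both reduce the identity to matching the rank--nullity (subgraph) expansion of the Tutte polynomial at $(x,y)=(2,q+1)$ against the subset expansion $R_2(\Gamma,q)=\sum_{B\subseteq E(\Gamma)}q^{b_1(\Gamma[B])}$. The only cosmetic difference is that the paper obtains this subset expansion by quoting the convolution identity $R_2=\psi(q)*\psi(1)$ from \eqref{R_d-recursion-1}, whereas you rederive it directly from \eqref{R-defn} and \eqref{m-defn} by identifying $r$ with $B=r^{-1}(2)$ and computing $\delta(\Gamma,r)=b_1(\Gamma[B])=|B|-r_M(B)$, which is a correct and equally short verification.
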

\begin{proof}
  This is a simple verification using that $R_2(q)=\psi(q)*\psi(1)$
  by~\eqref{R_d-recursion-1} and the standard expression of the Tutte
  polynomial as a sum over $A\subseteq E(\Gamma)$
  (e.g. see~\cite[(3.3)]{tutte-ref}).
\end{proof}
The higher invariants $R_d(\Gamma,q)$ for $d>2$ are not expressible in
terms of the Tutte polynomial as they do not satisfy in general the
contraction/deletion property. Indeed, this fails already for
$
\Gamma=             \begin{tikzpicture}[anchor=base, baseline] 
\draw[fill=black] (0,0) circle (.05);
\draw[fill=black] (.5,0) circle (.05);
\draw[thick] (0,0) to [bend right=45] (.5,0);
\draw[thick] (0,0) to [bend right=-45] (.5,0);
\addvmargin{1mm}
             \end{tikzpicture}.
$
 Take $e$ to be any of the edges. Then
$
\Gamma \backslash e =
             \begin{tikzpicture}[anchor=base, baseline] 
\draw[fill=black] (0,0) circle (.05);
\draw[fill=black] (.5,0) circle (.05);
\draw[thick] (0,0) -- (.5,0);
\addvmargin{1mm}
             \end{tikzpicture}
$
and 
$\Gamma/e =
             \begin{tikzpicture}[anchor=base, baseline] 
\draw[fill=black] (0,0) circle (.05);
\draw[thick] (0,0) arc [start angle=0, end angle=360, radius=0.3 cm];
\addvmargin{1mm}
\end{tikzpicture}.  $ We compute the difference of their rational
functions $R$ (the individual values appear in
Table~\ref{graph-table}) and get
\begin{align}
\begin{split}
R(\Gamma,q,T)-R(\Gamma\backslash e,q,T)-R(\Gamma/e,q,T)&=\frac{((q +
  2)T^2 - 1)T}{(1-T)^3} \\
&= -T + (2q + 1)T^3 + (2q^2 + 4q + 2)T^4 + O(T^5).
\end{split}
\end{align}
As expected the coefficient of $T^2$ vanishes but all the higher ones
are non-zero. To check this we may set say $q=1$ and verify that the
coefficient of $T^n$ is then $n(n-2)$ for all $n\geq 1$.
\end{remark}

\end{document}